\definecolor{lavender}{rgb}{0.75,0,0.8}
\definecolor{darkblue}{rgb}{0,0,0.85}
\newtheorem{theorem}{Theorem}[section]
\newtheorem{proposition}[theorem]{Proposition}
\newtheorem{question}[theorem]{Question}
\newtheorem{lemma}[theorem]{Lemma}
\theoremstyle{definition}
\newtheorem{definition}[theorem]{Definition}
\DeclarePairedDelimiter{\paren}{(}{)}
\DeclarePairedDelimiter{\set}{\{}{\}}
\DeclarePairedDelimiter{\abs}{\lvert}{\rvert}
\DeclarePairedDelimiter{\Abs}{\lVert}{\rVert}
\DeclarePairedDelimiter{\floor}{\lfloor}{\rfloor}
\DeclarePairedDelimiter{\ceil}{\lceil}{\rceil}
\DeclareMathOperator{\sgn}{sgn}
\let\Vec\mathbf
\newcommand{\perm}{\psi^{\mathrm{max}}}
\newcommand{\calf}{\mathcal{F}}
\newcommand{\eps}{\varepsilon}
\newcommand{\RR}{\mathbb{R}}
\newcommand{\VV}{\mathbf{V}}
\newcommand{\dfn}[1]{\textcolor{darkblue}{\emph{#1}}}
\begin{document}

\title{Ordering Candidates via Vantage Points} 

\author{Noga Alon}
\address{Department of Mathematics, Princeton University, Princeton, NJ 08544, USA and Schools of Mathematical Sciences and Computer Science, Tel Aviv University, Tel Aviv, Israel}
\email{nalon@math.princeton.edu}
\author{Colin Defant}
\address{Department of Mathematics, Harvard University, Cambridge, MA 02139, USA}
\email{colindefant@gmail.com}
\author{Noah Kravitz}
\address{Department of Mathematics, Princeton University, Princeton, NJ 08540, USA}
\email{nkravitz@princeton.edu}
\author{Daniel G. Zhu}
\address{Department of Mathematics, Massachusetts Institute of Technology, Cambridge, MA 02139, USA}
\email{zhd@princeton.edu}

\begin{abstract}
Given an $n$-element set $C\subseteq\RR^d$ and a (sufficiently generic) $k$-element multiset $V\subseteq\RR^d$, we can order the points in $C$ by ranking each point $c\in C$ according to the sum of the distances from $c$ to the points of $V$. Let $\Psi_k(C)$ denote the set of orderings of $C$ that can be obtained in this manner as $V$ varies, and let $\perm_{d,k}(n)$ be the maximum of $\lvert\Psi_k(C)\rvert$ as $C$ ranges over all $n$-element subsets of $\RR^d$. We prove that $\perm_{d,k}(n)=\Theta_{d,k}(n^{2dk})$ when $d \geq 2$ and that $\perm_{1,k}(n)=\Theta_k(n^{4\ceil{k/2}-1})$. As a step toward proving this result, we establish a bound on the number of sign patterns determined by a collection of functions that are sums of radicals of nonnegative polynomials; this can be understood as an analogue of a classical theorem of Warren. We also prove several results about the set $\Psi(C)=\bigcup_{k\geq 1}\Psi_k(C)$; this includes an exact description of $\Psi(C)$ when $d=1$ and when $C$ is the set of vertices of a vertex-transitive polytope. 
\end{abstract}

\maketitle

\section{Introduction}\label{sec:intro}

Let $d \geq 1$ and $n, k \geq 0$ be integers, and consider a set $C=\set{c_1,\ldots,c_n}$ of \dfn{candidate points} in $\RR^d$. Given a multiset $V=\set{v_1,\ldots,v_k}$ of $k$ \dfn{vantage points} in $\RR^d$,
define the function $D_V\colon \RR^d \to \RR$ by $D_V(x) = \sum_{i\in[k]} \Abs{x - v_i}$, where $\Abs{-}$ denotes the Euclidean distance and $[k]\coloneqq\set{1,\ldots,k}$. We say $V$ \dfn{distinguishes} the points in $C$ if the values $D_V(c_1),\ldots,D_V(c_n)$ are distinct. If $V$ distinguishes the points in $C$, then there is a unique permutation $\sigma$ of $[n]$ such that $D_V(c_{\sigma(1)})<\cdots<D_V(c_{\sigma(n)})$; in this case, we say $V$ \dfn{witnesses} the tuple $(c_{\sigma(1)},\ldots,c_{\sigma(n)})$, and we denote this tuple by $\Sigma_V^C$. Throughout this paper, we will identify this tuple with the function $\Sigma_V^C \colon [n] \to C$ that sends $i$ to $c_{\sigma(i)}$, as these two objects clearly contain equivalent information.

Let $\Psi_k(C)$ be the set of tuples $\Sigma_V^C$ witnessed by $k$-element multisets of $\RR^d$ that distinguish the points in $C$, and let $\psi_k(C)=\abs{\Psi_k(C)}$. In other words, $\psi_k(C)$ counts the possible rankings of $C$, where the ranking of a point is determined by the sum of its distances from $k$ vantage points.

The quantity $\psi_1(C)$ was first studied by Good and Tideman \cite{Good1977}, who viewed the points in $C$ as political candidates and the single vantage point $v_1$ as a voter who ranks the candidates based on how far away they are in the Euclidean metric. They proved that \[\psi_1(C) \leq s(n,n)+s(n,n-1)+\cdots+s(n,n-d)\] for every set $C$, where $s(n,r)$ denotes an unsigned Stirling number of the first kind; moreover, they showed that this upper bound is tight. Zaslavsky \cite{Zaslavsky2002} provided a different proof of this inequality using hyperplane arrangements. Carbonero, Castellano, Gordon, Kulick, Ohlinger, and Schmitz \cite{Carbonero2021} continued this line of work by showing that the minimum possible value of $\psi_1(C)$ is $2n-2$; this minimum is independent of the dimension $d$ because it is attained when the points in $C$ are arranged on a line. They also constructed additional point configurations $C$ for which $\psi_1(C)$ attains other values, and they initiated the investigation of $\psi_k(C)$ for larger values of $k$ (with a focus on the case $k=2$). 

Let $\perm_{d,k}(n)$ be the maximum value of $\psi_k(C)$ as $C$ ranges over all $n$-element subsets of $\RR^d$. One of our main results is the following theorem. 
The $k=2$ case asymptotically settles a problem raised in \cite{Carbonero2021}.

\begin{theorem}\label{thm:main}
If $d \geq 2$ and $k \geq 1$ are fixed, then $\perm_{d,k}(n) = \Theta_{d, k}(n^{2dk})$.  If $d=1$ and $k \geq 1$ is fixed, then $\perm_{1,k}(n)=\Theta_k(n^{4\ceil{k/2}-2})$.
\end{theorem}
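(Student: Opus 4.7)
The plan is to translate the counting problem into a sign-pattern count and then invoke a sharp Warren-type theorem. For each pair $i\neq j$ in $[n]$ define
\[
f_{ij}(V) \;=\; D_V(c_i)-D_V(c_j) \;=\; \sum_{\ell=1}^{k}\bigl(\Abs{c_i-v_\ell}-\Abs{c_j-v_\ell}\bigr),
\]
viewed as a function on the parameter space $(\RR^d)^k\cong\RR^{dk}$. Whenever $V$ distinguishes $C$, the ordering $\Sigma_V^C$ is read off from the sign vector $(\sgn f_{ij}(V))_{i<j}$. Thus $\psi_k(C)$ is bounded above by the number of sign patterns realized by the $\binom{n}{2}$ functions $f_{ij}$; each of these is a signed sum of $2k$ square roots of nonnegative quadratic polynomials in the $dk$ coordinates of $V$.

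For the upper bound when $d\geq 2$, I would feed this collection into the sum-of-radicals analogue of Warren's theorem promised in the abstract. With $N=\binom{n}{2}$ functions in $m=dk$ variables, its output is a bound of shape $O_{d,k}(N^{m})=O_{d,k}(n^{2dk})$. For $d=1$ the ambient space $\RR^k$ is small enough that a direct approach is preferable. The convex piecewise linear function $x\mapsto D_V(x)$ has breakpoints exactly at $\{v_1,\dots,v_k\}$ and slopes in $\{-k,-k+2,\dots,k\}$. When $k$ is odd all these slopes are nonzero, so stratifying $\RR^k$ by the interleaving of the $v_\ell$ with the $c_i$ and using the classical linear hyperplane-arrangement bound on each stratum gives $O_k(n^{2k})$. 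When $k$ is even, the middle slope vanishes, producing a flat interval; the distinguishing condition forces at most one $c_i$ to lie there, which couples the two middle vantage points into a single effective parameter and drops the exponent by two, yielding $O_k(n^{2k-2})$. In both cases the exponent equals $4\ceil{k/2}-2$.

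For the lower bound in $d\geq 2$, I would take $n$ points in highly general position---for instance sampled generically or placed on the moment curve---so that the walls $\{f_{ij}=0\}$ form a real-analytic arrangement in $\RR^{dk}$ whose local geometry is nondegenerate. The matching lower-bound companion of the sum-of-radicals Warren estimate (which is tight on generic arrangements) then gives $\Omega_{d,k}(n^{2dk})$ rankings. For $d=1$ I would instead write down an explicit configuration of the $c_i$ (for instance at well-separated values like $c_i=2^i$) making the piecewise linear structure of $D_V$ as rich as possible, so that the $2\ceil{k/2}-1$ effective scalar parameters of $V$ can each be varied to produce $\Omega_k(n^2)$ new orderings; multiplying gives the desired $\Omega_k(n^{4\ceil{k/2}-2})$.

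The main obstacle is the matching lower bound: the upper bound is essentially bookkeeping on top of the sign-pattern theorem already in the paper, but matching it requires exhibiting concrete $C$ for which the radical-sum walls $f_{ij}=0$ cut $\RR^{dk}$ into as many cells as possible. In the $d=1$ case this is further complicated by having to exploit the flat-middle phenomenon simultaneously in the upper bound (as a saving) and in the lower-bound construction (as a structural feature), while ruling out algebraic coincidences that would cost another factor of $n$.
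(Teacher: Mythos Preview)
Your upper-bound plan is essentially the paper's: encode orderings by the proper sign patterns of the $\binom{n}{2}$ functions $f_{ij}$ and apply the sum-of-radicals Warren analogue (\cref{thm:sign-patterns-fractional}) with $N=dk$ variables. For $d=1$ and even $k$ your flat-middle observation is exactly the paper's trick---replace $v_{k/2},v_{k/2+1}$ by two copies of their midpoint, which preserves $\Sigma_V^C$ and collapses one real parameter---after which the same Warren bound gives $O_k(n^{2(k-1)})$. (Your proposed ``direct stratification'' for odd $k$ is unnecessary; the Warren bound already gives $O_k(n^{2k})$ there.)

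The genuine gap is the lower bound. There is no ``matching lower-bound companion of the sum-of-radicals Warren estimate'' to invoke: the paper proves no such tightness result, and none is standard. Indeed, \cref{prop:noga} and the discussion after it show that even for a single sum of radicals the number of sign changes is poorly understood, with a linear--exponential gap; so a black-box ``generic arrangement is tight'' statement is not available. The paper's lower bound is instead a bespoke inductive construction on $k$ (step $k\mapsto k+2$), occupying \cref{sec:lower,sec:lowerd1,sec:lowerdg2}. One places $C'$ (handled by induction) between two far-away ``flanking'' sets $C_1,C_2$ at a much larger scale, puts two new vantage points $u_1,u_2$ near $C_1,C_2$, and shows via asymptotic expansions like $\sqrt{R^2+aR}-R\approx a/2$ that the ordering decouples into the old ordering of $C'$ times an ordering of $C_1\cup C_2$ governed by explicit ``effective distance functions'' $\hat D^1_{k,\hat U},\hat D^2_{k,\hat U}$. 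For $d=1$ these are piecewise linear and one checks $\hat\psi_k\geq m^4$ by hand (\cref{lem:biexistd1}); for $d\geq 2$ a further rescaling reduces to functions $\check D^1,\check D^2$ and a careful analysis of $\vartheta_{a,b}(x)=\vartheta_a(x)/\vartheta_b(x)$ combined with a genericity argument (\cref{lem:denseopen,lem:gdd}) yields $\check\psi(C,C)\geq\binom{|C|}{2}^2\binom{\psi_1(C)}{2}=\Omega_d(m^{4d})$.

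Your final paragraph correctly identifies this as the obstacle, but ``generic $C$ plus tight Warren'' is not a proof; what is missing is precisely the multi-scale construction and the chain of limiting reductions that make the resulting cell count computable.
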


Given a tuple $\calf=(f_1, \ldots, f_m)$ of real-valued functions on $\RR^N$ and a point $x \in \RR^N$, we obtain the \dfn{sign pattern} $\eps_{\calf}(x)=(\eps_1, \ldots,\eps_m)$, where $\eps_i=\sgn(f_i(x))\in\set{0,1,-1}$.  A sign pattern is \dfn{proper} if its entries are all nonzero. When $\calf$ is a tuple of polynomials of bounded degree, a classical result of Warren \cite{Warren1968} provides an upper bound for the number of distinct proper sign patterns of the form $\eps_{\calf}(x)$ as $x$ varies over $\RR^d$. In \cref{sec:sign_patterns}, we prove an analogue of Warren's theorem which may be of independent interest. This theorem (\cref{thm:sign-patterns-fractional}) bounds the number of proper sign pattern of the form $\eps_{\calf}(x)$ when the functions in $\calf$ are sums of radicals of nonnegative polynomial functions. We then apply this theorem in \cref{sec:upper} to prove the upper bounds $\perm_{d,k}(n)=O_{d,k}(n^{2dk})$ and $\perm_{1,k}(n)=O_k(n^{4\ceil{k/2}-2})$ in \cref{thm:main}. \cref{sec:lower,sec:lowerd1,sec:lowerdg2} are devoted to proving the lower bounds $\perm_{d,k}(n)=\Omega_{d,k}(n^{2dk})$ and $\perm_{1,k}(n)=\Omega_k(n^{4\ceil{k/2}-2})$ in \cref{thm:main}. Our constructions are delicate and technical and proceed by induction on the number of vantage points. 

In \cref{sec:unlimited}, we turn our attention to the set $\Psi(C)=\bigcup_{k\geq 1}\Psi_k(C)$. This is the collection of orderings of $C$ that are witnessed by arbitrarily large (finite) multisets that distinguish the points in $C$. We say a tuple $(x_1,\ldots,x_m)$ of points in $\RR^d$ is \dfn{protrusive} if for every $i\in[m-1]$, the point $x_{i+1}$ is not in the convex hull of $x_1,\ldots,x_i$. A simple argument involving the triangle inequality shows that every tuple in $\Psi(C)$ is protrusive. We show that $\Psi(C)$ is exactly equal to the set of protrusive orderings of the points in $C$ when $d=1$ (\cref{thm:protrusive_d_1}), when $n\leq 4$ (\cref{thm:4-points}), and when $C$ is the set of vertices of a vertex-transitive polytope (\cref{thm:polytope}). In a different direction, we construct a $6$-element set $C\subseteq\RR^2$ such that some protrusive orderings of $C$ are not in $\Psi(C)$. We leave open the problem of determining whether a similar construction exists with only $5$ points.

\section{Sign patterns of sums of radicals}\label{sec:sign_patterns}

It is natural to try to estimate the number of proper sign patterns arising from a tuple $\calf=(f_1, \ldots, f_m)$ of real-valued functions on $\RR^N$. A classical result of Warren gives an upper bound for the case where $f_1, \ldots, f_m$ are polynomials.

\begin{theorem}[{\cite[Theorem~3]{Warren1968}}]\label{thm:warren}
Let $N,m,\Delta$ be positive integers, and let $\calf=(f_1, \ldots, f_m)$, where each $f_i$ is a polynomial in $\RR[x_1,\ldots, x_N]$ of degree at most $\Delta$. Then the number of distinct proper sign patterns of the form $\eps_{\calf}(x)$ for $x\in\RR^N$ is at most
$2(2\Delta)^N\sum_{\ell=0}^N 2^\ell \binom{m}{\ell}$.
\end{theorem}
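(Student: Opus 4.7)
The plan is to bound the number of proper sign patterns via a Morse-theoretic counting argument in the tradition of Oleinik--Petrovsky--Thom--Milnor. Any proper sign pattern $\eps$ corresponds to a nonempty open set $U_\eps\subseteq\RR^N\setminus Z$, where $Z=\{x:f_1(x)\cdots f_m(x)=0\}$, and distinct sign patterns give disjoint open sets. So it suffices to bound the number of connected components of $\RR^N\setminus Z$.

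First, by a standard perturbation argument (replace each $f_i$ by $f_i+\eta g_i$ for generic polynomials $g_i$ and small $\eta>0$), I reduce to the case where the $f_i$ are in general position: for every $S\subseteq[m]$ with $\abs{S}\leq N$, the variety $V_S=\bigcap_{i\in S}\{f_i=0\}$ is a smooth submanifold of codimension $\abs{S}$, all such intersections are transverse, and $V_S=\emptyset$ whenever $\abs{S}>N$. The sign-pattern count cannot decrease under such a perturbation, so this reduction is safe.

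Next, fix a generic $x_0\in\RR^N$ and consider the proper Morse function $h(x)=\Abs{x-x_0}^2$. For each bounded connected component $R$ of $\RR^N\setminus Z$, the function $h$ attains its minimum on $\overline{R}$ at some point $p\in\partial R$ (generically $x_0\notin R$). Let $S=\{i\in[m]:f_i(p)=0\}$; by the general-position hypothesis, $p$ must be a critical point of the restriction $h|_{V_S}$. This criticality translates into a polynomial system on $\RR^N$: the $\abs{S}$ equations $f_i=0$ of degree $\Delta$, together with $N-\abs{S}$ determinantal equations (of degree at most $\Delta$) asserting that $\nabla h$ lies in the span of $\{\nabla f_i\}_{i\in S}$. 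A Bezout-type bound gives at most $(2\Delta)^N$ solutions to this system for each fixed $S$. Moreover, each critical point $p$ can serve as the $h$-minimum of at most $2^{\abs{S}}$ distinct regions, one for each choice of sign of $f_i$ transverse to $V_S$ at $p$ for $i\in S$.

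Summing over subsets $S$ with $\abs{S}\leq N$ yields at most $(2\Delta)^N\sum_{\ell=0}^N 2^\ell\binom{m}{\ell}$ bounded regions, and an extra factor of $2$ absorbs the unbounded components (handled by restricting to a large ball and applying the same argument to the compactified problem). The main technical obstacle is the Bezout step: the determinantal criticality equations derived from Jacobian minors do not have degrees that fit the stated bound cleanly, so some massaging is needed---either by reformulating the critical-point conditions as a single augmented polynomial system of uniform degree, or by invoking a refined intersection-theoretic bound---to obtain the uniform constant $(2\Delta)^N$ across all subsets $S$ and thereby recover exactly the form of Warren's inequality.
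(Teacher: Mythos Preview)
The paper does not prove this theorem at all: it is quoted verbatim from Warren's original paper and used as a black box. The only related content is the statement of \cref{lem:warren} (also quoted from Warren, without proof), which bounds the number of connected components of $\RR^N\setminus\bigcup_i\VV(f_i)$; the passage from \cref{lem:warren} to \cref{thm:warren} is the one-line observation that a proper sign pattern is locally constant and hence constant on each such component. So there is no ``paper's proof'' to compare against beyond this trivial reduction, and the honest comparison is that the paper simply cites the result while you are trying to reprove it from scratch.

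Your proposal attempts what the paper deliberately avoids: establishing Warren's component bound via a stratified Morse/Oleinik--Petrovsky--Milnor--Thom argument. The overall architecture---perturb to general position, choose a generic distance-squared function $h$, associate to each bounded region a critical point of $h$ on some stratum $V_S$, then bound critical points by B\'ezout---is indeed the classical route to bounds of this shape, and it does recover the correct $m$-dependence $\sum_{\ell\le N}2^\ell\binom{m}{\ell}$. But the gap you yourself flag is real and not cosmetic: on a stratum $V_S$ with $\abs{S}=\ell$, the Jacobian-minor conditions expressing ``$\nabla h$ lies in the span of the $\nabla f_i$'' have degree roughly $1+\ell(\Delta-1)$ each, and the na\"ive B\'ezout product $\Delta^{\ell}\bigl(1+\ell(\Delta-1)\bigr)^{N-\ell}$ does not simplify to $(2\Delta)^N$ uniformly in $\ell$; a Lagrange-multiplier reformulation runs into the same issue in a different guise. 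Warren's own argument is organized differently (working essentially with a single product polynomial and an inductive estimate rather than a stratum-by-stratum critical-point count), and that reorganization is exactly what yields the clean constant $2(2\Delta)^N$. As written, your sketch gives a bound of the correct order $O_{N,\Delta}(m^N)$ but not the stated constant, and the closing sentence (``some massaging is needed'') is standing in for an entire missing argument. Since the paper only needs the $O_{N,\Delta}(m^N)$ conclusion anyway, your sketch would suffice for the paper's purposes---but it is not a proof of the theorem as stated.
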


This theorem has many combinatorial applications (see, e.g., \cite{Alon1995} for some early applications).  We will prove an analogue of Warren's theorem for functions that are sums of radicals of nonnegative polynomials. 

\begin{theorem}\label{thm:sign-patterns-fractional}
Let $N,m,\Delta, r,s$ be positive integers with $r \geq 2$, and let $\calf=(f_1, \ldots, f_m)$, where each $f_i$ is of the form
$f_i=\sum_{j=1}^{r_i} a_{i,j} g_{i,j}^{1/s}$
with $r_i \leq r$ a positive integer, each $a_{i,j}$ a real number, and each $g_{i,j}$ a polynomial in $\RR[x_1, \ldots, x_N]$ of degree at most $\Delta$ such that $g_{i,j}(x) \geq 0$ for all $x \in \RR^N$. Then the number of distinct proper sign patterns of the form $\eps_{\calf}(x)$ for $x\in\RR^N$ is at most
$2(2s^{r-2}\Delta)^N\sum_{\ell=0}^N 2^\ell \binom{m}{\ell}$.
\end{theorem}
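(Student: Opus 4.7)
The plan is to reduce to Warren's theorem (\cref{thm:warren}) by replacing each $f_i$ with a real polynomial $P_i\in\RR[x_1,\ldots,x_N]$ of degree at most $s^{r-2}\Delta$ whose zero set contains that of $f_i$. For $r_i = 1$, I will just take $P_i = g_{i,1}$, which has degree at most $\Delta \leq s^{r-2}\Delta$ (using $r\geq 2$, $s\geq 1$); since $g_{i,1}\geq 0$, the function $f_i = a_{i,1}g_{i,1}^{1/s}$ vanishes exactly where $g_{i,1}$ does. For $r_i\geq 2$, I let $\omega$ be a primitive $s$-th root of unity and form the ``Galois product''
\[
P_i(x) = \prod_{\tau\in(\mathbb{Z}/s\mathbb{Z})^{r_i-1}}\paren*{a_{i,1}g_{i,1}(x)^{1/s} + \sum_{j=2}^{r_i}\omega^{\tau_j}a_{i,j}g_{i,j}(x)^{1/s}}.
\]
The factor indexed by $\tau = (0,\ldots,0)$ equals $f_i$, so $\set{f_i = 0}\subseteq\set{P_i = 0}$ is immediate.

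Verifying that $P_i$ is a real polynomial in $x$ of degree at most $s^{r_i-2}\Delta$ is the heart of the argument. Substituting $g_{i,j}^{1/s}\mapsto\omega^{k_j}g_{i,j}^{1/s}$ for any fixed $j$ and $k_j\in\mathbb{Z}/s\mathbb{Z}$ preserves $P_i$: for $j\geq 2$ it permutes the factors via the index shift $\tau_j\mapsto\tau_j+k_j$, while for $j=1$ it sends each $f_i^\tau$ to $\omega^{k_1}$ times an appropriately shifted factor, contributing an overall phase of $\omega^{k_1 s^{r_i-1}}=1$ (precisely where the hypothesis $r_i\geq 2$ enters). Hence $P_i$ is invariant under the full $(\mathbb{Z}/s\mathbb{Z})^{r_i}$-action on the radicals and thus lies in $\RR[\omega][g_{i,1},\ldots,g_{i,r_i}]$; reality follows since $\overline{f_i^\tau}=f_i^{-\tau}$ and $\tau\mapsto-\tau$ permutes the index set. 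The degree bound comes from a growth estimate: each factor has magnitude $O(\abs{x}^{\Delta/s})$ for large $\abs{x}$, so the product of $s^{r_i-1}$ such factors is $O(\abs{x}^{s^{r_i-2}\Delta})$, forcing $\deg P_i\leq s^{r_i-2}\Delta\leq s^{r-2}\Delta$.

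To close, the sign pattern of $(f_1,\ldots,f_m)$ is locally constant on $\RR^N\setminus\bigcup_i\set{f_i = 0}$, so the number of proper sign patterns is at most the number of connected components of this open set. Since $\bigcup_i\set{f_i=0}\subseteq\bigcup_i\set{P_i=0}$, removing the (possibly larger) set on the right can only split components further, so the count is at most the number of connected components of $\RR^N\setminus\bigcup_i\set{P_i=0}$. The latter is controlled by the bound behind \cref{thm:warren} (Warren's original argument in fact bounds connected components by the very same formula) applied to the polynomials $P_1,\ldots,P_m$ of degree at most $s^{r-2}\Delta$, giving the target bound. I expect the main obstacle to be the Galois-invariance-plus-degree computation: one must carefully track the cyclotomic phases to confirm that the product of complex factors collapses to a real polynomial of the right degree.
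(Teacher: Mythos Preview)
Your approach—form the Galois product $P_i$ to get a polynomial whose zero set contains $\VV(f_i)$, then invoke Warren's component bound (the paper's \cref{lem:warren})—is exactly the paper's strategy, and your invariance argument is essentially the content of \cref{lem:galois}. The degree bound and the reality of $P_i$ are fine.

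The gap is that you never check $P_i\not\equiv 0$, and this can genuinely fail. Take $N=1$, $s=2$, $r_i=3$, $g_1=(x^2+1)^2$, $g_2=x^4$, $g_3=1$, $(a_1,a_2,a_3)=(1,1,-1)$. Then $f_i=(x^2+1)+x^2-1=2x^2\not\equiv 0$, but the factor with $(\tau_2,\tau_3)=(1,0)$ is $(x^2+1)-x^2-1\equiv 0$, so $P_i\equiv 0$. Once some $P_i\equiv 0$, your last step collapses: the assertion ``removing the larger set can only split components further'' needs that no component of $\RR^N\setminus\bigcup_i\VV(f_i)$ is entirely contained in $\bigcup_i\VV(P_i)$, which is automatic only when that union is nowhere dense, i.e., when every $P_i$ is a nonzero polynomial.

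The paper repairs this with two extra moves you are missing. First it replaces each $g_{i,j}$ by $g_{i,j}+\delta$ for small $\delta>0$, which cannot decrease the number of proper sign patterns and makes every $g_{i,j}^{1/s}$ real-analytic and nowhere vanishing. Second, before forming the product, it rewrites $f_i$ in terms of a \emph{minimal} (hence linearly independent) subset of the $g_{i,j}^{1/s}$ with all nonzero coefficients; then every Galois-conjugate factor is a nontrivial $\mathbb{C}$-linear combination of linearly independent analytic functions, hence a nonzero analytic function, and the product $\tilde f_i$ of finitely many nonzero analytic functions is nonzero. In the example above this step detects the relation $g_1^{1/2}=g_2^{1/2}+g_3^{1/2}$ and rewrites $f_i=2g_2^{1/2}$ before multiplying. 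Without these two steps the nonvanishing of $P_i$—and hence the final counting—is not justified.
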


Warren deduced \cref{thm:warren} from a topological statement about the connected components of the complement of a real algebraic variety.  Our proof of \cref{thm:sign-patterns-fractional} will follow the same strategy, and we will use Warren's topological statement as a black box.  For a function $p\colon\RR^N \to \RR$, let $\VV(p)\coloneqq\set{x \in \RR^N: p(x)=0}$ denote its zero set.

\begin{lemma}[{\cite[Theorem~2]{Warren1968}}]\label{lem:warren}
Let $N,m,\Delta$ be positive integers, and let $f_1, \ldots, f_m \in \RR[x_1,\ldots, x_N]$ be polynomials of degree at most $\Delta$. Then the set $\RR^N \setminus \bigcup_{i=1}^m \VV(f_i)$ has at most 
$2(2\Delta)^N\sum_{\ell=0}^N 2^\ell \binom{m}{\ell}$
connected components.
\end{lemma}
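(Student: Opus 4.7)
My plan is to prove \cref{lem:warren} via the classical Oleinik--Petrovsky--Milnor--Thom approach: bound the connected components of a semialgebraic set by counting the critical points of a carefully chosen Morse function, and then apply Bezout-type estimates. The argument has three main steps: (i) a perturbation step placing the $f_i$ into general position, (ii) a Morse-theoretic step witnessing each component of the complement by a critical point of a distance-squared function, and (iii) a Bezout-style summation over the resulting stratification.

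In the perturbation step, I would perturb the $f_i$ to $\tilde f_i = f_i + \eps_i$, where the $\eps_i$ are generic polynomials of degree $\leq \Delta$ with small coefficients. For sufficiently generic perturbations, the number of connected components of $\RR^N \setminus \bigcup_i \VV(\tilde f_i)$ is at least the original count (any original component contains an open ball that persists, while singular intersections may resolve to \emph{additional} components), and the $\tilde f_i$ may be taken in general position: for every $S \subseteq [m]$ with $|S| = k \leq N$ the intersection $\bigcap_{i \in S} \VV(\tilde f_i)$ is a smooth real subvariety of pure codimension $k$, and $\bigcap_{i \in S} \VV(\tilde f_i) = \emptyset$ for $|S| > N$.

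For the Morse step, I fix a generic point $a \in \RR^N$ and set $\phi(x) = \Abs{x - a}^2$. On each bounded connected component $U$ of the complement, $\phi$ attains its minimum on $\overline{U}$ at some point $p_U$. Either $p_U = a$ is the unique interior critical point of $\phi$ (accounting for at most one $U$), or $p_U$ lies on the boundary of $U$ and is a critical point of $\phi$ restricted to the smooth stratum $\bigcap_{i \in S(p_U)} \VV(\tilde f_i)$, where $S(p_U) = \set{i : \tilde f_i(p_U) = 0}$ and $|S(p_U)| \leq N$ by genericity. Applying Bezout's theorem to the Lagrange-multiplier system on a codimension-$k$ stratum yields a finite bound on the number of such critical points, and a local analysis at $p_U$ shows that each boundary critical point can witness at most $2^k$ distinct adjoining components of the complement (one per local sign pattern of $(\tilde f_i)_{i \in S(p_U)}$).

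Summing $\binom{m}{k} \cdot 2^k \cdot (\text{per-stratum Bezout bound})$ over $0 \leq k \leq N$, and including an outer factor of $2$ to absorb the unbounded components (handled by intersecting with a sufficiently large ball or by compactifying $\RR^N$ to $S^N$), yields the claimed bound $2(2\Delta)^N \sum_{\ell=0}^N 2^\ell \binom{m}{\ell}$. The main technical obstacle will be making the Bezout count precise: extracting exactly $(2\Delta)^N$ as the per-stratum bound on critical points of the quadratic $\phi$ on a codimension-$k$ smooth complete intersection cut out by polynomials of degree $\leq \Delta$, and verifying the $2^k$ local multiplicity at each boundary critical point. These are standard but intricate computations in real algebraic geometry, and once they are nailed down the final bound follows by routine summation of the resulting geometric series.
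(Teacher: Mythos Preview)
The paper does not prove \cref{lem:warren}; it is quoted as a black-box citation from Warren's 1968 paper (see the sentence just before the lemma: ``we will use Warren's topological statement as a black box''). So there is no in-paper proof to compare your proposal against.

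For what it is worth, your outline is broadly the Oleinik--Petrovsky--Milnor--Thom strategy that Warren's original argument follows, so you are on the right track if you want to reconstruct the proof. As written, though, it is a sketch with several nontrivial gaps you would need to fill: the perturbation step (small perturbations can \emph{merge} components, not only split them, so the monotonicity claim needs a more careful argument); the precise justification that a boundary critical point on a codimension-$k$ stratum is adjacent to at most $2^k$ components of the complement; and the exact degree count in the Lagrange-multiplier system that yields $(2\Delta)^N$ rather than a larger constant. You acknowledge these as ``standard but intricate,'' which is fair, but if the goal were a self-contained proof you would need to actually carry them out. Since the paper simply cites the result, no such work is required here.
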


We will also require the following basic fact about products of ``Galois conjugates.''

\begin{lemma}\label{lem:galois}
Let $r,s \geq 2$ be integers, let $\omega=e^{2\pi i/s}$, and let $\xi_1,\ldots,\xi_r$ be variables.  Then
\[\prod_{0 \leq t_2,\ldots, t_r \leq s-1} (\xi_1+\omega^{t_2} \xi_2+\cdots+\omega^{t_r}\xi_r)\] is a polynomial in $\xi_1^s, \ldots, \xi_r^s$.
\end{lemma}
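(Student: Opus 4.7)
My plan is to show that the displayed product $P(\xi_1,\ldots,\xi_r)$ is invariant under each substitution $\xi_i \mapsto \omega \xi_i$ (for $i = 1, \ldots, r$); standard monomial-by-monomial bookkeeping then forces every exponent appearing in $P$ to be divisible by $s$, which is precisely the claim.

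First I would handle $i \geq 2$. The substitution $\xi_j \mapsto \omega\xi_j$ sends a factor indexed by $(t_2,\ldots,t_r)$ to the factor indexed by $(t_2,\ldots,t_{j-1},t_j+1,t_{j+1},\ldots,t_r)$, where the shift is interpreted modulo $s$. Since $t_j$ ranges over $\mathbb{Z}/s\mathbb{Z}$ and the other indices are untouched, this substitution merely permutes the $s^{r-1}$ factors, leaving $P$ unchanged.

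Next, I would handle $i = 1$. Substituting $\xi_1 \mapsto \omega\xi_1$ multiplies every factor by $\omega$ after factoring out: more precisely, $\omega\xi_1 + \omega^{t_2}\xi_2 + \cdots + \omega^{t_r}\xi_r = \omega(\xi_1 + \omega^{t_2-1}\xi_2 + \cdots + \omega^{t_r-1}\xi_r)$. Hence this substitution sends $P$ to $\omega^{s^{r-1}} P$, and the hypothesis $r \geq 2$ ensures $s \mid s^{r-1}$, so $\omega^{s^{r-1}} = 1$.

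Finally, writing $P = \sum_\alpha c_\alpha \xi_1^{\alpha_1}\cdots \xi_r^{\alpha_r}$, the invariance under $\xi_i \mapsto \omega \xi_i$ yields $c_\alpha = \omega^{\alpha_i} c_\alpha$ for every $\alpha$ and every $i$, so any monomial with $c_\alpha \neq 0$ must satisfy $s \mid \alpha_i$ for all $i$. This is exactly the statement that $P$ is a polynomial in $\xi_1^s,\ldots,\xi_r^s$. There is no real obstacle here; the only subtlety is checking that the $r \geq 2$ hypothesis is used correctly in step two to guarantee $\omega^{s^{r-1}} = 1$, which would fail if $r = 1$ (and indeed the $r=1$ product $\prod_{\emptyset}\xi_1 = \xi_1$ is not a polynomial in $\xi_1^s$).
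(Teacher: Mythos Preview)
Your proof is correct and follows essentially the same approach as the paper: both establish invariance of the product under each substitution $\xi_j \mapsto \omega \xi_j$ (handling $j=1$ via factoring out $\omega$ and using $\omega^{s^{r-1}}=1$, and $j\geq 2$ via reindexing the $t_j$), then read off divisibility of all exponents by $s$ from the monomial expansion. Your added remark on why the hypothesis $r\geq 2$ is necessary is a nice touch not present in the paper's proof.
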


\begin{proof}
Let
\[X(\xi_1, \ldots, \xi_r)=\prod_{0 \leq t_2,\ldots, t_r \leq s-1} (\xi_1+\omega^{t_2} \xi_2+\cdots+\omega^{t_r}\xi_r).\]
We claim that for each $1 \leq j \leq r$, we have
\[X(\xi_1, \ldots, \xi_r)=X(\xi_1, \ldots, \xi_{j-1}, \omega \xi_j, \xi_{j+1}, \ldots, \xi_r).\]
Indeed, for $j=1$, we can write \[\omega\xi_1+\omega^{t_2} \xi_2+\cdots+\omega^{t_r}\xi_r=\omega(\xi_1+\omega^{t_2-1} \xi_2+\cdots+\omega^{t_r-1}\xi_r).\]
So
\begin{align*}
X(\omega\xi_1, \xi_2, \ldots, \xi_r) &=\omega^{s^{r-1}} \prod_{-1 \leq t_2,\ldots, t_r \leq s-2} (\xi_1+\omega^{t_2} \xi_2+\cdots+\omega^{t_r}\xi_r)\\
 &=\prod_{0 \leq t_2,\ldots, t_r \leq s-1} (\xi_1+\omega^{t_2} \xi_2+\cdots+\omega^{t_r}\xi_r)=X(\xi_1, \ldots, \xi_r), 
\end{align*}
where in the second equality we used the fact that $\omega^{-1}=\omega^{s-1}$.
When $j>1$, we may assume that $j=r$ and write
\[\xi_1+\omega^{t_2} \xi_2+\cdots+\omega^{t_r}(\omega\xi_r)=\xi_1+\omega^{t_2} \xi_2+\cdots+\omega^{t_r+1}\xi_r;\]
we then conclude as in the $j=1$ case, and this establishes the claim.

For $\alpha=(\alpha_1, \ldots, \alpha_r) \in \mathbb{Z}_{\geq 0}^r$, write $\xi^\alpha\coloneqq\xi_1^{\alpha_1} \cdots \xi_r^{\alpha_r}$.  Then there are unique constants $b_\alpha$ (with $\alpha$ satisfying $\alpha_1+\cdots+\alpha_r=s^{r-1}$) such that
\[X(\xi_1, \ldots, \xi_r)=\sum_{\alpha} b_\alpha \xi^\alpha.\]
Suppose $\alpha=(\alpha_1, \ldots, \alpha_r)$ is such that $\alpha_i$ is not a multiple of $s$.  Then the coefficient of $\xi^\alpha$ in $X(\xi_1, \ldots, \xi_{j-1}, \omega \xi_j, \xi_{j+1}, \ldots, \xi_r)$ is $b_\alpha \omega^{\alpha_i}$.  But, by the claim, this also equals $b_\alpha$.  Since $\omega^{\alpha_i} \neq 1$, we conclude that $b_\alpha=0$.
\end{proof}

We can now prove \cref{thm:sign-patterns-fractional}.

\begin{proof}[Proof of \cref{thm:sign-patterns-fractional}]
Given $x\in\RR^N$ and a small positive real number $\delta$, let \[f_{i,\delta}(x)=\sum_{j=1}^{r_i}a_{i,j}(g_{i,j}(x)+\delta)^{1/s};\] that is, $f_{i,\delta}$ is obtained by replacing each $g_{i,j}$ with $g_{i,j} + \delta$ in the definition of $f_i$. Let $\calf_\delta=(f_{1,\delta},\ldots,f_{m,\delta})$. For each fixed $x \in \RR^N$, the quantity $f_{i,\delta}(x)$ is continuous in $\delta$. Therefore, if $\eps_\calf(x)$ is proper and $\delta$ is sufficiently small, then $\eps_\calf(x)=\eps_{\calf_\delta}(x)$. It follows that if $\delta$ is sufficiently small, then the number of proper sign patterns cannot decrease when we replace $\calf$ with $\calf_\delta$. This shows that it suffices to prove the theorem when $g_{i,j}(x) > 0$ for all $x \in \RR^N$; we will henceforth assume this.

If some $f_i$ is the zero function, then no proper sign patterns are achieved, so the result is obvious. Otherwise, we claim that for each $i$, we can find a nonzero polynomial $\tilde f_i \in \RR[x_1,\ldots,x_N]$ of degree at most $s^{r-2} \Delta$ such that $\VV(f_i) \subseteq \VV(\tilde f_i)$. To see how this claim finishes the proof, note that by applying \cref{lem:warren}, we find that $\RR^N \setminus \bigcup_{i=1}^m \VV(\tilde f_i)$ has at most 
$2(2s^{r-2}\Delta)^N\sum_{k=0}^N 2^k \binom{m}{k}$
connected components. Since the sign pattern $\eps_\calf(x)$ must be constant on every such connected component, we conclude that $\eps_\calf(x)$ assumes at most $2(2s^{r-2}\Delta)^N\sum_{k=0}^N 2^k \binom{m}{k}$ proper values as $x$ varies over $\RR^N \setminus \bigcup_{i=1}^m \VV(\tilde f_i)$. For each $\eps\in\set{1,-1}^m$, the set $\set{x : \eps_\calf(x) = \eps}$ is open, so it cannot be contained in the nowhere-dense set $\bigcup_{i=1}^m \VV(\tilde f_i)$. Hence, there are at most $2(2s^{r-2}\Delta)^N\sum_{k=0}^N 2^k \binom{m}{k}$ proper sign patterns of the form $\eps_\calf(x)$.

Now we fix an $i$ and prove the claim. First, take a minimal subset $G$ of the set $\set{g_{i,1}^{1/s}, \ldots, g_{i,r_i}^{1/s}}$ such that $f_i$ is in the linear span of $G$; without loss of generality, we may choose the ordering of the polynomials $g_{i,1},\ldots,g_{i,r_i}$ so that $G = \set{g_{i,1}^{1/s},\ldots,g_{i,r'_i}^{1/s}}$ and write $f_i = \sum_{j=1}^{r'_i} a'_{i,j} g_{i,j}^{1/s}$. Note that the elements of $G$ are linearly independent and that $a'_{i,j} \neq 0$ for all $j$. Since $f_i$ is not the zero function, $r'_i > 0$. Moreover, if $r'_i = 1$, $f_i$ is never zero, in which case we may set $\tilde f_i  = 1$. It remains to consider the case where $r'_i \geq 2$.

Let $\xi_j=a'_{i,j}g_{i,j}^{1/s}$, $\omega = e^{2\pi i/s}$, and
\[
\tilde{f}_i=\prod_{0 \leq t_2,\ldots, t_{r'_i} \leq s-1} (\xi_1+\omega^{t_2} \xi_2+\cdots+\omega^{t_{r'_i}}\xi_{r'_i}).
\]
\cref{lem:galois} guarantees that $\tilde{f}_i$ is a polynomial in $g_{i,1}, \ldots, g_{i,r'_i}$ and hence is a polynomial in $\RR[x_1,\ldots,x_N]$.  The degree of $\tilde{f}_i$ in $\xi_1,\ldots,\xi_{r_i}$ is $s^{r'_i-1}$, so its degree in $g_{i,1}, \ldots, g_{i,r'_i}$ is $s^{r'_i-2}$. Hence, the degree of $\tilde f_i$ in $x_1, \ldots, x_N$ is at most $s^{r'_i-2}\Delta \leq s^{r-2}\Delta$.

As it is obvious that $\VV(f_i) \subseteq \VV(\tilde f_i)$, it suffices to show that  $\tilde{f}_i$ is not the zero polynomial. To see this, note that since $g_{i,1}^{1/s},\ldots,g^{1/s}_{i,r'_i}$ are linearly independent and $a'_{i,j} \neq 0$ for all $j \in [r'_i]$, the function $\xi_1+\omega^{t_2} \xi_2+\cdots+\omega^{t_{r'_i}}\xi_{r'_i}\colon\RR^N\to\mathbb C$ cannot be the zero function for any choice of $t_2,\ldots,t_{r'_i}$. Moreover, this function is analytic because we assumed that $g_{i,j}(x) > 0$ for all $j\in[r_i]$ and $x \in \RR^N$. This implies that $\tilde f_i$ cannot be zero, since the product of nonzero complex-valued analytic functions cannot be zero.
\end{proof}
We remark that the same proof yields a similar bound in the more general case where the fractional power appearing in the definition
of the function $f_i$ is $s_i$, and the integers $s_i$ are not necessarily all equal. We omit the details since this more general statement is not needed here.

The following example shows that the functions $f_i$ considered in \Cref{thm:sign-patterns-fractional} can have many sign changes even when $N=1$ and $\Delta=s=2$ are fixed.

\begin{proposition}
\label{prop:noga}
Let $\ell$ be a positive integer, and let $0<\delta<2/\ell$ be a real number.  For $\Vec a=(a_1, \ldots, a_\ell) \in \set{-1,1}^\ell$, define $f_{\Vec a}\colon\RR\to\RR$ by
\[
f_{\Vec a}(x)\coloneqq\sum_{i=1}^\ell a_i \paren*{\sqrt{(x-i)^2 +\delta^2} - \sqrt{(x-i)^2}}.
\]
Then $\sgn(f_{\Vec a}(j))=a_j$ for all $j\in[\ell]$. 
\end{proposition}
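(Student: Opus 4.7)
The plan is to fix $j \in [\ell]$, split off the diagonal term (the $i=j$ summand) as the ``main'' contribution, and bound everything else.

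When $i=j$, the quantity $\sqrt{(j-i)^2+\delta^2}-\sqrt{(j-i)^2}$ equals $\delta-0=\delta$, so the diagonal summand contributes exactly $a_j\delta$. The goal then is to show that the remaining off-diagonal summands together have absolute value strictly less than $\delta$; this will force $\sgn(f_{\Vec a}(j))=\sgn(a_j\delta)=a_j$.

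To bound an off-diagonal summand with $i\neq j$, I would use the standard rationalization
\[\sqrt{(j-i)^2+\delta^2}-\abs{j-i}=\frac{\delta^2}{\sqrt{(j-i)^2+\delta^2}+\abs{j-i}}\leq\frac{\delta^2}{2\abs{j-i}}\leq\frac{\delta^2}{2},\]
where the last inequality uses $\abs{j-i}\geq 1$. Applying the triangle inequality to the off-diagonal terms and summing over the $\ell-1$ indices $i\neq j$ yields
\[\Abs*{\sum_{\substack{i=1\\ i\neq j}}^{\ell} a_i\paren*{\sqrt{(j-i)^2+\delta^2}-\abs{j-i}}}\leq\frac{(\ell-1)\delta^2}{2}.\]

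Finally, the hypothesis $\delta<2/\ell$ gives $(\ell-1)\delta/2<(\ell-1)/\ell<1$, so $(\ell-1)\delta^2/2<\delta$. Combining this with the diagonal computation shows that $f_{\Vec a}(j)=a_j\delta+E$ with $\abs{E}<\delta$, hence $\sgn(f_{\Vec a}(j))=a_j$, as desired. There is no real obstacle here: the whole argument is one diagonal/off-diagonal split plus the elementary estimate above; the only thing to double-check is that the bound $\delta<2/\ell$ is indeed strong enough to dominate the crude factor of $\ell-1$, which it is.
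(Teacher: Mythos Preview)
Your proof is correct and essentially identical to the paper's: both isolate the $i=j$ term as $a_j\delta$, bound each off-diagonal term by $\delta^2/(2\abs{j-i})\leq\delta^2/2$, and conclude via $(\ell-1)\delta^2/2<\delta$. The only cosmetic difference is that the paper names the estimate ``Bernoulli's inequality'' where you write out the rationalization explicitly.
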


\begin{proof}
Notice that the $i=j$ summand in the definition of $f_{\Vec a}(j)$ equals $\delta a_j$.  For $i \neq j$, Bernoulli's inequality tells us that
\[\abs*{\sqrt{(j-i)^2 +\delta^2} - \sqrt{(j-i)^2}}<\frac{\delta^2}{2\abs{j-i}} \leq \frac{\delta^2}{2}.\]
Hence, the total contribution to $f_{\Vec a}(j)$ of the terms with $i \neq j$ is at most $(\ell-1) \delta^2/2<\delta$. It follows that the $i=j$ term dominates the sum, so $\sgn(f_{\Vec a}(j))=a_j$.
\end{proof}
In particular, if we let $\ell = 2^m$ for some positive integer $m$ and choose $\Vec a_1,\ldots,\Vec a_m$ appropriately, then the tuple of functions $(f_{\Vec a_1},\ldots,f_{\Vec a_m})$ can take on all $2^m$ proper sign patterns of length $m$. Thus we can achieve $2^m$ proper sign patterns even while fixing $N = 1$ and $\Delta = s = 2$, at the expense of letting $r = 2^{m+1}$ grow exponentially. Somewhat informally, this implies that the bound in \cref{thm:sign-patterns-fractional} must have at least a linear dependence on $r$ if it is to depend polynomially on $m$. This is very far from our bound (which is exponential in $r$), and it would be interesting to close this gap.

A simpler, though still interesting, problem in this direction concerns the case where we fix $N = m = 1$ and $\Delta = s = 2$ and consider a function $f \colon \RR \to \RR$ which is a linear combination of $r$ square roots of everywhere-positive quadratic polynomials. Instead of considering the number of proper sign patterns of $f$, which is obviously bounded above by $2$, we instead consider the number of connected components of $\RR \setminus \VV(f)$. Using a similar ``multiplication by conjugates'' trick, this can be bounded above by $2^{r-1} + 1$. On the other hand, we can achieve $2r-1$ connected components by letting
\[f(x) = 1 + \sum_{i=1}^{r-1} (-1)^i a_i^{1/10} \paren*{\sqrt{x^2 + a_i^2} - a_i}\]
for a sequence $0 < a_1 < \cdots < a_{r-1}$ that grows extremely quickly. Again we have a linear lower bound and an exponential upper bound; it would be interesting to narrow the gap.

\section{The upper bound in \texorpdfstring{\cref{thm:main}}{Theorem \ref{thm:main}}}\label{sec:upper}

Using the tools from the previous section, we can quickly establish the upper bound in \Cref{thm:main}.

\begin{lemma}\label{lem:main-upper}
If $d \geq 1$ and $k \geq 1$, then $\perm_{d,k}(n)=O_{d,k}(n^{2dk})$.
\end{lemma}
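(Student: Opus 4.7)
The plan is to reduce the problem to counting proper sign patterns of a suitable tuple of functions on $\RR^{dk}$ and then apply \cref{thm:sign-patterns-fractional} directly. Fix an $n$-element set $C=\{c_1,\ldots,c_n\}\subseteq\RR^d$. Parameterize an ordered $k$-tuple of vantage points as a point $V=(v_1,\ldots,v_k)\in(\RR^d)^k=\RR^{dk}$; since every multiset lifts to such a tuple, the count of distinguishing multisets can only grow (by at most a factor of $k!$) when we work with ordered tuples, so this is harmless for an upper bound. For each pair $1\leq i<j\leq n$, define
\[
F_{ij}(V)=D_V(c_i)-D_V(c_j)=\sum_{\ell=1}^{k}\Bigl(\sqrt{\|v_\ell-c_i\|^2}-\sqrt{\|v_\ell-c_j\|^2}\Bigr).
\]
The key observation is that if $V$ distinguishes $C$, then the entire ordering $\Sigma_V^C$ is recovered from the proper sign pattern of the tuple $\calf=(F_{ij})_{i<j}$ in $\RR^{dk}$. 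Thus $\psi_k(C)$ is at most the number of distinct proper sign patterns achieved by $\calf$.

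Next I would verify that $\calf$ fits the hypotheses of \cref{thm:sign-patterns-fractional}. Each $F_{ij}$ is a sum of $2k$ terms of the form $\pm g^{1/2}$, where $g(V)=\|v_\ell-c_i\|^2$ or $\|v_\ell-c_j\|^2$ is a polynomial of degree $2$ in the coordinates of $V$ that is nonnegative everywhere. Hence I may take $N=dk$, $m=\binom{n}{2}$, $\Delta=2$, $s=2$, and $r=2k$ (which is $\geq 2$ since $k\geq 1$). The theorem then yields the upper bound
\[
\psi_k(C)\;\leq\;2\bigl(2\cdot 2^{2k-2}\cdot 2\bigr)^{dk}\sum_{\ell=0}^{dk}2^\ell\binom{\binom{n}{2}}{\ell}.
\]

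Finally, I would observe that the prefactor $2\cdot 2^{2dk^2}$ depends only on $d$ and $k$, and that the binomial sum is dominated (up to a constant depending on $d,k$) by its top term $\binom{\binom{n}{2}}{dk}=O_{d,k}(n^{2dk})$. Combining these gives $\psi_k(C)=O_{d,k}(n^{2dk})$, and since $C$ was arbitrary, this is the desired bound on $\perm_{d,k}(n)$. There is no real obstacle here: the only mild subtlety is checking that the nonnegativity hypothesis in \cref{thm:sign-patterns-fractional} is satisfied (which is immediate, since the radicands are squared Euclidean norms) and confirming that a proper sign pattern of $\calf$ uniquely determines the corresponding linear order on $C$ (which it does, because the signs of all pairwise differences $D_V(c_i)-D_V(c_j)$ are specified).
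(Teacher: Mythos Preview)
Your proposal is correct and follows essentially the same route as the paper's proof: both define the functions $F_{ij}(V)=D_V(c_i)-D_V(c_j)$ on $\RR^{dk}$, invoke \cref{thm:sign-patterns-fractional} with the same parameters $N=dk$, $m=\binom{n}{2}$, $\Delta=2$, $r=2k$, $s=2$, and conclude by noting that the proper sign pattern determines the ordering. Your aside about ordered tuples versus multisets is unnecessary (since $D_V$ is symmetric in the vantage points, the set of achievable orderings is literally the same either way), but it does no harm.
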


\begin{proof}
Let $C=\set{c_1, \ldots, c_n}\subseteq \RR^d$ be a set of $n$ (distinct) candidate points.  Consider the multiset of vantage points $V=\set{v_1, \ldots, v_k} \subseteq \RR^d$ as a variable.  For each of the $\binom{n}{2}$ choices of $1 \leq i<j \leq n$, define the function
\[f_{i,j}(V)\coloneqq D_V(c_i)-D_V(c_j)=\sum_{\ell=1}^k \Abs{c_i-v_\ell}-\sum_{\ell=1}^k \Abs{c_j-v_\ell}.\]
Writing each vantage point in coordinates as $v_\ell=(v_\ell^{(1)},\ldots, v_\ell^{(d)})$, we see that the functions $f_{i,j}$ satisfy the hypotheses of \cref{thm:sign-patterns-fractional} with the parameters $N=dk$, $m=\binom{n}{2}$, $\Delta=2$, $r=2k$, and $s=2$.  We conclude that the functions $f_{i,j}$ witness at most
\[2(2^{2k})^{dk} \sum_{\ell=0}^{dk} 2^\ell \binom{\binom{n}{2}}{\ell}=O_{d,k}(n^{2dk})\]
distinct proper sign patterns.  The signs of the functions $f_{i,j}$ fully determine the relative sizes of the quantities $D_V(c_i)$, so $\psi_k(C)=O_{d,k}(n^{2dk})$.  Since this holds for every choice of $C$, we conclude that $\perm_{d,k}(n)=O_{d,k}(n^{2dk})$, as desired.
\end{proof}

\Cref{lem:main-upper} settles the upper bound for all $k$ when $d \geq 2$ and for odd $k$ when $d = 1$.  The following lemma handles the remaining cases.
\begin{lemma}
If $k \geq 2$ is even, then $\perm_{1,k}(n)=O_{k}(n^{2k-2})$.
\end{lemma}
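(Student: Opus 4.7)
The plan is to reduce the effective parameter space from $\RR^k$ to $\RR^{k-1}$, after which \cref{thm:sign-patterns-fractional} applied in $N = k-1$ variables will automatically yield the desired $O_k(n^{2k-2})$ bound.

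The key structural observation to establish is this: for $V = (v_1, \ldots, v_k)$ sorted with $k = 2t$ even, $D_V(c)$ is a convex piecewise-linear function of $c$ whose minimum is achieved on the ``plateau'' $[v_t, v_{t+1}]$, so if $V$ distinguishes $C$ then at most one candidate $c^*$ lies in this plateau, and if it exists it must be first in $\Sigma_V^C$. I would introduce the one-parameter ``squeeze'' $V^{(\delta)}$ obtained by replacing $v_t, v_{t+1}$ with $v_t + \delta, v_{t+1} - \delta$ (for $\delta \in [0, (v_{t+1}-v_t)/2]$) and run a short case analysis to verify that $D_V(c) - D_{V^{(\delta)}}(c)$ is a nonnegative piecewise-linear ``bump'' supported on $[v_t, v_{t+1}]$, constant and equal to $2\delta$ on the sub-interval $[v_t+\delta, v_{t+1}-\delta]$. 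In particular the $D$-values of all non-plateau candidates are unchanged under the squeeze, while the $D$-value of $c^*$ (if any) strictly decreases yet still remains the global minimum over $C$; hence $V^{(\delta)}$ also distinguishes $C$ and produces the same ordering as $V$.

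Taking $\delta = (v_{t+1}-v_t)/2$ collapses $v_t$ and $v_{t+1}$ to $m \coloneqq (v_t + v_{t+1})/2$, showing that $\Sigma_V^C$ depends only on the reduced datum $V^\flat \coloneqq (v_1, \ldots, v_{t-1}, m, v_{t+2}, \ldots, v_k) \in \RR^{k-1}$. Every ordering in $\Psi_k(C)$ is then a proper sign pattern of the $\binom{n}{2}$ differences of $\tilde D_{V^\flat}(c) \coloneqq \sum_{\ell \neq t, t+1} |c - v_\ell| + 2|c - m|$ as $V^\flat$ ranges over $\RR^{k-1}$. Since each $|c - v|$ equals $\sqrt{(c-v)^2}$ with $(c-v)^2$ a nonnegative degree-$2$ polynomial in $V^\flat$, these differences fit the hypotheses of \cref{thm:sign-patterns-fractional} with $N = k-1$, $m = \binom{n}{2}$, $\Delta = 2$, $r = 2(k-1)$, and $s = 2$, yielding the claimed bound $\psi_k(C) \leq O_k(n^{2(k-1)}) = O_k(n^{2k-2})$.

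The main obstacle is the squeeze identity together with the verification that the deformation preserves both distinguishability and the ordering throughout---in particular that the fully squeezed multiset still distinguishes $C$, so that the sign patterns counted are genuinely proper. Once that is settled, losing one dimension in the parameter space and invoking \cref{thm:sign-patterns-fractional} is routine.
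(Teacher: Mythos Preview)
Your approach is correct and essentially identical to the paper's: both collapse the two middle vantage points $v_{k/2}, v_{k/2+1}$ to their common midpoint, observe that this leaves $D_V$ unchanged off the plateau $[v_{k/2}, v_{k/2+1}]$ while (weakly) decreasing it on the plateau, and then re-run the sign-pattern bound of \cref{thm:sign-patterns-fractional} with $N = k-1$ free parameters. The only quibble is that the $D$-value of the plateau candidate $c^*$ need not \emph{strictly} decrease (e.g.\ if $c^* = v_{k/2}$ it stays fixed), but since $c^*$ was already the unique minimizer this does not affect distinguishability or the ordering, and your conclusion stands.
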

\begin{proof}
Given a multiset $V = \set{v_1,\ldots,v_k}$ with $v_1 \leq v_2 \leq \cdots \leq v_k$, let \[\overline V = \set*{v_1,\ldots,v_{k/2-1},\frac{v_{k/2}+v_{k/2+1}}{2},\frac{v_{k/2}+v_{k/2+1}}{2},v_{k/2+2},\ldots,v_k}.\] We claim that if $V$ distinguishes $C$, then $\overline V$ also distinguishes $C$ and we have  $\Sigma^C_{\overline V} = \Sigma^C_V$. To see this, note that $D_V$ achieves its minimum precisely on the interval $[v_{k/2}, v_{k/2+1}]$. Moreover, $D_{\overline V}(x) = D_V(x)$ for all $x \notin [v_{k/2},v_{k/2+1}]$, while $D_{\overline V}(x) \leq D_V(x)$ for all $x \in [v_{k/2},v_{k/2+1}]$. Therefore, the values in $(D_{\overline V}(c))_{c \in C}$ are the same as in $(D_V(c))_{c \in C}$, except that the minimum may decrease. This cannot change the relative order.

As a result, to determine $\Psi_1(C)$, one needs to consider $\Sigma^C_V$ only for multisets $V$ of $k$ points in which two points are identical. Arguing as in the proof of \cref{lem:main-upper} gives $\perm_{1,k}(n) = O_k(n^{2(k-1)})$, as desired.
\end{proof}

\section{The lower bound in \texorpdfstring{\cref{thm:main}}{Theorem \ref{thm:main}}: general techniques}\label{sec:lower}
\subsection{Overview of the proof strategy} \label{subsec:lowerintro}
Before diving into the technical details of the proof of~\cref{thm:main}, we will give a qualitative high-level description of our strategy.

It is reasonable to expect that a generic $n$-element set $C\subseteq \RR^d$ will have $\psi_{k}(C)$ within a constant factor of $\perm_{d,k}(n)$. The difficulty in proving the lower bound in \cref{thm:main} thus lies in finding a set $C$ that both ``behaves generically'' and has enough structure to be analyzed.  To achieve this, our construction of $C$ will contain features on many different scales.  When $d \geq 2$, this will let us use asymptotic estimates such as $\sqrt{1+R^2}\approx R$ and $\sqrt{R^2 + aR} - R \approx a/2$ for large $R$, which simplify the square roots inherent in Euclidean distances.

For a fixed $d$, our proof proceeds by inductively turning a construction with $k$ vantage points into a construction with $k+2$ vantage points.  When $d=1$, our base cases are $k=1$ and $k=2$; the result for the former is already known, and the result for the latter is an immediate consequence.  When $d \geq 2$, our base cases $k = 0$ and $k = 1$ are trivial and already known, respectively. The inductive step takes a set $C'$ from our inductive hypothesis and adds two carefully-chosen sets $C_1$ and $C_2$ ``flanking'' $C'$ such that $C'$ is located roughly halfway between $C_1$ and $C_2$. We make the scale of $C_1$ and $C_2$ much larger than the scale of $C'$, and we make the separation distances between $C'$, $C_1$, and $C_2$ even larger.

We place two vantage points $u_1$ and $u_2$ near $C_1$ and $C_2$, respectively, and place $k$ additional vantage points $v_1, \ldots, v_k$ close to $C'$.  The quantity $\Abs{c - u_1} + \Abs{c - u_2}$ will be essentially constant (in fact, exactly constant when $d=1$) on $C'$ due to the large separation distances, so the relative order of the points of $C'$ will be entirely determined by the $k$ vantage points near $C'$.  At the same time, for each $c\in C_1\cup C_2$, since the scale of $C_1$, $C_2$, $u_1$, and $u_2$ exceeds that of $C'$ and $v_1, \ldots, v_k$, the variation in the quantity $\Abs{c - u_1} + \Abs{c - u_2}$ (as $u_1$ and $u_2$ vary) will vastly exceed the variation in the quantity $\sum_i \Abs{c-v_i}$ (as the $v_i$'s vary), so the relative order of the points in $C_1 \cup C_2$ will be entirely determined by the vantage points $u_1$ and $u_2$. As a result, the relative orderings of the sets $C'$ and $C_1 \cup C_2$ can be determined independently, and the total number of orderings of $C' \cup C_1 \cup C_2$ will be at least the product of the numbers of orderings of the two component parts.

To formalize the notion of large separations, we define ``effective distance functions'' that control the relative order of the points in $C_1 \cup C_2$ in the limit where the scales of $C_1$ and $C_2$, as well as their separation distance, go to infinity. This perspective allows us to reduce the original problem to a two-``vantage-point'' subproblem involving the effective distance functions. For any given individual solution to the subproblem, we will be able to complete the inductive step by choosing sufficiently large scales and separation distances, but this limiting process can be forgotten entirely when solving the subproblem itself.  After this stage, the proofs for $d=1$ and for $d \geq 2$ diverge.

For $d=1$, our effective distance functions are piecewise linear and can be analyzed manually. The argument for $d \geq 2$ is more complicated.  After the initial reduction via large separations, our next reduction scales $C_2$ to be much larger than $C_1$; this further simplifies the effective distance functions.  We will take $C_1$ and $C_2$ to be two (scaled) copies of a $(d-1)$-dimensional configuration $C^*$ such that $\psi_1(C^*) = \Omega_d(n^{2(d-1)})$; they will be located on two hyperplanes $H_1$ and $H_2$ orthogonal to the separation axis. The projection of $u_i$ onto $H_i$ will determine the relative ordering of the points in $C_i$ for each $i$, and then the orthogonal distances from $u_1, u_2$ to $H_1, H_2$ will determine the interleaving of points from $C_1$ and points from $C_2$ in the ordering of $C_1 \cup C_2$.  Thanks to our prior simplification work, this interleaving process can be analyzed almost exactly, and we will show that generically there are $\Theta(n^4)$ possible interleavings. Combined with $\Theta_d(n^{2(d-1)})$ orderings of each of $C_1$ and $C_2$, this will give $\Theta_d(n^{4d})$ total orderings of $C_1 \cup C_2$. A schematic of the entire construction is shown in \cref{fig:constructionschematic}. 

\begin{figure}[tbp]
\begin{center}{\includegraphics[height=5.44cm]{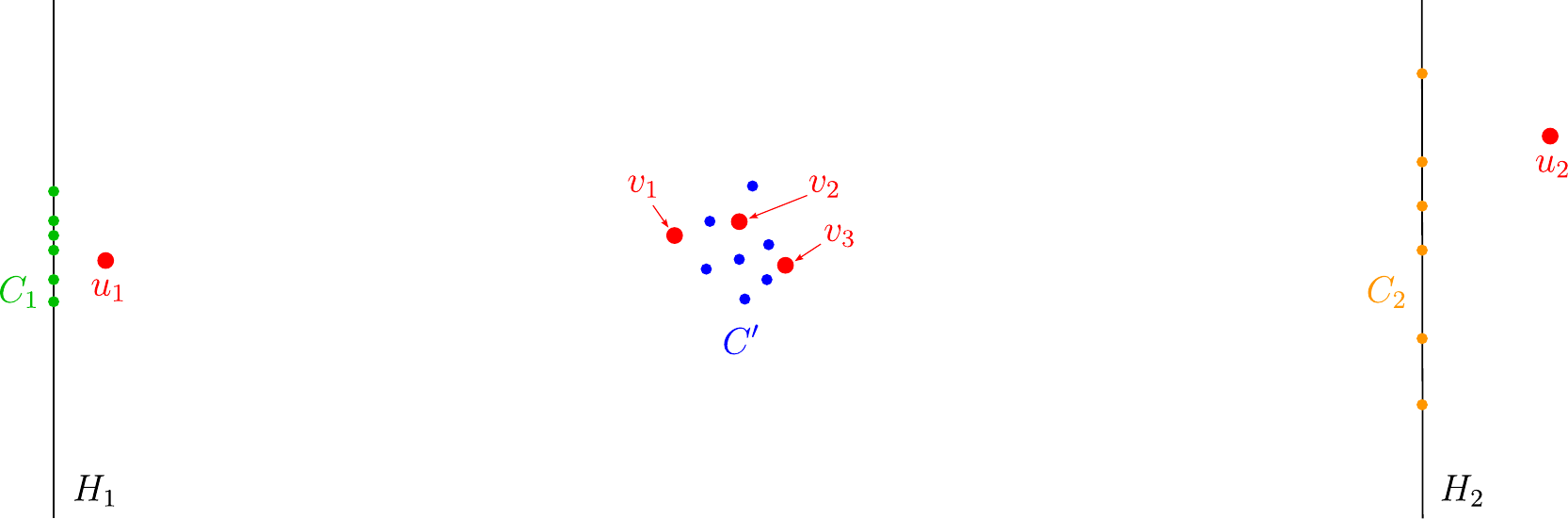}}
\end{center}
\caption{A schematic illustration of our general recursive approach for $d \geq 2$. Dots representing vantage points are large and red.}
\label{fig:constructionschematic}
\end{figure}

\subsection{Pairs of flanking points} 
We begin by formalizing the notion of an infinite separation limit through the definition of effective distance functions. For the remainder of the proof, write $e_1 \coloneqq (1,0, 0,\ldots)$ for the first standard basis vector in $\RR^d$.

\begin{definition}
Let $k$ be a nonnegative integer, let $\hat U = (\hat u_1, \hat u_2) \in \RR^d \times \RR^d$ be an ordered pair of points, and let $\hat c\in\RR^d$ be a point.  Define
\[
\hat{D}^1_{k,\hat U}(\hat c) \coloneqq \Abs{\hat c - \hat u_1} + ((k+1)\hat c + \hat u_2) \cdot  e_1 \quad\text{and}\quad
\hat{D}^2_{k,\hat U}(\hat c) \coloneqq \Abs{\hat c - \hat u_2} + ((k+1)\hat c + \hat u_1) \cdot e_1.
\]
\end{definition}

To motivate these definitions,
we consider the multiset of vantage points \[V = \set{\hat u_1 + Re_1, -\hat u_2 - Re_1, \underbrace{0,\ldots,0}_{k\text{ points}}}\] and the candidate points $c_1 = \hat c_1 + Re_1$ and $c_2 = -\hat c_2 - Re_1$, where $R$ is a large positive real number.  With $\hat u_1, \hat u_2, \hat c_1, \hat c_2$ held constant, we have
\[\hat{D}_{k,\hat U}^1(\hat c_1) = \lim\limits_{R \to \infty} (D_{V}(c_1) - (k+1)R) \quad\text{and}\quad
\hat{D}_{k,\hat U}^2(\hat c_2) = \lim\limits_{R \to \infty} (D_{V}(c_2) - (k+1)R).\]
So, if we care about the relative sizes of quantities of the form $D_V(c_1), D_V(c_2)$ in the regime where $R$ is large, then it suffices to understand the relative sizes of the quantities $\hat{D}_{k,\hat U}^1(\hat c_1), \hat{D}_{k,\hat U}^2(\hat c_2)$.  We now define an analogue of $\psi_k(C)$ for these effective distances.  In what follows, we write $A \sqcup B$ to denote the disjoint union of the sets $A,B$ (even if $A,B$ have nonempty intersection as sets).

\begin{definition} \label{def:hatpsi}
Let $k$ be a nonnegative integer, and let $\hat C_1$ and $\hat C_2$ be sets of points in $\RR^d$. Given $\hat U \in \RR^d \times \RR^d$, let $\hat D_{k,\hat U} \colon \hat C_1 \sqcup  \hat C_2 \to \RR$ be the function that equals $\hat D^1_{k,\hat U}$ on $\hat C_1$ and equals  $\hat D^2_{k,\hat U}$ on $\hat C_2$. Let $\hat \Sigma_{\hat U}^{\hat C_1,\hat C_2}$ be the function $[\abs{\hat C_1} + \abs{\hat C_2}] \to \hat C_1 \sqcup\hat C_2$ such that $\hat D_{k,\hat V} \circ \hat \Sigma_{\hat U}^{\hat C_1,\hat C_2}$ is increasing (if such a function exists), and let $\hat \Psi_k(\hat C_1, \hat C_2)$ be the set of all such $\hat \Sigma_{\hat U}^{\hat C_1,\hat C_2}$. Finally, let $\hat \psi_k(\hat C_1,\hat C_2) = \abs{\hat \Psi_k(\hat C_1,\hat C_2)}$.
\end{definition}

The following three lemmas capture the main steps of our inductive argument.  The first is for all $d$, the second is for $d=1$, and the third is for $d \geq 2$.
 
\begin{lemma} \label{lem:lowerinduction}
Let $d \geq 1$, $k\geq 0$, and $m \geq 0$, with the additional constraint that $m=0$ if $k=0$. Then, for sets $\hat C_1, \hat C_2 \subseteq \RR^d$, we have
\[\perm_{d,k+2}(m + \abs{\hat C_1} + \abs{\hat C_2}) \geq \hat \psi_k(\hat C_1, \hat C_2) \perm_{d,k}(m).\]
\end{lemma}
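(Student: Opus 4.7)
The plan is to build an $(m + \abs{\hat C_1} + \abs{\hat C_2})$-element set $C \subseteq \RR^d$ with at least $NM$ distinct orderings in $\Psi_{k+2}(C)$, where $N \coloneqq \perm_{d,k}(m)$ and $M \coloneqq \hat\psi_k(\hat C_1, \hat C_2)$. Fix $C' \subseteq \RR^d$ with $\abs{C'} = m$ and $\psi_k(C') = N$, together with $k$-element multisets $V'_1, \ldots, V'_N$ witnessing the distinct orderings $\Sigma_1, \ldots, \Sigma_N$ of $C'$. Fix pairs $\hat U_j = (\hat u_1^{(j)}, \hat u_2^{(j)})$ (for $j \in [M]$) witnessing the distinct orderings $\hat\Sigma_1, \ldots, \hat\Sigma_M$ of $\hat C_1 \sqcup \hat C_2$. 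Let $S_i \coloneqq \sum_{v \in V'_i} v \cdot e_1$, and let $\delta_j > 0$ be the minimum positive gap in the sorted list of effective-distance values under $\hat U_j$. Because $\hat D^\alpha_{k, \lambda \hat U}(\lambda \hat c) = \lambda \hat D^\alpha_{k, \hat U}(\hat c)$, rescaling $\hat C_\alpha$ and each $\hat U_j$ by a common positive factor preserves $\hat\psi_k(\hat C_1, \hat C_2)$; after such a rescaling with $\lambda > 2\max_i \abs{S_i}/\min_j \delta_j$, I may assume $2\max_i \abs{S_i} < \min_j \delta_j$.

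For a parameter $R$ to be taken large at the end, I define $C_1 \coloneqq \set{\hat c + Re_1 : \hat c \in \hat C_1}$, $C_2 \coloneqq \set{-\hat c - Re_1 : \hat c \in \hat C_2}$, $C \coloneqq C' \sqcup C_1 \sqcup C_2$, and
\[V_{i,j} \coloneqq V'_i \cup \set{\hat u_1^{(j)} + Re_1,\ -\hat u_2^{(j)} - Re_1}.\]
Using the expansion $\Abs{w + Re_1} = R + w \cdot e_1 + O(R^{-1})$ valid for bounded $w$, direct calculation on the three pieces of $C$ yields
\begin{align*}
D_{V_{i,j}}(c) &= 2R + D_{V'_i}(c) + (\hat u_1^{(j)} + \hat u_2^{(j)}) \cdot e_1 + O(R^{-1}) && (c \in C'),\\
D_{V_{i,j}}(\hat c + Re_1) &= (k+2)R + \hat D^1_{k, \hat U_j}(\hat c) - S_i + O(R^{-1}) && (\hat c \in \hat C_1),\\
D_{V_{i,j}}(-\hat c - Re_1) &= (k+2)R + \hat D^2_{k, \hat U_j}(\hat c) + S_i + O(R^{-1}) && (\hat c \in \hat C_2).
\end{align*}

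For $R$ large enough to dominate all of the finitely many positive gaps that arise, three conclusions follow. First, the leading terms $2R$ and $(k+2)R$ differ by $kR$, so every point of $C'$ strictly precedes every point of $C_1 \sqcup C_2$ in the induced order. Second, within $C'$ the $c$-independent constant $(\hat u_1^{(j)} + \hat u_2^{(j)}) \cdot e_1$ cancels in every pairwise comparison, so the induced order on $C'$ equals $\Sigma_i$ (using that $V'_i$ distinguishes $C'$ with a positive minimum gap). Third, within $C_1 \sqcup C_2$ the induced order is determined by comparing $\hat D^1_{k, \hat U_j}(\hat c_1) - S_i$ for $\hat c_1 \in \hat C_1$ against $\hat D^2_{k, \hat U_j}(\hat c_2) + S_i$ for $\hat c_2 \in \hat C_2$; the asymmetric shift affects any such comparison by at most $2\abs{S_i}$, which is strictly less than $\delta_j$ by the preliminary rescaling, so the order matches $\hat\Sigma_j$ under the natural bijections $\hat C_\alpha \leftrightarrow C_\alpha$. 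Together these three facts show that $\Sigma^C_{V_{i,j}}$ uniquely encodes the pair $(\Sigma_i, \hat\Sigma_j)$, so the map $(i, j) \mapsto \Sigma^C_{V_{i,j}}$ is injective into $\Psi_{k+2}(C)$ and yields the claimed $NM$ distinct orderings.

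I expect the main obstacle to be the asymmetric shift $\pm S_i$ appearing in conclusion three: for a fixed $i$, two different $\hat\Sigma_j$'s could a priori collapse to the same ordering on $C_1 \sqcup C_2$ if $\abs{S_i}$ were comparable to the cross-gaps of $\hat\Sigma_j$, destroying injectivity. The remedy is precisely the preliminary rescaling of $\hat C_\alpha$ and $\hat U_j$ by $\lambda$, which is legal because $\hat\psi_k$ is scale-invariant and effective because $S_i$ (a property of the fixed $V'_i$) does not scale. The boundary cases are routine: if $k = 0$ then each $V'_i$ is empty and $S_i = 0$ automatically, while if $m = 0$ then $C' = \emptyset$ and conclusion two is vacuous.
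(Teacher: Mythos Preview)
Your proof is correct and follows the same overall strategy as the paper: place $C'$ near the origin, place translated copies of $\hat C_1,\hat C_2$ far out along the $\pm e_1$ axis, and add two far-away vantage points so that the ordering on $C'$ is governed by $V'_i$ while the ordering on $C_1\sqcup C_2$ is governed by the effective distances $\hat D^1,\hat D^2$.

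The one substantive difference is in how the $V'$-dependent contamination on $C_1\sqcup C_2$ is handled. You translate by $Re_1$ only, which leaves an asymmetric shift $\pm S_i$ of order $1$ in the effective distances; you then neutralize it by a preliminary rescaling of $\hat C_1,\hat C_2,\hat U_j$ by a fixed factor $\lambda$ (exploiting the homogeneity $\hat D^\alpha_{k,\lambda\hat U}(\lambda\hat c)=\lambda\hat D^\alpha_{k,\hat U}(\hat c)$) so that every gap $\delta_j$ exceeds $2\max_i|S_i|$. The paper instead uses two scales simultaneously, setting $C_1=R^3e_1+R\hat C_1$ and $u_1=R^3e_1+R\hat u_1$ (and similarly for the second copy); this makes the effective-distance term appear as $R\hat D^\alpha_{k,\hat U}(\hat c)$ while the $V'$-dependent term stays $O(1)$, so the contamination is automatically swallowed as $R\to\infty$ without any preliminary normalization. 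Both devices work; the paper's is a bit cleaner in that it avoids fixing the witnesses and computing $\max_i|S_i|/\min_j\delta_j$ in advance, but yours is entirely valid.
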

\begin{lemma} \label{lem:biexistd1}
For $k,m \geq 1$, there exist sets $\hat C_1, \hat C_2 \subseteq \RR$ of size $2m$ such that ${\hat \psi_k(\hat C_1, \hat C_2) \geq m^4}$.
\end{lemma}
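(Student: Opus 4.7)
The plan is to reduce $\hat\psi_k(\hat C_1,\hat C_2)$ to counting cells in a planar arrangement. Since $k\geq 1$ and $d=1$, the function $\hat D^1_{k,\hat U}(\hat c)=|\hat c-\hat u_1|+(k+1)\hat c+\hat u_2$ is strictly increasing in $\hat c$ (with slope $k$ on $\hat c<\hat u_1$ and slope $k+2$ on $\hat c>\hat u_1$), and similarly for $\hat D^2_{k,\hat U}$. Consequently, for every $\hat U\in\RR^2$ the orderings of $\hat C_1$ and $\hat C_2$ induced by $\hat D_{k,\hat U}$ coincide with the natural orderings of these two subsets of $\RR$. Therefore $\hat\psi_k(\hat C_1,\hat C_2)$ equals the number of distinct interleavings of the two sorted sequences realizable as $\hat U$ varies, i.e., the number of open cells in the planar arrangement of the $4m^2$ ``decision curves''
$\Gamma_{x,y}\coloneqq\{\hat U:\hat D^1_{k,\hat U}(x)=\hat D^2_{k,\hat U}(y)\}$
indexed by $(x,y)\in\hat C_1\times\hat C_2$.

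I would then restrict attention to sets satisfying $\max\hat C_1<\min\hat C_2$. A direct case split on the signs of $x-\hat u_1$ and $y-\hat u_2$ (four quadrants, only two of which contain parts of the curve) shows that $\Gamma_{x,y}$ is an L-shape: a diagonal ray of slope $1$ (namely $\hat u_2=\hat u_1+\tfrac{k+2}{2}(y-x)$ for $\hat u_1\leq\alpha(x,y)$) joined at the corner $(\alpha(x,y),y)$ to a vertical ray ($\hat u_1=\alpha(x,y)$ for $\hat u_2\geq y$), where $\alpha(x,y)\coloneqq\tfrac{(k+2)x-ky}{2}$. Since parallel diagonals cannot cross and parallel verticals cannot cross, two such L-shapes meet in at most one point; a short computation shows that they \emph{do} cross precisely when $(x-x')/(y-y')\in(k/(k+2),1)$. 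Applying Euler's formula to this arrangement of bi-infinite, pairwise simply intersecting curves, the number of open cells equals $1+4m^2+I$, where $I$ counts the intersecting pairs; so it suffices to produce $\hat C_1,\hat C_2$ with $I\geq m^4$.

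For the construction, I would take $\hat C_1=\{1,2,\ldots,2m\}$ and $\hat C_2=\{L+\rho,L+2\rho,\ldots,L+2m\rho\}$ for a sufficiently large $L$ and a scale $\rho>0$ to be tuned in terms of $k$, with a small generic perturbation of the $y_j$ if needed to guarantee that no two curves share a ray and no three curves concur (so Euler's formula applies cleanly). Counting intersecting pairs then reduces to summing $(2m-d_1)(2m-d_2)$ over positive integer displacements $(d_1,d_2)=(i-i',j-j')$ with $d_1/(\rho d_2)\in(k/(k+2),1)$, together with the symmetric negative-displacement contribution; a direct computation shows this is $\Omega_k(m^4)$, and by choosing $\rho$ optimally (equivalently, by stretching $\hat C_2$ so that integer ratios cluster in the valid interval) one can push the bound to the desired $\geq m^4$.

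The main obstacle is getting the exact bound $m^4$ rather than merely $\Omega_k(m^4)$: the ratio interval $(k/(k+2),1)$ has width $2/(k+2)$, which shrinks as $k$ grows, so the integer-grid construction $\rho=1$ gives only $(32/27)m^4$ intersections for $k=1$ but about $(5/6)m^4$ for $k=2$ and fewer for larger $k$. Closing this gap requires care in choosing $\rho$ (or using a slightly non-arithmetic, generic $\hat C_2$) so that many more integer ratios $(i-i')/(j-j')$ land in the valid interval, while simultaneously maintaining the general-position conditions needed to apply Euler's formula and to guarantee pairs of curves genuinely cross rather than tangentially touch.
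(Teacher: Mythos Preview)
Your reduction is sound: the monotonicity observation, the L-shape description of the decision curves $\Gamma_{x,y}$ (under the restriction $\max\hat C_1<\min\hat C_2$), the crossing criterion $(x-x')/(y-y')\in(k/(k+2),1)$, and the Euler-formula count $1+4m^2+I$ are all correct. The gap is exactly the one you flag, and it is real rather than a matter of tuning $\rho$. With $\hat C_1$ and $\hat C_2$ arithmetic progressions of step $1$ and $\rho$, the number of pairs $\bigl((x,y),(x',y')\bigr)$ with $(x-x')/(y-y')$ in an interval of width $2/(k+2)$ is $O(m^4/k)$ regardless of $\rho$: scaling $\rho$ only shifts the target interval for $d_1/d_2$, and the weighted count $\sum (2m-d_1)(2m-d_2)$ over such $(d_1,d_2)$ cannot exceed a $O(1/k)$ fraction of the total. ``Slightly non-arithmetic'' choices do not help for the same reason—the obstruction is that the allowed ratio window shrinks like $1/k$, so any construction confined to your regime yields only $\Omega_k(m^4)$, not a $k$-uniform $m^4$. (This would still suffice for the main theorem, where $k$ is fixed, but it does not prove the lemma as stated.)

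The paper proceeds quite differently and, notably, its construction does \emph{not} satisfy your hypothesis $\max\hat C_1<\min\hat C_2$. It takes each of $\hat C_1,\hat C_2$ to be a union of two size-$m$ clusters at widely separated scales (roughly $kR$ and $3kR$ for $\hat C_1$, and $(k+2)R$ and $(3k+2)R$ for $\hat C_2$), with the spacings chosen so that for vantage pairs of the form $\hat U=(k(k+2)w_1,\,2(k+2)R+k(k+2)w_2)$ the four blocks of values $\hat D^{\,i}_{k,\hat U}$ simplify to expressions that are linear in $w_1$ and $w_2$. Crucially, $\hat u_2$ lies \emph{between} the two clusters of $\hat C_2$, so both slopes of $\hat D^2$ are used. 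After cancellation, the relative order of one pair of clusters depends only on $w_1$, and the relative order of the other pair depends only on $w_1-w_2$; each is a standard one-parameter interleaving problem giving $m^2+1$ orderings, and since $w_1$ and $w_1-w_2$ vary independently one gets $\geq m^4$ orderings directly, with no cell-counting and no $k$-dependence in the constant. If you want to repair your approach, dropping the separation assumption and allowing the kink of $\hat D^2$ to fall inside $\hat C_2$ is the essential missing ingredient.
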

\begin{lemma} \label{lem:biexist}
For $d \geq 2$, $k \geq 0$, and $m \geq 1$, there exist sets $\hat C_1, \hat C_2 \subseteq \RR^d$ of size $m$ such that $\hat \psi_k(\hat C_1, \hat C_2) = \Omega_{d}(m^{4d})$.
\end{lemma}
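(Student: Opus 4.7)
The plan is to exploit the linear $(k+1)\hat c\cdot e_1$ term in the effective distance functions by placing $\hat C_1$ and $\hat C_2$ on two parallel affine hyperplanes perpendicular to $e_1$, so that the orderings within each set and the interleaving between them become largely independent degrees of freedom. Concretely, I would take an $m$-point configuration $C^* \subseteq \RR^{d-1}$ with $\psi_1(C^*) = \Omega_d(m^{2(d-1)})$, guaranteed by Good and Tideman's classical tight lower bound in dimension $d-1$. Fix $T > 0$, set $\hat C_1 = \set{0}\times C^*$ and $\hat C_2 = \set{T}\times C^*$, and parameterize $\hat u_1 = ae_1 + (0, U_1)$ and $\hat u_2 = (T-b)e_1 + (0, U_2)$ with $a, b \in \RR$ and $U_1, U_2 \in \RR^{d-1}$. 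Using $\hat c \cdot e_1 \in \set{0, T}$, one computes
\[
\hat D^1_{k,\hat U}(0, c^*_i) = \sqrt{a^2 + \Abs{c^*_i - U_1}^2} + T - b, \qquad
\hat D^2_{k,\hat U}(T, c^*_j) = \sqrt{b^2 + \Abs{c^*_j - U_2}^2} + (k+1)T + a.
\]
The first formula shows that the ordering within $\hat C_1$ depends only on $U_1$, and similarly the ordering within $\hat C_2$ depends only on $U_2$. Hence varying $U_1, U_2$ independently realizes at least $\psi_1(C^*)^2 = \Omega_d(m^{4(d-1)})$ distinct pairs of within-set orderings.

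Next I would fix $(U_1, U_2)$ in the interior of one such ordering-pair cell, write $r_i = \Abs{c^*_i - U_1}$ and $s_j = \Abs{c^*_j - U_2}$, and count interleavings of $\hat C_1$ and $\hat C_2$ realized as $(a,b)$ varies over $\RR^2$. The comparison $\hat D^1(0, c^*_i) \lessgtr \hat D^2(T, c^*_j)$ reduces to comparing $\sqrt{a^2+r_i^2} - a$ with $\sqrt{b^2 + s_j^2} + b + kT$; rationalizing by squaring twice, the equality locus $C_{ij}$ lies on a plane algebraic curve of degree at most $4$. An arrangement of $m^2$ such curves in $\RR^2$ has $O(m^4)$ cells, and the key point is that for a generic choice of $(U_1, U_2)$ these curves are in general position, i.e., pairwise distinct, with any two intersecting transversely in $O(1)$ points and no three concurrent, so standard arrangement theory yields $\Omega(m^4)$ cells in the arrangement. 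Each cell corresponds to a distinct interleaving.

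Combining the two steps, for each of the $\Omega_d(m^{4(d-1)})$ realizable ordering-pair cells I select a generic $(U_1, U_2)$ and vary $(a,b)$ to realize $\Omega(m^4)$ interleavings; since distinct triples (ordering of $\hat C_1$, ordering of $\hat C_2$, interleaving) give distinct elements of $\hat\Psi_k(\hat C_1, \hat C_2)$, this gives
\[
\hat\psi_k(\hat C_1, \hat C_2) \geq \Omega_d(m^{4(d-1)}) \cdot \Omega(m^4) = \Omega_d(m^{4d}),
\]
as required. The overall power $4d = 4(d-1) + 4$ thus decomposes cleanly into a within-sets contribution plus a 2-parameter interleaving contribution.

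The main obstacle is rigorously establishing the general-position claim for the family $\{C_{ij}\}$. General position is encoded by a finite list of polynomial inequalities in $(r_i)_i, (s_j)_j$ (and $kT$), which cut out a proper Zariski-closed bad locus in the $(U_1, U_2)$-parameter space; the expected route is to show by a direct parameter-counting / dimension argument that this bad locus has positive codimension inside each within-ordering cell, so that a generic point of the cell satisfies the condition. Making this quantitative may require either replacing $C^*$ by a small algebraic perturbation (retaining its Good-Tideman count) chosen so that the induced $(r_i), (s_j)$ avoid every coincidence condition simultaneously, or an explicit analysis of the degree-$4$ curves $C_{ij}$ in the rational parameterization $(a,b) = \bigl((r_i^2-u^2)/(2u),\,((u-kT)^2-s_j^2)/(2(u-kT))\bigr)$ to certify the transverse $\Theta(1)$-point intersections.
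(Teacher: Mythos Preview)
Your high-level strategy matches the paper's: take $C^* \subseteq \RR^{d-1}$ with $\psi_1(C^*) = \Omega_d(m^{2(d-1)})$, let $\hat C_1, \hat C_2$ be copies of $C^*$ on hyperplanes orthogonal to $e_1$, and factor the count as (pairs of within-set orderings) $\times$ (interleavings). The paper, however, inserts a second limiting reduction (\cref{lem:thing}) before the interleaving count: it scales $\hat C_2$ by a large factor $R$ and places $\hat u_2$ at distance of order $R^2$, so that the effective distance on $\hat C_2$ collapses to the purely quadratic $\check D^2_{\check V}(\check c) = \check y\Abs{\check c - \check v_2}^2$. This is what makes the interleaving analysis tractable: the crossing condition for two comparison curves then reads $\vartheta_{r_i}(x)/\vartheta_{r_{i'}}(x) = s_j^2/s_{j'}^2$, and the monotonicity and explicit range of $\vartheta_{a,b}$ (\cref{lem:theta}) determine exactly when a real solution exists.

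Your curve-arrangement step, by contrast, has a genuine gap. General position in the sense you state---pairwise distinct curves, transverse intersections, no three concurrent---does \emph{not} by itself give $\Omega(m^4)$ cells; for that you need $\Omega(m^4)$ \emph{real} intersection points, and B\'ezout only counts intersections over the complex projective plane. In your setup the curves $C_{ij}$ and $C_{ij'}$ with $j\neq j'$ never meet (subtracting their equations forces $\vartheta_{s_j}(-b)=\vartheta_{s_{j'}}(-b)$, impossible when $s_j\neq s_{j'}$), and likewise $C_{ij}$ and $C_{i'j}$ are disjoint. So at most $2\binom{m}{2}^2$ of the $\binom{m^2}{2}$ pairs can intersect at all, and showing that a constant fraction of those actually do over $\RR$ is precisely the substantive work you are deferring to an unproven genericity claim. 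The paper in fact never asserts $\Omega(m^4)$ interleavings for a single fixed $(U_1,U_2)$; instead it proves $\Gamma(v_1,v_2)$ interleavings for each good pair and then symmetrizes, using $\Gamma(v_\sigma,v_{\sigma'})+\Gamma(v_{\sigma'},v_\sigma)=\binom{\abs{C}}{2}^2$ summed over unordered pairs $\{\sigma,\sigma'\}$ of within-set orderings (\cref{lem:gdd} and the proof of \cref{lem:psicc}). Some analogous device, together with the quadratic reduction of \cref{lem:thing}, is what your argument is missing.
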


Before proving these lemmas, let us see how they imply the desired estimates on $\perm_{d,k}(n)$ for fixed $d,k$.  We proceed by induction on $k$. For the inductive step, we note that combining \cref{lem:biexistd1,lem:biexist} yields that for all $d \geq 1$ and $k \geq 1$, there exist sets $\hat C_1, \hat C_2$ of size at most $n/3$ with $\hat \psi_k(\hat C_1, \hat C_2) = \Omega_{d}(n^{4d})$, so after applying \cref{lem:lowerinduction} we find that
\[\perm_{d,k+2}(n) \geq \psi_{d,k+2}(\floor{n/3} + \abs{\hat C_1} + \abs{\hat C_2}) \geq \Omega_{d}(n^{4d}) \perm_{d,k}(\floor{n/3}).\]
Therefore it suffices to show the base cases $k = 1,2$.

The base case $k = 1$, i.e., that $\perm_{d,1}(n) = \Omega_d(n^{2d})$ for all $d \geq 1$, follows from the result of \cite{Good1977, Zaslavsky2002} discussed in \Cref{sec:intro}. In the $k = 2$ case, we wish to prove that $\perm_{d,2}(n)$ is $\Omega(n^2)$ if $d = 1$ and $\Omega_d(n^{4d})$ otherwise. In the $d = 1$ case, this follows from the fact that $\perm_{1,2}(n) \geq \perm_{1,1}(n)$, which can be easily proven by putting the two vantage points at the same place. In the $d \geq 2$ case, we use \cref{lem:biexist} to find sets $\hat C_1$ and $\hat C_2$ of size $\floor{n/2}$ such that $\hat \psi_k(\hat C_1,\hat C_2) = \Omega_d(n^{4d})$. Then, by \cref{lem:lowerinduction} we have
\[\perm_{d,2}(n) \geq \perm_{d,2}(2\floor{n/2}) \geq \hat \psi_k(\hat C_1,\hat C_2) = \Omega_d(n^{4d}),\]
where we use the trivial fact that $\perm_{d,0}(0) = 1$.

We now dispose of \cref{lem:lowerinduction}; the proofs of \cref{lem:biexistd1,lem:biexist} will occupy the following two sections.

\begin{proof}[Proof of \cref{lem:lowerinduction}]
Let $C'$ be such that $\psi_k(C') = \perm_{d,k}(m)$. Then, for a positive real number $R$, let $C_1 = R^3e_1 + R\hat C_1$, $C_2 = -R^3 e_1 - R\hat C_2$, and $C = C' \cup C_1 \cup C_2$. Since $C'$, $C_1$, and $C_2$ are disjoint for $R$ sufficiently large, it suffices to show that
\[\psi_{k+2}(C) \geq \psi_k(C') \hat \psi_k(\hat C_1, \hat C_2)\]
for all sufficiently large $R > 0$.

Given two functions $\sigma' \colon [\abs{C'}] \to C'$ and $\hat \sigma \colon [\abs{\hat C_1} + \abs{\hat C_2}] \to \hat C_1 \sqcup \hat C_2$, define $\sigma' \boxplus \hat \sigma \colon [\abs{C}] \to C$ to be the function given by
\[
(\sigma' \boxplus \hat \sigma)(i) = \begin{cases}
\sigma'(i) & i \leq \abs{C'} \\
\phi(\hat \sigma(i - \abs{C'})) & i > \abs{C'},
\end{cases}\]
where $\phi$ is the natural map $\hat C_1 \sqcup \hat C_2 \to C_1 \cup C_2$.  In other words, if $\sigma'$ is an ordering of $C'$ and $\hat \sigma$ is an ordering of $\hat C_1 \sqcup \hat C_2$, then $\sigma' \boxplus \hat \sigma$ is the ordering of $C$ that concatenates $\sigma'$ and $\hat{\sigma}$ (when viewed as tuples).
Now take an ordered pair $\hat U = (\hat u_1, \hat u_2) \in \RR^d \times \RR^d$ and a multiset $V' \subseteq \RR^d$ of size $k$, and let $u_1 = R^3 e_1 + R\hat u_1$, $u_2 = -R^3 e_1 - R\hat u_2$, and $V = V' \cup \set{u_1,u_2}$. The lemma will follow from the statement that $\Sigma_{V}^C = \Sigma_{V'}^{C'} \boxplus \hat \Sigma_{\hat U}^{\hat C_1,\hat C_2}$ for all sufficiently large $R$. 

We now estimate $D_V(c)$ in three different ways, according to which of $C', C_1, C_2$ contains $c$.  First, for $c \in C'$, we have
\[D_{V}(c) = D_{V'}(c) + \Abs{R^3e_1 + R\hat u_1 - c} + \Abs{R^3 e_1 + R\hat u_2 + c}.\]
Since $R^3 e_1 + R\hat u_1 - c=[R^3 + (R\hat u_1-\hat c) \cdot e_1]e_1+O(R)$, we have
\begin{align*}\Abs{R^3e_1 + R\hat u_1 - c} &= \sqrt{(R^3 + (R\hat u_1-c) \cdot e_1)^2 + O(R^2)} \\ &= (R^3 + (R\hat u_1-c) \cdot e_1)\sqrt{1 + O(R^{-4})} \\ &= R^3 + (R\hat u_1-c) \cdot e_1 + O(R^{-1}).
\end{align*}
Similarly,
\[\Abs{R^3 e_1 + R\hat u_2 + c} = R^3 + (R\hat u_2 + c)\cdot e_1 + O(R^{-1}),\]
so
\[
D_{V}(c) = 2R^3 + R(\hat u_1 + \hat u_2) \cdot e_1 + D_{V'}(c) + O(R^{-1}).
\]
Now, for $\hat c \in \hat C_1$, we compute
\begin{align*}
D_{V}(R^3e_1 + R\hat c) &= \Abs{R^3 e_1 + R\hat c - R^3e_1 - R\hat u_1} + \Abs{R^3 e_1 + R\hat c + R^3 e_1 + R\hat u_2} + D_{V'}(R^3e_1 + Rc) \\
&= R\Abs{ \hat c - \hat u_1} + \Abs{2R^3 e_1 + R(\hat c + \hat u_2)} + k\Abs{R^3 \hat e_1 + R\hat c} + O(1).
\end{align*}
As above, we can estimate \[\Abs{2R^3e_1 + R(\hat c + \hat u_2)}=2R^3 + R(\hat c + \hat u_2) \cdot e_1 + O(R^{-1})\quad\text{and}\quad\Abs{R^3 e_1 + R\hat c} = R^3 e_1 + R\hat c \cdot e_1 + O(R^{-1}).\]  So
\[
D_{V}(R^3e_1 + R\hat c) = (k+2)R^3 + R\hat D^1_{k,\hat U}(\hat c) + O(1).
\]
Similarly, for $\hat c \in \hat C_2$, we have
\[
D_{V}(-R^3e_1 - R\hat c) = (k+2)R^3 + R\hat D^2_{k,\hat U}(\hat c) + O(1).
\]
Finally, comparing these estimates, we see that for large enough $R$, the quantities $D_V(c)$ for $c \in C'$ are all smaller than the quantities $D_V(c)$ for $c \in C_1 \cup C_2$ (here using the assumption that $C'$ is empty if $k=0$); moreover, the relative order of the points in $C'$ is $\sigma'$, and the relative order of the points in $C_1 \cup C_2$ is $\phi \circ \hat \sigma$.
\end{proof}
\section{The lower bound in \texorpdfstring{\cref{thm:main}}{Theorem \ref{thm:main}}: \texorpdfstring{$d = 1$}{d = 1}} \label{sec:lowerd1}
To prove \cref{thm:main} when $d=1$, it remains only to prove \cref{lem:biexistd1}.
\begin{proof}[Proof of \cref{lem:biexistd1}]
Let $a_1,\ldots,a_m$ and $b_1,\ldots,b_m$ be real numbers that are sufficiently generic that the $m^2$ differences $a_i - b_j$ for $i, j \in [m]$ are all distinct (for instance, $a_i = i/m$ and $b_i = i$ will work). Now let $R$ be a real number, and define the sets
\begin{align*}
\hat C_1 &= \set{k(R+a_1),\ldots,k(R+a_m),k(3R + a_1),\ldots,k(3R + a_m)}, \\
\hat C_2 &= \set{(k+2)(R+b_1),\ldots,(k+2)(R+b_m),2(k+2)R+k(R+b_1),\ldots,2(k+2)R+k(R+b_m)}.
\end{align*}
We claim that for large $R$, we have $\hat \psi_k(\hat C_1, \hat C_2) \geq m^4$.

For each fixed choice of $w_1,w_2 \in \RR$, let $\hat U = (k(k+2)w_1, 2(k+2)R+k(k+2)w_2)$. Then, for sufficiently large $R$ (depending on $w_1,w_2$), we may compute
\begin{align*}
\hat D^1_{k,\hat U}(k(R + a_i)) &= k(k+2)(R + a_i)-k(k+2)w_1+2(k+2)R+k(k+2)w_2, \\
\hat D^1_{k,\hat U}(k(3R + a_i)) &= k(k+2)(3R + a_i)-k(k+2)w_1+2(k+2)R+k(k+2)w_2, \\
\hat D^2_{k,\hat U}((k+2)(R + b_i)) &= k(k+2)(R + b_i)+k(k+2)w_1+2(k+2)R+k(k+2)w_2, \\
\hat D^2_{k,\hat U}(2(k+2)R+k(R+b_i)) &= k(k+2)(3R + b_i)+k(k+2)w_1+2(k+2)R-k(k+2)w_2.
\end{align*}
Canceling many common terms, the relative order of the $2m$ numbers $\hat D^1_{k,\hat U}(k(R + a_i))$ and ${\hat D^2_{k,\hat U}((k+2)(R + b_i))}$ is given by the relative order of the numbers
\[a_1 - w_1, \ldots, a_m - w_1, b_1 + w_1, \ldots, b_m + w_1.\]
Since the differences $a_i - b_j$ are distinct, these numbers assume $m^2 + 1 \geq m^2$ different orderings as $w_1$ varies.

At the same time, the relative order of the numbers $\hat D^1_{k,\hat U}(k(3R + a_i))$ and $\hat D^2_{k,\hat U}(2(k+2)R+k(R+b_i))$ is given by the relative order of
\[a_1 - (w_1 - w_2), \ldots, a_m - (w_1 - w_2), b_1 + (w_1 - w_2),\ldots, b_m + (w_1 - w_2).\]
As $w_1 - w_2$ varies, these numbers also assume at least $m^2$ orderings. Since $w_1$ and $w_1 - w_2$ can vary independently, we obtain at least $m^4$ distinct elements of $\hat \Psi_k(\hat C_1, \hat C_2)$.  This concludes the proof.
\end{proof}

\begin{figure}[tbp]
\begin{center}{\includegraphics[height=5.574cm]{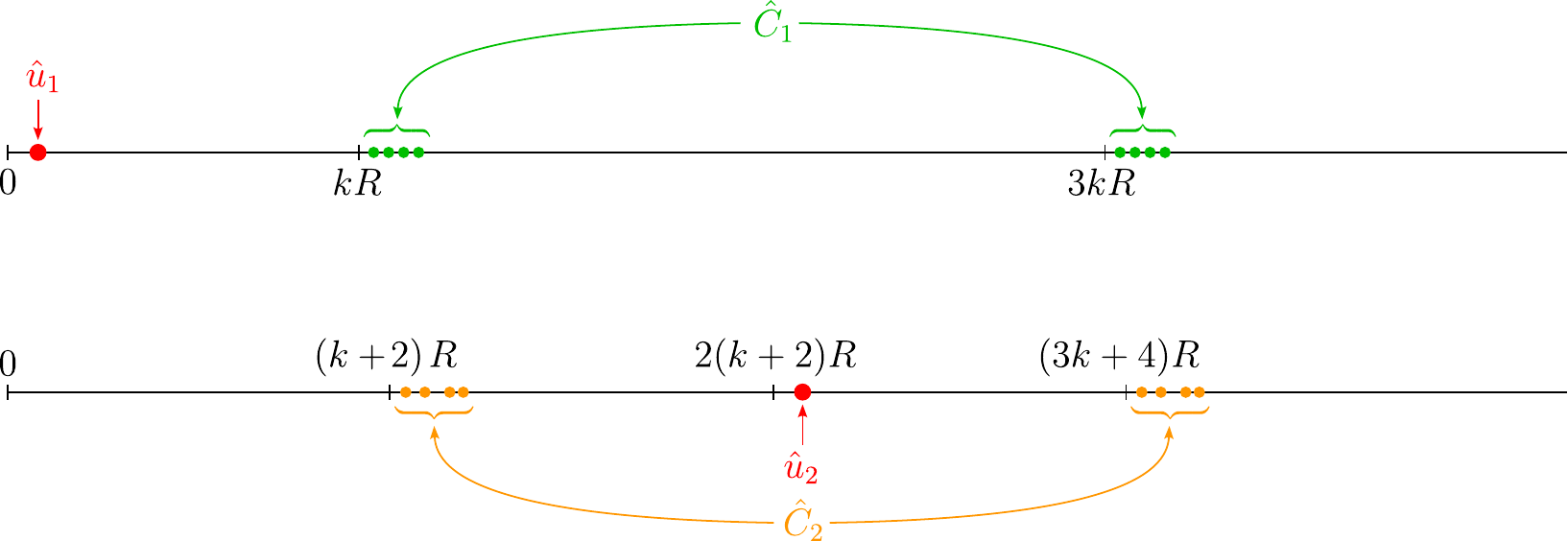}}
\end{center}
\caption{A schematic illustration of the construction in \Cref{lem:biexistd1}.} 
\label{fig:section5}
\end{figure}

\section{The lower bound in \texorpdfstring{\cref{thm:main}}{Theorem \ref{thm:main}}: \texorpdfstring{$d \geq 2$}{d ≥ 2}} \label{sec:lowerdg2}
We now pick up where we left off at the end of \cref{sec:lower}.
\subsection{A further reduction}
In this section, we reduce \cref{lem:biexist} to a problem involving simpler distance functions.
\begin{definition}\label{def:check}
Let $d \geq 2$ be a positive integer. Given a quadruple \[\check V = (\check v_1, \check v_2, \check x, \check y) \in \RR^{d-1} \times \RR^{d-1} \times \RR \times \RR_{>0}\] and a point $\check c \in \RR^{d-1}$, let
\[\check D^1_{\check V}(\check c) = \sqrt{\check x^2 + \Abs{\check c - \check v_1}^2} - \check x \quad\text{and}\quad \check D^2_{\check V}(\check c) = \check y\Abs{\check c - \check v_2}^2.\]
For two sets $\check C_1,\check C_2 \subseteq \RR^{d-1}$, define $\check D_{\check V} \colon \check C_1 \sqcup \check C_2 \to \RR$ to be the function that equals $\check D_{\check V}^1$ on $\check C_1$ and equals $\check D_{\check 
V}^2$ on $\check C_2$.  Let $\check \Sigma_{\check V}^{\check C_1,\check C_2} \colon [\abs{\check C_1} + \abs{\check C_2}] \to \check C_1 \sqcup \check C_2$ be the function such that $\check D_{\check V} \circ \check \Sigma_{\check V}^{\check C_1,\check C_2}$ is increasing (if such a function exists). Let $\check \Psi(\check C_1,\check C_2)$ be the set of all possible $\check \Sigma_{\check V}^{\check C_1,\check C_2}$ as $\check V$ varies, and let $\check \psi(\check C_1,\check C_2) = \abs{\check \Psi(\check C_1,\check C_2)}$.
\end{definition}
As in \cref{def:hatpsi}, the first two functions in \cref{def:check} can be viewed as effective distance functions for a suitable infinite separation limit.  The setup of this infinite separation limit is as described in the statement of the following lemma.

\begin{lemma}\label{lem:thing}
Let $k \geq 0$ and $d \geq 2$ be integers, and let $\check C_1, \check C_2 \subseteq \RR^{d-1}$ be finite point sets.  For sufficiently large $R > 0$, we have $\hat \psi_k(\set{0} \times \check C_1, \set{0} \times R\check C_2) \geq \check \psi(\check C_1, \check C_2)$.
\end{lemma}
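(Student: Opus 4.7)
The plan is to construct explicitly, for each quadruple $\check V$, a corresponding pair $\hat U$ depending on $R$ such that in the large-$R$ limit, the effective hat-distance functions reduce (up to a common additive constant) to the check-distance functions. Writing a point of $\RR^d$ as $(t,\check z)\in\RR\times\RR^{d-1}$ and given $\check V=(\check v_1,\check v_2,\check x,\check y)$, I would take
\[\hat u_1 = (\check x,\check v_1), \qquad \hat u_2 = \paren*{\tfrac{R^2}{2\check y},\,R\check v_2}.\]
A direct computation from \cref{def:check,def:hatpsi} gives, for $\check c\in\check C_1$,
\[\hat D^1_{k,\hat U}((0,\check c)) = \sqrt{\check x^2 + \Abs{\check c-\check v_1}^2} + \tfrac{R^2}{2\check y} = \check D^1_{\check V}(\check c) + \check x + \tfrac{R^2}{2\check y},\]
which is exact. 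For $\check c\in\check C_2$, I would factor out $R^2/(2\check y)$ from the square root and expand
\[\sqrt{\tfrac{R^4}{4\check y^2} + R^2\Abs{\check c-\check v_2}^2} = \tfrac{R^2}{2\check y}\sqrt{1+\tfrac{4\check y^2\Abs{\check c-\check v_2}^2}{R^2}}\]
via $\sqrt{1+\epsilon}=1+\epsilon/2+O(\epsilon^2)$ to obtain
\[\hat D^2_{k,\hat U}((0,R\check c)) = \check D^2_{\check V}(\check c) + \check x + \tfrac{R^2}{2\check y} + E(\check V,\check c, R),\]
where the error $E$ satisfies $|E(\check V,\check c,R)|\leq C(\check V,\check c)R^{-2}$ for all sufficiently large $R$.

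The crucial observation is that the additive constant $\check x + R^2/(2\check y)$ is identical in the two formulas. Consequently, under the natural bijection $(\{0\}\times\check C_1)\sqcup(\{0\}\times R\check C_2)\to\check C_1\sqcup\check C_2$, the ordering induced by $\hat D_{k,\hat U}$ differs from the ordering induced by $\check D_{\check V}$ only through the $O(R^{-2})$ perturbation on the $\check C_2$-part; in particular, this choice of $\hat U$ turns a distinguishing $\check V$ into a distinguishing $\hat U$ as $R\to\infty$, and the two orderings then coincide.

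To conclude, I would pick a witnessing quadruple $\check V_{\check\sigma}$ for each $\check\sigma\in\check\Psi(\check C_1,\check C_2)$; this is a finite collection since $\check\Psi(\check C_1,\check C_2)$ is a finite set of permutations. For each $\check\sigma$, the minimum gap $\epsilon_{\check\sigma}$ between consecutive values of $\check D_{\check V_{\check\sigma}}$ on $\check C_1\sqcup\check C_2$ is strictly positive. Taking $R$ large enough so that $|E(\check V_{\check\sigma},\check c,R)|<\epsilon_{\check\sigma}/2$ for every $\check\sigma$ and every $\check c\in\check C_2$ (only finitely many conditions to satisfy) ensures that the hat-ordering produced by the associated $\hat U_{\check\sigma}$ matches $\check\sigma$ under the bijection, so distinct $\check\sigma$ yield distinct hat-orderings and hence $\hat\psi_k(\{0\}\times\check C_1,\{0\}\times R\check C_2)\geq\check\psi(\check C_1,\check C_2)$. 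The only real obstacle I anticipate is bookkeeping: one must verify that the two additive constants cancel exactly. This is precisely what dictates the choice of axial coordinate $t_2 = R^2/(2\check y)$ for $\hat u_2$, which simultaneously produces the leading-order $\check y\Abs{\check c-\check v_2}^2$ term matching $\check D^2$ and makes the constant agree with the one coming from the $\check D^1$ side.
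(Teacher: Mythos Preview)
Your proof is correct and follows essentially the same approach as the paper: the same choice $\hat u_1=(\check x,\check v_1)$, $\hat u_2=(R^2/(2\check y),R\check v_2)$, the same exact identity on $\check C_1$, and the same Taylor expansion on $\check C_2$ yielding an $O(R^{-2})$ error relative to a common additive constant $\check x+R^2/(2\check y)$. Your final paragraph, selecting one witness $\check V_{\check\sigma}$ per ordering and using finiteness of $\check\Psi(\check C_1,\check C_2)$ to obtain a single $R$, just makes explicit the uniformity that the paper's ``for $R$ sufficiently large'' leaves implicit.
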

\begin{proof}
For each $\check V = (\check v_1,\check v_2,\check x,\check y) \in \RR^{d-1} \times \RR^{d-1} \times \RR \times \RR_{>0}$, define $\hat U = (((\check x, \check  v_1), (R^2/(2\check y), R\check v_2))$. Then, for $\check c \in \check C_1$, we have
\[\hat D^1_{k, \hat U}((0, \check c)) = \sqrt{\Abs{\check c - \check v_1}^2+x^2} + \frac{R^2}{2\check y} = \check D^1_{\check V}(\check c) + \frac{R^2}{2\check y} + \check x.\]
For $\check c \in \check C_2$, we have
\begin{multline*}
\hat D^2_{k, \hat U}((0, R\check c)) = \sqrt{R^2\Abs{\check c - \check v_2}^2 + \frac{R^4}{4\check y^2}} + \check x = \frac{R^2}{2\check y} \sqrt{1 + \frac{4\check y^2\Abs{\check c - \check v_2}^2}{R^2}} + \check x \\
= \frac{R^2}{2\check y}\paren*{1 + \frac{4\check y^2\Abs{\check c - \check v_2}^2}{2R^2} + O(R^{-4})} + \check x 
= \check D_{\check V}^2(\check c) + \frac{R^2}{2\check y} + \check x + O(R^{-2}).
\end{multline*}
In particular, if $R$ is sufficiently large, then the relative order of the quantities $\hat D^1_{k, \hat U}((0, \check c))$ (for $\check c \in \check C_1$) and $\hat D^2_{k, \hat U}((0, R\check c))$ (for $\check c \in \check C_2$) is the same as the relative order of the quantities $\check D^1_{\check V}(\check c)$ (for $\check c \in \check C_1$) and $\check D^2_{\check V}(\check c)$ (for $\check c \in \check C_2$).  This concludes the proof.
\end{proof}
This lemma shows that in order to prove \cref{lem:biexist}, it remains only to find sets $\check C_1,\check C_2 \subseteq \RR^{d-1}$ each of size $m$ such that $\check\psi(\check C_1,\check C_2) = \Omega_{d}(m^{4d})$.

\subsection{The final construction}
Since the remainder of the argument is concerned with only the ``check'' variables and functions, we will dispense with diacritics except for $\check \psi$, $\check \Psi$, $\check D_V^i$, and $\check \Sigma_V^{C_1,C_2}$.

The goal of this section is to prove the following lemma.
\begin{lemma} \label{lem:psicc}
For $d \geq 2$ and $C \subseteq \RR^{d-1}$ finite, we have $\check \psi(C, C) \geq \binom{\abs{C}}{2}^2 \binom{\psi_1(C)}{2}$.
\end{lemma}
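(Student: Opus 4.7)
The plan is to show that for each unordered pair of distinct orderings $\set{\pi_1, \pi_2} \subseteq \Psi_1(C)$, there exist at least $\binom{\abs{C}}{2}^2$ distinct elements of $\check \Psi(C, C)$ whose restrictions to $C_1$ and $C_2$ form this pair (in some order). Since these ``side orderings'' are invariants of any element of $\check \Psi(C, C)$, the families produced by different unordered pairs are disjoint, and summing over the $\binom{\psi_1(C)}{2}$ such pairs yields the claimed bound.

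For a fixed pair, I would realize $\pi_1, \pi_2$ as the $\psi_1$-orderings from chosen vantage points $v_1, v_2 \in \RR^{d-1}$, and consider the quadruples $V = (v_1, v_2, x, y)$ with free parameters $(x, y) \in \RR \times \RR_{>0}$. Because $\check D^1_V(c)$ is a strictly increasing function of $\Abs{c - v_1}$ and $\check D^2_V(c)$ is a strictly increasing function of $\Abs{c - v_2}$, the side orderings of $C_1$ and $C_2$ are always $\pi_1$ and $\pi_2$ regardless of $(x, y)$; only the interleaving of $C_1 \sqcup C_2$ depends on these two parameters. Writing $s_i = \Abs{c_{\pi_1(i)} - v_1}$ and $t_j = \Abs{c_{\pi_2(j)} - v_2}$, the interleaving is determined by the sign pattern of the $\abs{C}^2$ smooth curves $f_{ij}(x, y) \coloneqq \sqrt{x^2 + s_i^2} - x - y t_j^2 = 0$ in the half-plane.

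A direct computation using the monotonicity of the ratio $[\sqrt{x^2 + s_i^2} - x]/[\sqrt{x^2 + s_{i'}^2} - x]$ in $x$ shows that two such curves with $i = i'$ or $j = j'$ are disjoint, and that two with $i < i'$ and $j < j'$ meet in exactly one point provided $t_j/t_{j'} > s_i/s_{i'}$ (and do not meet otherwise). Each curve is a monotonically decreasing ``line-like'' graph $y = [\sqrt{x^2 + s_i^2} - x]/t_j^2$, so by a standard Euler-characteristic argument for planar arrangements, the number of regions is $1 + \abs{C}^2 + I$, where $I$ is the number of realized intersections; each region corresponds to a distinct interleaving. It therefore suffices to produce $I \geq \binom{\abs{C}}{2}^2 - O(\abs{C}^2)$.

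The main obstacle is arranging this many intersections: the bound requires essentially all of the ratio inequalities $t_j/t_{j'} > s_i/s_{i'}$ (for $i < i'$, $j < j'$) to hold, whereas for generic $(v_1, v_2)$ only some do. A natural remedy is to exploit the freedom to shift $v_1, v_2$ within their respective cells — for instance, pushing $v_2$ far from $C$ in a direction compatible with $\pi_2$ forces all the $t_j$'s to be nearly equal, so every $t$-ratio is close to $1$ and exceeds every $s$-ratio. Pairs $\set{\pi_1, \pi_2}$ for which such a ``far'' placement is incompatible with both orderings simultaneously are the delicate cases, and may require either more careful joint tuning of $(v_1, v_2)$ or a direct construction bypassing the region count — for example, exhibiting $\binom{\abs{C}}{2}^2$ specific interleavings indexed by pairs of pairs in $\binom{[\abs{C}]}{2}^2$ and verifying their realizability for suitable $(x, y)$.
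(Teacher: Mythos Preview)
Your setup is on the right track—fixing $v_1,v_2$ realising $\pi_1,\pi_2$ and varying $(x,y)$ is exactly what the paper does—and your intersection condition is correct: two curves $f_{ij}=0$ and $f_{i'j'}=0$ with $i<i'$, $j<j'$ meet precisely when $s_{i'}^2/s_i^2 > t_{j'}^2/t_j^2$, so the number $I$ of crossings equals $\Gamma(v_1,v_2)\coloneqq\abs{\set{(a,b)\in\Delta(v_1)\times\Delta(v_2):a>b}}$ in the paper's notation. The obstacle you flag is therefore genuine: $\Gamma(v_1,v_2)$ can be much smaller than $\binom{\abs{C}}{2}^2$, and neither of your fixes works in general. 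Pushing $v_2$ far from $C$ forces $\pi_2$ to be one of the $O(\abs{C})$ ``orderings at infinity'' (projections to a direction), so most $\pi_2\in\Psi_1(C)$ are simply unreachable that way; and there is no reason the $\binom{\abs{C}}{2}^2$ specific interleavings you propose should all be realisable with the fixed assignment $(v_1,v_2)$.

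The idea you are missing is a symmetry trick that makes the obstacle disappear. For each unordered pair $\set{\pi_1,\pi_2}$ the paper considers \emph{both} assignments $(v_1,v_2)=(v_{\pi_1},v_{\pi_2})$ and $(v_1,v_2)=(v_{\pi_2},v_{\pi_1})$; the resulting families of orderings are disjoint because the side orderings on the two copies of $C$ are swapped. After a genericity argument (the ``good pair'' condition, established via a density/openness lemma) one has $\abs{\Delta(v_{\pi_1})}=\abs{\Delta(v_{\pi_2})}=\binom{\abs{C}}{2}$ and $\Delta(v_{\pi_1})\cap\Delta(v_{\pi_2})=\varnothing$, whence
\[
\Gamma(v_{\pi_1},v_{\pi_2})+\Gamma(v_{\pi_2},v_{\pi_1})=\binom{\abs{C}}{2}^2
\]
exactly. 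The paper then directly exhibits $\Gamma(v_1,v_2)$ distinct orderings for each assignment (via the monotonicity of $\vartheta_{a,b}$, without any region count), and the sum over unordered pairs gives the bound. So rather than trying to force $I$ to be large for one assignment, you should split the count between the two assignments—this is the missing step.
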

\cref{lem:biexist} can now be deduced by 
choosing $C$ to be some $m$-element subset of $\RR^{d-1}$ with $\psi_1(C) = \Theta_d(m^{2(d-1)})$; for the existence of such a set $C$, see the discussion in \Cref{sec:intro}.  For the rest of this section, fix a choice of $C \subseteq \RR^{d-1}$ ($d\geq 2$).

We start with some preliminary results that will help us understand $\check D^1_V$.  For $a > 0$, define the function $\vartheta_a \colon \RR \to \RR$ as $\vartheta_a(x) = \sqrt{x^2+a^2}-x$. For $a, b > 0$, define $\vartheta_{a,b}\colon \RR \cup \set{-\infty,+\infty} \to \RR$ by
\[\vartheta_{a,b}(x) = \begin{cases}
\vartheta_a(x)/\vartheta_b(x) & x \in \RR \\
1 & x = -\infty \\
a^2/b^2 & x = +\infty.
\end{cases}\]
We record several facts about $\vartheta_{a,b}$ that will be useful in the sequel.
\begin{lemma} \label{lem:theta}
Let $a > b > 0$.  Then $\vartheta_{a,b}$ is a continuous and strictly increasing function. Also, if $p \geq q > 0$ and $p/q \leq a^2/b^2$, then the unique $x \in \RR \cup \set{\pm\infty}$ satisfying $\vartheta_a(x)/\vartheta_b(x)= p/q$ is determined up to a sign by
\[x^2 = \frac{(a^2q^2-b^2p^2)^2}{4pq(p-q)(a^2q-b^2p)}.\]
(In particular, when $x = \pm \infty$, the numerator of the above fraction is nonzero, and the denominator is zero.)
\end{lemma}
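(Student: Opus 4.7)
The plan is to prove the continuity and monotonicity claim first, and then derive the explicit formula for $x^2$ by manipulating the equation $q\vartheta_a(x)=p\vartheta_b(x)$ and squaring twice. The workhorse identity throughout is $\vartheta_a(x)(\sqrt{x^2+a^2}+x)=a^2$, obtained by rationalization. Writing $h_a(x)\coloneqq\sqrt{x^2+a^2}+x > 0$, this gives $\vartheta_{a,b}(x)=(a^2/b^2)\cdot h_b(x)/h_a(x)$ for all $x\in\RR$, which is the form I would work with.

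For the first part, I would compute $(\log h_a)'(x)=1/\sqrt{x^2+a^2}$ and hence $(\log(h_b/h_a))'(x)=1/\sqrt{x^2+b^2}-1/\sqrt{x^2+a^2}$, which is strictly positive since $a>b$. So $\vartheta_{a,b}$ is strictly increasing on $\RR$. For continuity at the endpoints, routine asymptotics give $h_a(x)\sim a^2/(2\abs{x})$ as $x\to-\infty$ and $h_a(x)\sim 2x$ as $x\to+\infty$, yielding $\vartheta_{a,b}(x)\to 1$ and $\vartheta_{a,b}(x)\to a^2/b^2$ respectively, matching the defined endpoint values.

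For the explicit formula, I would rewrite $q\vartheta_a(x)=p\vartheta_b(x)$ as $q\sqrt{x^2+a^2}-p\sqrt{x^2+b^2}=(q-p)x$, square once to isolate $\sqrt{(x^2+a^2)(x^2+b^2)}$, and then square again. Expanding both sides, the $x^4$ terms cancel, leaving a linear equation in $x^2$. Simplifying the constant side uses the difference-of-squares factorization $(q^2a^2+p^2b^2)^2-(2pqab)^2=(a^2q^2-b^2p^2)^2$, which collapses the right-hand side to $(a^2q^2-b^2p^2)^2/(4p^2q^2)$; the remaining $x^2$-coefficient factors as $(p-q)(a^2q-b^2p)/(pq)$. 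Rearranging yields the stated formula. Since the formula determines only $x^2$, this matches the ``up to a sign'' phrasing, and uniqueness of the solution $x\in\RR\cup\set{-\infty,+\infty}$ follows from Part 1. For the parenthetical: Part 1 identifies $x=-\infty$ with $p=q$ (in which case the factor $p-q$ vanishes, while $a^2q^2-b^2p^2=q^2(a^2-b^2)\neq 0$) and $x=+\infty$ with $p/q=a^2/b^2$ (in which case $a^2q-b^2p=0$, while $a^2q^2-b^2p^2=b^2p(q-p)\neq 0$).

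I expect the main obstacle to be the algebraic bookkeeping across the double squaring, particularly recognizing the difference-of-squares identity responsible for the $x^4$ terms cancelling and the right-hand side collapsing to a perfect square. This structural observation is what forces the final equation to be a formula for $x^2$ rather than a genuine quartic in $x$; once it is in hand, everything else is routine manipulation.
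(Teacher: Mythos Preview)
Your proposal is correct and follows essentially the same approach as the paper: the paper computes $(\log\vartheta_a)'(x)=-1/\sqrt{x^2+a^2}$ directly rather than via the conjugate $h_a$, but since $\vartheta_a h_a=a^2$ this yields the identical derivative $1/\sqrt{x^2+b^2}-1/\sqrt{x^2+a^2}$ for $\log\vartheta_{a,b}$, and the endpoint asymptotics and double-squaring derivation of the $x^2$ formula are the same. The only difference is cosmetic.
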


\begin{proof}
It is clear that $\vartheta_a(x)$ is a smooth and positive function of $x$, so $\vartheta_a(x)/\vartheta_b(x)$ is also smooth and positive. To show that $\vartheta_a(x)/\vartheta_b(x)$ is strictly increasing on $\RR$, we start by noting that
\[\vartheta'_a(x) = \frac{x}{\sqrt{x^2 + a^2}} - 1 = -\frac{\vartheta_a(x)}{\sqrt{x^2+a^2}}.\]
Consequently,
\[\frac{d}{dx} \log \vartheta_a(x) = \frac{\vartheta'_a(x)}{\vartheta_a(x)} = -\frac{1}{\sqrt{x^2+a^2}}.\]
It follows that
\[\frac{d}{dx} \log \frac{\vartheta_a(x)}{\vartheta_b(x)} = \frac{1}{\sqrt{x^2+b^2}} - \frac{1}{\sqrt{x^2+a^2}} > 0,\]
which shows that $\vartheta_a(x)/\vartheta_b(x)$ is strictly increasing.

We now establish that $\vartheta_{a,b}$ is continuous at $x=\pm \infty$.  Combined with $\vartheta_{a,b}$ being strictly increasing on $\RR$, this also implies that $\vartheta_{a,b}$ is strictly increasing on its entire domain. In the limit $x \to -\infty$, we have $\vartheta_a(x)=(2+o_a(1))x$ and likewise for $\vartheta_b(x)$, so $\lim\limits_{x \to -\infty} \vartheta_a(x)/\vartheta_b(x) = 1$. In the limit $x \to \infty$, we Taylor-expand \[\vartheta_a(x)=x\paren*{\sqrt{1+(a/x)^2}-1}=x\paren*{\frac{1}{2} (a/x)^2+O((a/x)^4)} = \frac{a^2}{2x} + O_a(x^{-3}),\]
(and likewise for $\vartheta_b(x)$), and we see that $\lim\limits_{x \to \infty} \vartheta_a(x)/\vartheta_b(x) = a^2/b^2$.

It remains to derive the final formula of the lemma. The cases when $p/q \in \set{1,a^2/b^2}$ can be easily checked by hand. Now suppose that $x \in \RR$ and $\vartheta_a(x)/\vartheta_b(x)= p/q \in (1, a^2/b^2)$, which is equivalent to
\[q\sqrt{x^2 + a^2} - p\sqrt{x^2 + b^2} = (q-p)x.\]  Squaring both sides gives
\[(p^2+q^2)x^2 + a^2q^2 + b^2p^2 - 2pq\sqrt{(x^2+a^2)(x^2+b^2)} = (p-q)^2 x^2,\]
which can be rearranged to
\[2pq\sqrt{(x^2+a^2)(x^2+b^2)} = 2pqx^2 + (a^2q^2 + b^2p^2).
\]
Squaring again, we get
\[4p^2q^2 x^4 + 4(a^2 + b^2)p^2q^2x^2 + 4a^2b^2p^2q^2 = 4p^2q^2x^4 + 4(a^2q^2+b^2p^2)pqx^2 + (a^2q^2 + b^2p^2)^2,\]
which simplifies to
\[4pq(p-q)(a^2q-b^2p) x^2 = (a^2q^2-b^2p^2)^2.\]
Hence
\[x^2 = \frac{(a^2q^2-b^2p^2)^2}{4pq(p-q)(a^2q-b^2p)}.\]
(Note that $a^2q- b^2p\neq 0$ since $p/q < a^2/b^2$.)
\end{proof}

We now turn to the expressions appearing in $\check D^2_V$.  For $v \in \RR^{d-1}$, let \[\Delta(v) = \set{\Abs{v - c_1}^2/\Abs{v - c_2}^2 : c_1, c_2 \in C, \Abs{v - c_1} > \Abs{v - c_2}}.\] 
Say a pair of points $(v_1, v_2) \in (\RR^{d-1})^2$ is \dfn{good} if the following conditions hold:
\begin{itemize}
    \item $v_1$ and $v_2$ are not elements of $C$.
    \item $\Delta(v_1)$ and $\Delta(v_2)$ are disjoint and each have size $\binom{\abs{C}}{2}$.
    \item If $x \in \RR$ and $c_1, c_2, c_3, c_4 \in C$ are such that $(c_1,c_2) \neq (c_3,c_4)$, $\Abs{v_1 - c_1} > \Abs{v_1 - c_2}$, and $\Abs{v_1 - c_3} > \Abs{v_1 - c_4}$, then $\vartheta_{\Abs{v_1 - c_1},\Abs{v_1 - c_2}}(x)$ and $\vartheta_{\Abs{v_1 - c_3},\Abs{v_1 - c_4}}(x)$ are not both in $\Delta(v_2)$. 
\end{itemize}
The following lemma says that generic pairs are good; actually verifying all of the conditions for goodness is unfortunately somewhat tedious and notation-heavy.
\begin{lemma} \label{lem:denseopen}
The set of good pairs is dense and open in $(\RR^{d-1})^2$.
\end{lemma}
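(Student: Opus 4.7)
My plan is to express the complement of the set of good pairs as a finite union of subsets, each contained in the zero set of some nontrivial real-analytic (indeed, polynomial after clearing denominators) function on $(\RR^{d-1})^2$. Each such zero set is closed and nowhere dense, so a finite union of them is closed with empty interior, and the complement is open and dense.

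The first condition fails only on the finite union of affine subspaces of the form $\set{(v_1,v_2) : v_i = c}$ for $i \in \set{1,2}$ and $c \in C$, which is closed and nowhere dense. For the second condition, $\abs{\Delta(v_1)} < \binom{\abs C}{2}$ forces $\Abs{v_1-c_1}^2\Abs{v_1-c_4}^2 = \Abs{v_1-c_2}^2\Abs{v_1-c_3}^2$ for some distinct unordered pairs $\set{c_1,c_2}$ and $\set{c_3,c_4}$ in $C$. This is a polynomial equation in $v_1$ that is not identically zero: its solution set contains, in particular, all $v_1$ that are equidistant from both pairs, which is a proper affine subspace whenever the two pairs differ. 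The failure of disjointness $\Delta(v_1)\cap\Delta(v_2)\neq\emptyset$ is captured by analogous polynomial equations in the joint variable $(v_1,v_2)$, each of which is nontrivial since $v_1$ and $v_2$ can be varied independently.

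For the third condition, I would fix an ordered quadruple $(c_1,c_2,c_3,c_4)$ of elements of $C$ with $(c_1,c_2)\neq (c_3,c_4)$ and an ordered quadruple $(c_5,c_6,c_7,c_8)$, and consider the locus of $(v_1,v_2)$ where some $x\in\RR$ satisfies both $\vartheta_{\Abs{v_1-c_1},\Abs{v_1-c_2}}(x)=\Abs{v_2-c_5}^2/\Abs{v_2-c_6}^2$ and $\vartheta_{\Abs{v_1-c_3},\Abs{v_1-c_4}}(x)=\Abs{v_2-c_7}^2/\Abs{v_2-c_8}^2$. Applying the formula from \cref{lem:theta} to each equation gives two rational expressions for $x^2$ in the coordinates of $v_1,v_2$; equating them and clearing denominators yields a single polynomial identity in $(v_1,v_2)$. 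I would verify nontriviality of this polynomial by holding $v_1$ fixed at a generic point and varying $v_2$, using that when $\set{c_5,c_6}\neq\set{c_7,c_8}$ the two sides depend on $v_2$ through visibly different combinations of distances; in the remaining degenerate subcases (when $\set{c_5,c_6}=\set{c_7,c_8}$) I would instead vary $v_1$, noting that $(c_1,c_2)\neq (c_3,c_4)$ forces the dependence on $v_1$ to differ.

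The main obstacle I anticipate is the combinatorial bookkeeping needed to verify nontriviality across all the subcases of condition 3, since the eight points $c_1,\ldots,c_8$ are allowed to repeat in many patterns, and edge cases such as $x=\pm\infty$ (corresponding to $\vartheta$-values at the boundary of the image interval) must be handled separately using the closed-form values of $\vartheta_{a,b}$ at $\pm\infty$. Openness of the good set, by contrast, is essentially automatic, as every defining condition is phrased as a strict inequality or nonequality of functions that depend continuously on $(v_1,v_2)$ on the open set where the earlier conditions already hold.
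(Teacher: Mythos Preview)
Your overall strategy matches the paper's: decompose the bad set according to which condition fails and which indices witness the failure, and for the third condition eliminate $x$ using the formula in \cref{lem:theta} to obtain a polynomial equation in $(v_1,v_2)$ whose nontriviality is then checked by substitution. The paper carries out the nontriviality checks concretely (e.g., plugging in $v_1=c_\alpha$, then $v_2=c_\eps$, etc.), whereas you sketch a case split on whether $\set{c_5,c_6}=\set{c_7,c_8}$; these are essentially the same argument.

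There is, however, a real gap in your openness claim. You say openness is ``essentially automatic'' because each condition is a strict inequality or nonequality of functions continuous in $(v_1,v_2)$. This is not true for the third condition: its \emph{failure} is an existential statement over $x\in\RR$, so the failure locus is the projection to $(v_1,v_2)$ of a closed subset of $(v_1,v_2,x)$-space, and projections along a noncompact factor need not be closed. (Containment in a polynomial zero set, which is all your elimination of $x$ gives, does not help here either: it shows the good set contains a dense open set, not that it is one.) The paper deals with this by compactifying the $x$-variable: since $\vartheta_{a,b}$ extends continuously to $\RR\cup\set{\pm\infty}$, the relevant closed set lives in $\Omega\times(\RR\cup\set{\pm\infty})$, and projection from a product with a compact factor is a closed map. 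Your remark that the endpoints $x=\pm\infty$ ``must be handled separately'' is in the right spirit, but you invoke it only for the density computation, not for openness.

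A smaller point: in your second paragraph you argue that a certain polynomial in $v_1$ is not identically zero because ``its solution set contains, in particular, all $v_1$ that are equidistant from both pairs.'' Exhibiting zeros does not establish nontriviality; you need a point where the polynomial is nonzero. The paper does this by evaluating at $v_1=c_\alpha$ (and, if that happens to vanish, at $v_1=c_\beta$).
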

\begin{proof}
Let $C=\set{c_1, \ldots, c_m}$.  Define the following subsets of $(\RR^{d-1})^2$:
\begin{itemize}
    \item Let $\Omega$ be the set of $(v_1,v_2)$ such that $v_1,v_2$ do not lie in $C$.
    \item For distinct $\alpha,\beta \in [m]$, let $\Omega^{(1)}_{\alpha\beta} \subseteq \Omega$ be the set of $(v_1,v_2)$ such that $\Abs{v_1 - c_\alpha} \neq \Abs{v_1 - c_\beta}$.
    \item For distinct $\alpha,\beta \in [m]$, let $\Omega^{(2)}_{\alpha\beta} \subseteq \Omega$ be the set of $(v_1,v_2)$ such that $\Abs{v_2 - c_\alpha} \neq \Abs{v_2 - c_\beta}$.
    \item For $\alpha,\beta,\gamma,\delta\in [m]$ with $\alpha \neq \beta$, $\gamma \neq \delta$, and $(\alpha,\beta)\neq(\gamma,\delta)$, let $\Omega^{(3)}_{\alpha\beta\gamma\delta} \subseteq \Omega$ be the set of $(v_1,v_2)$ such that $\frac{\Abs{v_1 - c_\alpha}^2}{\Abs{v_1 - c_\beta}^2} \neq \frac{\Abs{v_1 - c_\gamma}^2}{\Abs{v_1 - c_\delta}^2}$.
    \item For $\alpha,\beta,\gamma,\delta\in [m]$ with $\alpha \neq \beta$, $\gamma \neq \delta$, and $(\alpha,\beta)\neq(\gamma,\delta)$, let $\Omega^{(4)}_{\alpha\beta\gamma\delta} \subseteq \Omega$ be the set of $(v_1,v_2)$ such that $\frac{\Abs{v_2 - c_\alpha}^2}{\Abs{v_2 - c_\beta}^2}\neq \frac{\Abs{v_2 - c_\gamma}^2}{\Abs{v_2 - c_\delta}^2}$.
    \item For $\alpha,\beta,\gamma,\delta\in [m]$ with $\alpha \neq \beta$ and $\gamma \neq \delta$, let $\Omega^{(5)}_{\alpha\beta\gamma\delta} \subseteq \Omega$ be the set of $(v_1,v_2)$ such that $\frac{\Abs{v_1 - c_\alpha}^2}{\Abs{v_1 - c_\beta}^2} \neq \frac{\Abs{v_2 - c_\gamma}^2}{\Abs{v_2 - c_\delta}^2}$.
    \item For $\alpha,\beta,\gamma,\delta,\eps,\zeta,\eta,\theta \in [m]$ with $\alpha \neq \beta$, $\gamma \neq \delta$, $\eps \neq \zeta$, $\eta \neq \theta$, and $\set{\alpha,\beta} \neq \set{\gamma,\delta}$, let $\Omega^{(6)}_{\alpha\beta\gamma\delta\eps\zeta\eta\theta} \subseteq \Omega$ be the set of $(v_1,v_2)$ such that there does not exist an $x \in \RR \cup \set{\pm \infty}$ with
    \[ \vartheta_{\Abs{v_1-c_\alpha},\Abs{v_1-c_\beta}}(x) = \frac{\Abs{v_2 - c_\eps}^2}{\Abs{v_2 - c_\zeta}^2}\quad\text{and}\quad\vartheta_{\Abs{v_1-c_\gamma},\Abs{v_1-c_\delta}}(x) = \frac{\Abs{v_2 - c_\eta}^2}{\Abs{v_2 - c_\theta}^2}.\tag{$\divideontimes$}\label{eq:omega-6}\]
\end{itemize}

We first claim that the intersection of all these sets is precisely the set of good pairs. First of all, the first condition in the definition of a good pair is precisely the condition that the pair is in $\Omega$; we will henceforth work within this set. Since at most one of $\frac{\Abs{v_1 - c_\alpha}^2}{\Abs{v_1 - c_\beta}^2}$ or $\frac{\Abs{v_1 - c_\beta}^2}{\Abs{v_1 - c_\alpha}^2}$ can be greater than $1$, the set $\Delta(v_1)$ has size $\binom{m}{2}$ if and only if the $m$ numbers $\Abs{v_1 - c_\alpha}^2$ for $\alpha \in [m]$ are all distinct and nonzero and their non-unit ratios are all distinct; the latter condition is exactly equivalent to $(v_1,v_2)$ lying in $\Omega^{(1)}_{\alpha\beta}$ and $\Omega^{(3)}_{\alpha\beta\gamma\delta}$ for all $\alpha,\beta,\gamma,\delta$. Similarly, the condition $\abs{\Delta(v_2)} = \binom{m}{2}$ is equivalent to $(v_1,v_2)$ lying in $\Omega^{(2)}_{\alpha\beta}$ and $\Omega^{(4)}_{\alpha\beta\gamma\delta}$ for all $\alpha,\beta,\gamma,\delta$. Now, assuming that $\abs{\Delta(v_1)} = \abs{\Delta(v_2)} = \binom{m}{2}$, the condition that $\Delta(v_1) \cap \Delta(v_2) = \varnothing$ is equivalent to $(v_1,v_2)$ lying in $\Omega^{(5)}_{\alpha\beta\gamma\delta}$ for all $\alpha,\beta,\gamma,\delta$, since as $\alpha,\beta,\gamma,\delta$ vary, the sets of values attained by the quantities $\frac{\Abs{v_1 - c_\alpha}^2}{\Abs{v_1 - c_\beta}^2}$ and $\frac{\Abs{v_2 - c_\gamma}^2}{\Abs{v_2 - c_\delta}^2}$ are given by $\Delta(v_1) \cup \set{1/t : t \in \Delta(v_1)}$ and $\Delta(v_2) \cup \set{1/t : t \in \Delta(v_2)}$, respectively.

To finish justifying the claim, we must show that, under the assumption of the first two conditions in the definition of goodness, the third condition also holds if and only if $(v_1,v_2)$ lies in all of the sets $\Omega^{(6)}_{\alpha\beta\gamma\delta\eps\zeta\eta\theta}$. The ``if'' direction is clear. For the converse, suppose $x \in \RR \cup \set{-\infty, \infty}$ and the indices $\alpha,\beta,\gamma,\delta,\eps,\zeta,\eta,\theta$ satisfy \eqref{eq:omega-6}.
We cannot have $x=\pm\infty$ since we assumed that the second condition of goodness holds. After possibly swapping $(\alpha,\eps)$ with $(\beta,\zeta)$ and/or swapping $(\gamma,\eta)$ with $(\delta,\theta)$, we may assume that $\frac{\Abs{v_2 - c_\eps}^2}{\Abs{v_2 - c_\zeta}^2}$ and $\frac{\Abs{v_2 - c_\eta}^2}{\Abs{v_2 - c_\theta}^2}$ are greater than $1$.  Then both of these quantities lie in $\Delta(v_2)$, so $(v_1,v_2)$ is not good.

Since finite intersections of dense and open subsets are dense and open, it suffices to show that each of the sets defined above is dense and open. This is obvious for $\Omega$. Moreover, since open subsets of open subspaces are open and dense subsets of dense subspaces are dense, it suffices to show that the rest of the sets are dense and open subsets of $\Omega$.

The openness of $\Omega^{(1)}_{\alpha\beta}$, $\Omega^{(2)}_{\alpha\beta}$, $\Omega^{(3)}_{\alpha\beta\gamma\delta}$, $\Omega^{(4)}_{\alpha\beta\gamma\delta}$, and $\Omega^{(5)}_{\alpha\beta\gamma\delta}$ is clear, as each can be easily expressed as the preimage of the open set $\RR^2 \setminus \set{(x,x):x \in \RR} \subseteq \RR^2$ under a suitable continuous map. To see that $\Omega^{(6)}_{\alpha\beta\gamma\delta\eps\zeta\eta\theta}$ is open, consider the set $\tilde \Omega \subseteq \Omega \times (\RR \cup \set{\pm\infty})$ (we omit indices for brevity) consisting of the triples $(v_1,v_2, x)$ satisfying \eqref{eq:omega-6}.
This is a closed subset since $\vartheta_{a,b}$ is continuous. Also, $\Omega^{(6)}_{\alpha\beta\gamma\delta\eps\zeta\eta\theta}$  is the complement of the image of $\tilde \Omega$ under the projection map $\Omega \times (\RR \cup \set{\pm\infty}) \to \Omega$. The openness of $\Omega^{(6)}_{\alpha\beta\gamma\delta\eps\zeta\eta\theta}$ now follows from the well-known fact that for topological spaces $X$ and $Y$ with $Y$ compact, the projection map $X \times Y \to X$ is closed.

We now check denseness.  This is obvious for $\Omega^{(1)}_{\alpha\beta}$ and $\Omega^{(2)}_{\alpha\beta}$. For $\Omega^{(3)}_{\alpha\beta\gamma\delta}$, it suffices to show that
\[\Abs{v_1 - c_\alpha}^2\Abs{v_1 - c_\delta}^2 - \Abs{v_1 - c_\beta}^2\Abs{v_1 - c_\gamma}^2\]
is not the zero polynomial in $v_1$. To see this, note that setting $v_1=c_\alpha$ gives $\Abs{c_\alpha-c_\beta}^2\Abs{c_\alpha-c_\gamma}^2$, which is nonzero unless $\alpha = \gamma$. Similarly, setting $v_1 = c_\beta$ yields a nonzero value unless $\beta = \delta$. But these cannot both occur due to our assumption that $(\alpha, \beta) \neq (\gamma, \delta)$. The argument for $\Omega^{(4)}_{\alpha\beta\gamma\delta}$ is identical. To show that $\Omega^{(5)}_{\alpha\beta\gamma\delta}$ is dense, it suffices to check that
\[\Abs{v_1 - c_\alpha}^2\Abs{v_2 - c_\delta}^2 - \Abs{v_1 - c_\beta}^2\Abs{v_2 - c_\gamma}^2\]
is not the zero polynomial in $v_1,v_2$; this is true since setting $v_2 = c_\gamma$ yields $\Abs{c_\gamma - c_\delta}^2 \Abs{v_1 - c_\alpha}^2$, which is clearly nonzero.

It remains to handle $\Omega^{(6)}_{\alpha\beta\gamma\delta\eps\zeta\eta\theta}$. If $(v_1,v_2,x)$ satisfies \eqref{eq:omega-6}, then two applications of the last part of \cref{lem:theta} yield that
\[\frac{(r_{1\alpha}r_{2\zeta}^2-r_{1\beta}r_{2\eps}^2)^2}{r_{2\eps}r_{2\zeta}(r_{2\eps}-r_{2\zeta})(r_{1\alpha}r_{2\zeta}-r_{1\beta}r_{2\eps})} = \frac{(r_{1\gamma}r_{2\theta}^2-r_{1\delta}r_{2\eta}^2)^2}{r_{2\eta}r_{2\theta}(r_{2\eta}-r_{2\theta})(r_{1\gamma}r_{2\theta}-r_{1\delta}r_{2\eta})},\]
where we abbreviate $r_{i\mu} = \Abs{v_i - c_\mu}^2$. Clearing denominators (which is still fine when $x = \pm \infty$), we get
\begin{multline*}
r_{2\eta}r_{2\theta}(r_{2\eta}-r_{2\theta})(r_{1\gamma}r_{2\theta}-r_{1\delta}r_{2\eta})(r_{1\alpha}r_{2\zeta}^2-r_{1\beta}r_{2\eps}^2)^2 \\ = r_{2\eps}r_{2\zeta}(r_{2\eps}-r_{2\zeta})(r_{1\alpha}r_{2\zeta}-r_{1\beta}r_{2\eps})(r_{1\gamma}r_{2\theta}^2-r_{1\delta}r_{2\eta}^2)^2.
\end{multline*}
It will suffice to show that these are not equal as polynomials in $v_1,v_2$.  Assume instead that these two polynomials are equal. The set of $v_2$ such that the left-hand side is zero for all $v_1$ is precisely the set consisting of $c_\eta$, $c_\theta$, and the points equidistant from $c_\eta$ and $c_\theta$.\footnote{Since $\alpha \neq \beta$, the polynomials $r_{1\alpha}$ and $r_{1\beta}$ are linearly independent, so no nontrivial linear combination of them is the zero polynomial. Applying similar reasoning to $r_{1\gamma}$ and $r_{1\delta}$, we conclude that $(r_{1\gamma}r_{2\theta}-r_{1\delta}r_{2\eta})(r_{1\alpha}r_{2\zeta}^2-r_{1\beta}r_{2\eps}^2)^2$ can never be the zero polynomial in $v_1$ for any value of $v_2$.} Analyzing the right-hand side likewise, we conclude that $\set{c_\eta, c_\theta} = \set{c_\eps, c_\zeta}$.  Without loss of generality, we have $\eps = \eta$ and $\zeta = \theta$. Now, after canceling factors that appear on both sides of our equation, we get
\[(r_{1\gamma}r_{2\zeta}-r_{1\delta}r_{2\eps})(r_{1\alpha}r_{2\zeta}^2-r_{1\beta}r_{2\eps}^2)^2 = (r_{1\alpha}r_{2\zeta}-r_{1\beta}r_{2\eps})(r_{1\gamma}r_{2\zeta}^2-r_{1\delta}r_{2\eps}^2)^2.\]
Plugging in $v_2 = c_\eps$ yields
\[\Abs{c_\eps - c_\zeta}^{10} r_{1\gamma}r_{1\alpha}^2 = \Abs{c_\eps - c_\zeta}^{10} r_{1\alpha}r_{1\gamma}^2.\]
So $\alpha = \gamma$. Similarly, plugging in $v_2 = c_\zeta$ implies $\beta = \delta$. This yields the desired contradiction.
\end{proof}

For the next step, we fix $v_1, v_2 \in \RR^{d-1}$ and consider $\check \Sigma^{C,C}_{(v_1,v_2,x,y)}$ as $x$ and $y$ vary. Let us write $\Gamma(v_1,v_2)= \abs{\set{(a,b)\in \Delta(v_1) \times \Delta(v_2) : a > b}}$.
\begin{lemma} \label{lem:gdd}
If $(v_1, v_2)$ is good, then as $V$ varies over $\set{(v_1,v_2)} \times \RR \times \RR_{>0}$, there are at least $\Gamma(v_1, v_2)$ possibilities for the function $\check \Sigma^{C,C}_V$.
\end{lemma}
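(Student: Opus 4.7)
The plan is to reformulate the counting problem as enumerating cells in a planar arrangement. First, I would observe that the orderings within each of the two copies $C^{(1)}, C^{(2)}$ of $C$ are independent of $(x,y)$: since $\vartheta_a(x)-\vartheta_b(x)=\sqrt{x^2+a^2}-\sqrt{x^2+b^2}$ has the same sign as $a-b$, the relative order on $C^{(1)}$ induced by $\check D^1_V$ is always the order by $\Abs{c-v_1}$, and similarly (using $y>0$) the order on $C^{(2)}$ induced by $\check D^2_V$ is always the order by $\Abs{c-v_2}$. Therefore $\check\Sigma^{C,C}_V$ is fully determined by the interleaving of these two sorted lists, which in turn is determined by the $\abs{C}^2$ sign bits $\sgn\bigl(\vartheta_{\Abs{c-v_1}}(x)-y\Abs{c'-v_2}^2\bigr)$ for $(c,c')\in C\times C$.

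Next, I would geometrize these sign bits: for each pair $(c,c')\in C\times C$, let $L_{c,c'}$ be the graph of the function $x\mapsto \vartheta_{\Abs{c-v_1}}(x)/\Abs{c'-v_2}^2$ inside the open strip $\RR\times\RR_{>0}$. The sign bit above records whether $(x,y)$ lies above or below $L_{c,c'}$, so the full sign pattern---and hence the ordering---is constant on each connected component of the complement of the arrangement $\mathcal{A}=\bigcup_{c,c'\in C}L_{c,c'}$ in $\RR\times\RR_{>0}$. Different cells give different sign patterns, and different sign patterns give different interleavings and hence different orderings; so it suffices to lower-bound the number of cells of $\mathcal{A}$.

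The main step is to use the goodness hypothesis to verify that $\mathcal{A}$ is a generic simple-curve arrangement. First, any two distinct curves $L_{c,c'}$ and $L_{d,d'}$ meet at most once; this follows from the strict monotonicity of $\vartheta_{\Abs{c-v_1},\Abs{d-v_1}}$ from \cref{lem:theta}. Second, such a crossing exists precisely when, after possibly swapping the two labels, $\Abs{c-v_1}>\Abs{d-v_1}$, $\Abs{c'-v_2}>\Abs{d'-v_2}$, and $\Abs{c-v_1}^2/\Abs{d-v_1}^2>\Abs{c'-v_2}^2/\Abs{d'-v_2}^2$; this sets up a bijection between crossing pairs and the pairs counted by $\Gamma(v_1,v_2)$. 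Third, no three curves of $\mathcal{A}$ are concurrent: at any $x$ where two distinct pairs of curves cross simultaneously, two distinct elements of $\Delta(v_1)$ would have to simultaneously equal elements of $\Delta(v_2)$ at that value of $x$, contradicting the third condition in the definition of a good pair.

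Combining these facts with the standard incremental count for arrangements of simple curves (each new curve crossing $k$ old ones contributes $k+1$ new cells), the arrangement $\mathcal{A}$ has exactly $1+\abs{C}^2+\Gamma(v_1,v_2)$ cells, which is at least $\Gamma(v_1,v_2)$. The main obstacle I anticipate is the second fact above: one needs to carefully handle the asymmetry between $\Delta(v_1)$ (whose elements are limiting values of $\vartheta_{a}(x)/\vartheta_b(x)$ as $x\to\infty$) and $\Delta(v_2)$ (whose elements are fixed ratios of squared norms) when matching crossings with $\Gamma$-quadruples, and verify that each unordered pair of intersecting curves yields a unique $\Gamma$-quadruple after the normalizing swap.
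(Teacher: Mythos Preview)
Your approach is correct and takes a genuinely different route from the paper's, though both rest on the same two ingredients: strict monotonicity of $\vartheta_{a,b}$ (\cref{lem:theta}) and the third goodness condition. The paper argues constructively: it lists the $\Gamma(v_1,v_2)$ crossing abscissae $x_\xi$, chooses points $x'_j$ between consecutive ones, and for each $j$ exhibits a $y_j$ realizing the four-term chain $\check D^1(c_2^{(j)})<\check D^2(c_4^{(j)})<\check D^2(c_3^{(j)})<\check D^1(c_1^{(j)})$; distinctness of the $N$ orderings follows because this particular chain provably fails at every earlier $x'_{j'}$. You instead count cells in the arrangement of the $\abs{C}^2$ graphs $y=\vartheta_{\Abs{c-v_1}}(x)/\Abs{c'-v_2}^2$. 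Your route yields the sharper count $1+\abs{C}^2+\Gamma(v_1,v_2)$ and a cleaner global picture, but it requires one step that you assert without proof: that distinct cells carry distinct sign patterns. This does hold here, because the curves are graphs of functions any two of which cross at most once with a genuine sign change; hence each sign-pattern region has the form $\{(x,y):x\in I,\ g(x)<y<h(x)\}$ for an interval $I$ (the set where $f_i<f_j$ for every $i$ with $\epsilon_i=+$ and $j$ with $\epsilon_j=-$ is an intersection of half-lines) and is therefore connected. The paper's explicit construction sidesteps this topological point entirely, at the cost of not revealing the full cell structure.
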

\begin{proof}
Let $N = \Gamma(v_1,v_2)$. Let $\Xi$ be the set of quadruples $(c_1, c_2, c_3, c_4) \in C^4$ such that
\[1 < \frac{\Abs{v_2 - c_3}}{\Abs{v_2 - c_4}} < \frac{\Abs{v_1 - c_1}}{\Abs{v_1 - c_2}}.\]
For $i\in\set{1,2}$, each element of $\Delta(v_i)$ is associated with a unique ordered pair of elements in $C$ since $\abs{\Delta(v_i)}=\binom{\abs{C}}{2}$. So $\abs{\Xi}=N$.  \cref{lem:theta} tells us that for $\xi=(c_1,c_2,c_3,c_4) \in \Xi$, there is a unique $x_{\xi} \in \RR$ such that
\[\frac{\vartheta_{\Abs{v_1 - c_1}}(x_{\xi})}{\vartheta_{\Abs{v_1 - c_2}}(x_{\xi})} = \frac{\Abs{v_2 - c_3}^2}{\Abs{v_2 - c_4}^2}.\]
Moreover, since $(v_1,v_2)$ is good, these $x_{\xi}$'s are all distinct. Now order the elements of $\Xi$ as $\xi^{(1)}, \ldots, \xi^{(N)}$ so that
\[x_{\xi^{(1)}}<\cdots<x_{\xi^{(N)}}.\]
Write $\xi^{(j)}=(c_1^{(j)}, c_2^{(j)},c_3^{(j)}, c_4^{(j)})$ and $x_j=x_{\xi^{(j)}}$.  Choose arbitrary real numbers $x'_1, \ldots, x'_N$ such that
$x_1 < x'_1 < x_2 < x'_2 < \cdots < x_N < x'_N$.
For each $j$, we have
\[\frac{\vartheta_{\Abs{v_1 - c^{(j)}_1}}(x'_j)}{\vartheta_{\Abs{v_1 - c^{(j)}_2}}(x'_j)} > \frac{\vartheta_{\Abs{v_1 - c^{(j)}_1}}(x_j)}{\vartheta_{\Abs{v_1 - c^{(j)}_2}}(x_j)} = \frac{\Abs{v_2 - c^{(j)}_3}^2}{\Abs{v_2 - c^{(j)}_4}^2}\]
by construction, so we can find some $y_j > 0$ such that
\[\vartheta_{\Abs{v_1 - c^{(j)}_2}}(x'_j) < y_j\Abs{v_2 - c^{(j)}_4}^2 < y_j\Abs{v_2 - c^{(j)}_3}^2 < \vartheta_{\Abs{v_1 - c^{(j)}_1}}(x'_j).\]

Let $V_j = (v_1,v_2,x'_j, y_j)$.  Note that $\check D^1_{V_j}(c) = \vartheta_{\Abs{v_1 - c}}(x'_j)$ and $\check D^2_{V_j}(c) = y_j \Abs{v_2 - c}^2$. So the previous string of inequalities reads
\[\check D^1_{V_j}(c^{(j)}_2) < \check D^2_{V_j}(c^{(j)}_4) < \check D^2_{V_j}(c^{(j)}_3)< \check D^1_{V_j}(c^{(j)}_1).\]
We claim that the $N$ functions $\check \Sigma_{V_j}^{C,C}$ are all distinct.  Indeed, for $j' < j$, we cannot have
\[\check D^1_{V_{j'}}(c^{(j)}_2) < \check D^2_{V_{j'}}(c^{(j)}_4) < \check D^2_{V_{j'}}(c^{(j)}_3)< \check D^1_{V_{j'}}(c^{(j)}_1)\]
since this would imply that
\[
\frac{\vartheta_{\Abs{v_1 - c^{(j)}_1}}(x'_{j'})}{\vartheta_{\Abs{v_1 - c^{(j)}_2}}(x'_{j'})} >  \frac{\Abs{v_2 - c^{(j)}_3}^2}{\Abs{v_2 - c^{(j)}_4}^2},\]
contrary to the fact that $x'_{j'} < x_j$. Thus $\check \Sigma^{C,C}_{V_{j'}} \neq \check \Sigma^{C,C}_{V_j}$ for all $j' < j$, completing the proof. 
\end{proof}

\begin{proof}[Proof of \cref{lem:psicc}]
For each $\sigma \in \Psi_1(C)$, let $X_\sigma=\set{v\in\RR^{d-1}:\Sigma_v^C = \sigma}$; this is an open and nonempty subset of $\RR^{d-1}$. Let $U$ be the set of tuples $(v_\tau)_{\tau \in \Psi_1(C)} \in (\RR^{d-1})^{\Psi_1(C)}$ such that $(v_\sigma, v_{\sigma'})$ is good for all distinct $\sigma, \sigma' \in \Psi_1(C)$.  Notice that $U$ is dense and open.  Since the set $\prod_{\sigma \in \Psi_1(C)} X_\sigma\subseteq (\RR^{d-1})^{\Psi_1(C)}$ is open, the set $U \cap \prod_{\sigma \in \Psi_1(C)} X_\sigma$ is nonempty. Thus, we can find a tuple $(v_\sigma)_{\sigma \in \Psi_1(C)}$ such that $v_\sigma \in X_\sigma$ for all $\sigma$ and $(v_\sigma, v_{\sigma'})$ is good for all $\sigma \neq \sigma'$.

Fix some $\sigma \neq \sigma'$, and consider the functions $\check \Sigma_{(v_\sigma, v_{\sigma'}, x, y)}^{C,C}$ for $x \in \RR$ and $y \in \RR_{>0}$. Each such function ``extends'' $\sigma$ and $\sigma'$, in the sense that $\check D^1_{(v_\sigma, v_{\sigma'}, x, y)} \circ \sigma$ and $\check D^2_{(v_\sigma, v_{\sigma'}, x, y)} \circ \sigma'$
are both increasing functions. In particular, $\sigma$ and $\sigma'$ can both be recovered from $\check \Sigma_{(v_\sigma, v_{\sigma'}, x, y)}^{C,C}$, so
    \[
        \check \psi(C, C) \geq \sum_{\sigma \neq \sigma'} \Gamma(v_\sigma, v_{\sigma'}) 
        = \frac{1}{2} \sum_{\sigma \neq \sigma'} (\Gamma(v_\sigma, v_{\sigma'}) + \Gamma(v_{\sigma'}, v_\sigma)) 
        = \frac{1}{2} \sum_{\sigma \neq \sigma'} \binom{\abs{C}}{2}^2 
        = \binom{\abs{C}}{2}^2 \binom{\psi_1(C)}{2},
    \]
    as desired.
\end{proof}

\section{Unlimited vantage points}\label{sec:unlimited}

In this section, we study the set $\Psi(C)=\bigcup_{k\geq 1}\Psi_k(C)$ of orderings of a set $C=\set{c_1,\ldots,c_n}\subseteq\RR^d$ of candidate points that can be obtained using an arbitrarily large (finite) multiset of vantage points. That is, $\Psi(C)$ is the set of all tuples $\Sigma_V^C$ that can be obtained by choosing a finite multiset $V$ of vantage points in $\RR^d$ that distinguishes the points of $C$. 

We first observe that the triangle inequality places a constraint on the tuples in $\Psi(C)$. Let us say a tuple $(x_1,\ldots,x_n)$ of points in $\RR^d$ is \dfn{protrusive} if for every $i\in[n-1]$, the point $x_{i+1}$ is not in the convex hull of $x_1,\ldots,x_i$ (so the convex hull of $x_1,\ldots,x_{i+1}$ ``protrudes'' out of the convex hull of $x_1,\ldots,x_i$). 

\begin{proposition}\label{prop:convexity}
Let $C=\set{c_1,\ldots,c_n}\subseteq\RR^d$. Every tuple in $\Psi(C)$ is protrusive.  
\end{proposition}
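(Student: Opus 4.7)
The proof will be a short direct argument via the triangle inequality, so the plan is just to lay out the standard convexity-of-the-norm calculation cleanly. I expect no real obstacle here; the only subtlety is making sure the strict inequality survives.

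The strategy is to assume for contradiction that some ordering $\Sigma_V^C = (c_{\sigma(1)}, \ldots, c_{\sigma(n)})$ witnessed by a multiset $V$ fails to be protrusive at index $i$, so that $c_{\sigma(i+1)}$ lies in the convex hull of $c_{\sigma(1)}, \ldots, c_{\sigma(i)}$. Then I would fix a convex combination $c_{\sigma(i+1)} = \sum_{j=1}^{i} \lambda_j c_{\sigma(j)}$ with $\lambda_j \geq 0$ and $\sum_j \lambda_j = 1$, and aim to derive a strict upper bound on $D_V(c_{\sigma(i+1)})$ that contradicts its own definition.

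The key step is that for each vantage point $v \in V$, convexity of the Euclidean norm (equivalently, the triangle inequality applied to $\sum_j \lambda_j(c_{\sigma(j)} - v)$) gives
\[
\Abs{c_{\sigma(i+1)} - v} \leq \sum_{j=1}^{i} \lambda_j \Abs{c_{\sigma(j)} - v}.
\]
Summing over $v \in V$ and using $\sum_j \lambda_j = 1$ yields
\[
D_V(c_{\sigma(i+1)}) \leq \sum_{j=1}^{i} \lambda_j D_V(c_{\sigma(j)}) < \sum_{j=1}^{i} \lambda_j D_V(c_{\sigma(i+1)}) = D_V(c_{\sigma(i+1)}),
\]
which is a contradiction. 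The strictness of the middle inequality uses that $V$ distinguishes $C$ (so all the values $D_V(c_{\sigma(j)})$ for $j \leq i$ are strictly less than $D_V(c_{\sigma(i+1)})$), together with the fact that the $\lambda_j$ are nonnegative and sum to $1$, so at least one of them is positive. This completes the proof; no further lemmas or tools are needed.
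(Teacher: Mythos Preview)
Your proof is correct and follows essentially the same route as the paper's: write $c_{\sigma(i+1)}$ as a convex combination of the earlier points, apply the triangle inequality to each $\Abs{c_{\sigma(i+1)}-v}$, sum over $v\in V$, and obtain a contradiction with $D_V(c_{\sigma(1)})<\cdots<D_V(c_{\sigma(i+1)})$. If anything, you are slightly more explicit than the paper about why the final inequality is strict.
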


\begin{proof}
Let $V$ be a finite multiset of vantage points in $\RR^d$ that distinguishes the points in $C$, and let $\Sigma_V^C=(c_{\sigma(1)},\ldots,c_{\sigma(n)})$ be the witnessed ordering of $C$. Fix $i\in[n-1]$. We need to show that $c_{\sigma(i+1)}$ is not in the convex hull of $c_{\sigma(1)},\ldots,c_{\sigma(i)}$. Suppose otherwise. Then there exist nonnegative real numbers $\lambda_1,\ldots,\lambda_i$ such that $\sum_{\ell=1}^i\lambda_\ell=1$ and $c_{\sigma(i+1)}=\sum_{\ell=1}^i\lambda_\ell c_{\sigma(\ell)}$. We have \[D_V(c_{\sigma(i+1)})=\sum_{v\in V}\Abs{c_{\sigma(i+1)}-v}=\sum_{v\in V}\Abs*{\sum_{\ell=1}^i\lambda_\ell(c_{\sigma(\ell)}-v)}\leq\sum_{v\in V}\sum_{\ell=1}^i\lambda_\ell\Abs{c_{\sigma(\ell)}-v}=\sum_{\ell=1}^i\lambda_\ell D_V(c_{\sigma(\ell)}).\] This contradicts the fact that $D_V(c_{\sigma(1)})<\cdots<D_V(c_{\sigma(i)})<D_V(c_{\sigma(i+1)})$. 
\end{proof}

It is natural to ask if $\Psi(C)$ is exactly the set of protrusive orderings of $C$. The next theorem states that this is the case when $d=1$, but we will see later that it can fail to hold when $d=2$. 

\begin{theorem}\label{thm:protrusive_d_1}
If $C=\set{c_1,\ldots,c_n}\subseteq \RR$, then $\Psi(C)$ is the set of protrusive orderings of $C$. The number of such orderings is $2^{n-1}$. 
\end{theorem}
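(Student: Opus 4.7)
The plan is to establish both parts of the theorem via a counting argument together with an explicit construction.

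For the enumeration, note that in $\RR$ the convex hull of a finite set is an interval, so the protrusive condition forces each new element to lie strictly above the current maximum or strictly below the current minimum. In particular, the last element of any protrusive ordering of $C$ must equal $\min C$ or $\max C$, and removing it leaves a protrusive ordering of the remaining $n-1$ points. The count $P(n)$ therefore satisfies the recurrence $P(n) = 2\, P(n-1)$ with $P(1) = 1$, yielding $P(n) = 2^{n-1}$. The inclusion $\Psi(C) \subseteq \{\text{protrusive orderings of } C\}$ is immediate from \cref{prop:convexity}.

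For the reverse inclusion, assume $c_1 < c_2 < \cdots < c_n$ and fix a protrusive ordering $(c_{a_1}, \ldots, c_{a_n})$. Define $\tau\colon [n] \to [n]$ by $\tau(a_j) = j$; protrusiveness is equivalent to $\tau$ being V-shaped, meaning $\tau(i_0) = 1$ for $i_0 := a_1$ and $\tau$ is strictly monotone on each of $[1, i_0]$ and $[i_0, n]$. The plan has two steps: first I exhibit a strictly convex target sequence realizing this ordering, then I realize it as $D_V|_C$ for some multiset $V$. For the target, set $y_i := R^{\tau(i)}$ with $R$ a large parameter. To check strict convexity---that is, that the discrete slopes $m_j := (y_{j+1} - y_j)/(c_{j+1} - c_j)$ are strictly increasing in $j$---I carry out a short case analysis on the position of the triple $(j, j+1, j+2)$ relative to $i_0$: in each case (all three on one monotone branch of $\tau$, or the triple straddling $i_0$), the highest power of $R$ appearing in $m_{j+1} - m_j$ has a positive coefficient, so strict convexity holds once $R$ is large enough in terms of the gaps $c_{j+1} - c_j$.

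To realize the target as $D_V|_C$, I extend the slope sequence symmetrically by setting $m_0 := -A$ and $m_n := A$ for some $A > \max_j \abs{m_j}$, and then, for a large positive integer $N$, let $V$ have multiplicity $w_j := \lceil N(m_j - m_{j-1})/2 \rceil$ at $c_j$ for each $j \in [n]$. Then $D_V$ is piecewise linear and convex with slope jumps $2 w_j \approx N(m_j - m_{j-1})$ at each $c_j$ and asymptotic slopes $\pm \abs{V}$; the symmetric choice $m_0 = -m_n$ aligns $\abs{V} \approx NA$ with $N m_n = -N m_0$, so $D_V$ approximates $Ny$ on $C$ up to an additive constant, with no residual linear tilt. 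The main obstacle is controlling the integer-rounding error: at each breakpoint the slope jump of $D_V$ differs from that of $Ny$ by at most $2$, producing an accumulated discrepancy $\abs{D_V(c_i) - Ny_i - \text{const}} = O(n(c_n - c_1))$ that is bounded independently of $N$. Since the gaps between consecutive $y_i$ values are at least $R(R-1)$ and independent of $N$, taking $N$ sufficiently large ensures that the ordering of the values $D_V(c_i)$ coincides with that of the $y_i$, which is $(c_{a_1}, \ldots, c_{a_n})$ by construction.
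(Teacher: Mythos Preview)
Your argument is correct but follows a genuinely different route from the paper. The paper proves the reverse inclusion by a short induction on $n$: given a protrusive ordering with last entry (say) $c_n$, it takes a witnessing multiset $V'$ for the truncated ordering on $C\setminus\{c_n\}$ and appends $K$ copies each of $c_1$ and $c_{n-1}$; since $|x-c_1|+|x-c_{n-1}|$ is constant on $[c_1,c_{n-1}]$ but strictly larger at $c_n$, this pushes $c_n$ to the end without disturbing the rest. Your proof instead builds the whole witness at once: you recast protrusiveness as the inverse permutation $\tau$ being V-shaped, choose a strictly convex target $y_i=R^{\tau(i)}$, and then approximate $N y$ by a weighted sum of the tent functions $|x-c_j|$ with integer weights, using the symmetric padding $m_0=-m_n$ to kill the linear drift so that only a bounded rounding error remains. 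The paper's inductive proof is shorter and essentially one-line once the key observation is made; your direct construction is more elaborate but has the pleasant feature of producing an explicit multiset in a single pass, with all vantage points in $C$ (a phenomenon the paper exploits separately in \cref{lem:dist-matrix}). One small remark: your recurrence $P(n)=2P(n-1)$ implicitly uses that \emph{every} protrusive ordering of $C\setminus\{\min C\}$ (resp.\ $C\setminus\{\max C\}$) extends by appending $\min C$ (resp.\ $\max C$); this is immediate but worth stating, since the argument as written only verifies the forward direction.
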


\begin{proof}
We may assume without loss of generality that $c_1<\cdots<c_n$. It is straightforward to check that a tuple $(c_{\sigma(1)},\ldots,c_{\sigma(n)})$ is protrusive if and only if there do not exist indices $i_1<i_2<i_3$ such that $\sigma(i_3)$ is strictly between $\sigma(i_1)$ and $\sigma(i_2)$. (In other words, this tuple is protrusive if and only if $\sigma$ avoids the patterns $132$ and $312$.) It is well known (and straightforward to prove) that the number of such permutations is $2^{n-1}$.

\cref{prop:convexity} tells us that every tuple in $\Psi(C)$ is protrusive. To prove the reverse containment, we proceed by induction on $n$. This is trivial when $n=1$, so we may assume $n\geq 2$. Suppose $(c_{\sigma(1)},\ldots,c_{\sigma(n)})$ is protrusive; we will show that this tuple is in $\Psi(C)$. The entry $\sigma(n)$ is either $1$ or $n$. Let us assume $\sigma(n)=n$; a virtually identical argument handles the case where $\sigma(n)=1$. Let $C'=C\setminus\set{c_n}$. Then $(c_{\sigma(1)},\ldots,c_{\sigma(n-1)})$ is protrusive, so we can use induction to see that there exists a multiset $V'$ of points in $\RR$ such that $\Sigma_{V'}^{C'}=(c_{\sigma(1)},\ldots,c_{\sigma(n-1)})$. This means that $D_{V'}(c_{\sigma(1)})<\cdots<D_{V'}(c_{\sigma(n-1)})$. Let $K$ be a large positive integer. Let $V$ be the multiset obtained by adding $K$ copies of the point $c_1$ and $K$ copies of the point $c_{n-1}$ to $V'$. For $1\leq i\leq n-1$, we have \[D_V(c_i)=D_{V'}(c_i)+K(c_i-c_1)+K(c_{n-1}-c_i)=D_{V'}(c_i)+K(c_{n-1}-c_1).\] Thus, $D_V(c_{\sigma(1)})<\cdots<D_V(c_{\sigma(n-1)})$. We also have \[D_V(c_n)=D_{V'}(c_n)+K(c_n-c_1)+K(c_n-c_{n-1})=D_{V'}(c_n)+K(c_{n-1}-c_1)+2K(c_n-c_{n-1}),\] so $D_V(c_{\sigma(n-1)})<D_V(c_n)=D_V(c_{\sigma(n)})$ if $K$ is sufficiently large. This shows that \[(c_{\sigma(1)},\ldots,c_{\sigma(n)})=\Sigma_V^C\in\Psi(C).\qedhere\] 
\end{proof}

We now turn our attention to the Euclidean plane. We begin by showing that the analogue of \cref{thm:protrusive_d_1} is false when $d=2$.  In particular, we construct an explicit set $C \subseteq \RR^2$ of size $6$ such that $\Psi(C)$ does not contain all of the protrusive orderings of $C$.  The set $C$ will consist of the vertices of an equilateral triangle together with three points placed near the midpoints of its edges but just outside of the triangle; see \cref{fig:6points}.  In particular, this $C$ will be in convex position, so all orderings will be protrusive, but we will show that $\Psi(C)$ does not contain all orderings of $C$.

\begin{figure}[tbp]
\begin{center}{\includegraphics[height=3cm]{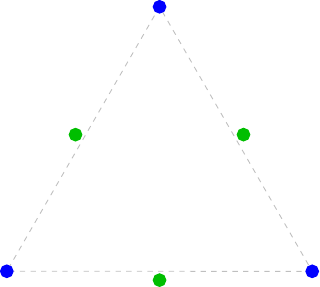}}
\end{center}
\caption{The set $C$ from \cref{prop:6-points}. The points $c_1,c_2,c_3$ are in blue, while $c_1',c_2',c_3'$ are in green. Note that the green points lie slightly outside of the triangle whose vertices are the blue points. }
\label{fig:6points}
\end{figure}

\begin{proposition}\label{prop:6-points}
Let $C \subseteq \RR^2$ consist of the six points
\begin{align*}
c_1&\coloneqq2e(\pi/2), &c_2&\coloneqq2e(7\pi/6),&c_3&\coloneqq2e(11\pi/6),\\
c'_1&\coloneqq-1.1e(\pi/2), &c'_2&\coloneqq-1.1e(7\pi/6), &c'_3&\coloneqq-1.1e(11\pi/6),
\end{align*}
where we have written $e(\theta)\coloneqq(\cos \theta, \sin \theta)$.
Then for any $v \in \RR$, we have
\[\Abs{v-c_1}+\Abs{v-c_2}+\Abs{v-c_3} \geq \Abs{v-c'_1}+\Abs{v-c'_2}+\Abs{v-c'_3}.\]
In particular, the ordering $(c_1, c_2, c_3, c'_1, c'_2, c'_3)$ is protrusive but is not in $\Psi(C)$.
\end{proposition}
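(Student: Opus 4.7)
The statement has two conclusions: the pointwise inequality and the claim that the ordering $(c_1, c_2, c_3, c'_1, c'_2, c'_3)$ is protrusive but not in $\Psi(C)$. That this ordering is protrusive is immediate since the six points form a convex hexagon. The ``not in $\Psi(C)$'' part then follows from the pointwise inequality: setting $f(v) \coloneqq \sum_i (\Abs{v - c_i} - \Abs{v - c'_i})$, summing $f(v) \geq 0$ over the elements of any distinguishing multiset $V$ yields $D_V(c_1) + D_V(c_2) + D_V(c_3) \geq D_V(c'_1) + D_V(c'_2) + D_V(c'_3)$, which is incompatible with the strict interleaving $D_V(c_i) < D_V(c'_j)$ required by the proposed ordering. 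So the plan reduces to proving the pointwise inequality $f \geq 0$ on $\RR^2$.

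My first step would be to exploit the dihedral $D_3$ symmetry of the configuration---$2\pi/3$-rotations about the origin (which permute the $c_i$'s and the $c'_i$'s in lockstep) and the reflection across the $y$-axis (which fixes $c_1, c'_1$ and swaps $c_2 \leftrightarrow c_3$ and $c'_2 \leftrightarrow c'_3$)---to restrict attention to a fundamental $60^\circ$ sector. My second step would handle the far regime via an asymptotic expansion of $\Abs{v - c}$ to order $\Abs{v}^{-1}$. Using $\sum_i c_i = \sum_i c'_i = 0$, $\sum_i \Abs{c_i}^2 = 12$, $\sum_i \Abs{c'_i}^2 = 3.63$, and the threefold-symmetry identities $\sum_i c_i c_i^\top = 6 I_2$ and $\sum_i c'_i (c'_i)^\top = 1.815\, I_2$, the leading $3\Abs{v}$ and $(v \cdot \sum c_i)/\Abs{v}$ terms cancel, yielding
\[f(v) = \frac{(12 - 6) - (3.63 - 1.815)}{2 \Abs{v}} + O(\Abs{v}^{-2}) = \frac{2.0925}{\Abs{v}} + O(\Abs{v}^{-2})\]
for $\Abs{v}$ large. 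Hence $\set{f \leq 0}$, if nonempty, lies in some compact ball.

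The third step is to verify $f > 0$ on that compact ball. Since $f$ is a difference of two convex functions, it is smooth away from the six cone points $\set{c_i, c'_i}$, so any local minimum of $f$ occurs either at a smooth critical point (where $\sum_i \widehat{v - c_i} = \sum_i \widehat{v - c'_i}$, with $\hat w \coloneqq w/\Abs{w}$) or at a cone point. At cone points, direct calculation gives $f(c_1) = 4\sqrt{3} - 3.1 - 2\sqrt{3.01} > 0$ and $f(c'_1) = 3.1 + 2\sqrt{3.01} - 2\sqrt{3.63} > 0$, and the $D_3$ symmetry handles the other four. On the $y$-axis, $f$ reduces to
\[f(0, t) = \abs{t - 2} + 2\sqrt{3 + (t+1)^2} - \abs{t + 1.1} - 2\sqrt{0.9075 + (t - 0.55)^2},\]
whose positivity I would check by analyzing the pieces on $(-\infty, -1.1)$, $[-1.1, 2]$, and $(2, \infty)$ (the minimum along the axis is at $t = 2$, recovering $f(c_1)$ above); analogous analyses handle the other two symmetry axes.

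The main obstacle is ruling out off-axis interior smooth critical points at which $f$ might be negative. Since $f$ is $6$-Lipschitz as a sum of six $1$-Lipschitz terms, one pragmatic approach is to combine the explicit lower bounds at the cone points and the asymptotic estimate with a grid computation over the remaining compact subregion of the fundamental sector. A cleaner alternative would exploit the $D_3$-invariance of the critical-point equation together with the unit-vector structure of each gradient term to argue that any nontrivial interior critical orbit must lie outside the regime where $f$ dips toward zero; this is the step where the real care, and likely additional computation, would be required.
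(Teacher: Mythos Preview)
Your overall two-step strategy---an asymptotic/variance argument for large $\Abs{v}$, followed by direct verification on the remaining compact region---is exactly the paper's approach. The differences are in execution, and in one place your version has a genuine gap.

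For the far regime, your Taylor expansion $f(v) = 2.0925/\Abs{v} + O(\Abs{v}^{-2})$ is correct, but it is not effective: you never extract an explicit radius beyond which $f>0$. The paper instead proves the two-sided bound $a + \frac{x}{2\sqrt{a^2+x}} \leq \sqrt{a^2+x} \leq a + \frac{x}{2a}$ and applies it after projecting onto the line through $v$ and the origin. Using the same symmetry identities you wrote down (namely $\sum c_i = \sum c'_i = 0$ and $\sum_i \Abs{c_i - \pi_L(c_i)}^2 = 6$, $\sum_i \Abs{c'_i - \pi_L(c'_i)}^2 = \tfrac{3}{2}\cdot 1.1^2$), this yields the explicit inequality
\[
f(v) \;\geq\; \frac{3}{4}\paren*{\frac{4}{\Abs{v}+2} - \frac{1.21}{\Abs{v}-1.1}},
\]
which is positive once $\Abs{v} > 2.5$. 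Without such an explicit radius, your compact region is not actually pinned down, and the grid step cannot begin.

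For the near regime, your cone-point evaluations and axis analysis are correct but not sufficient, as you yourself acknowledge: the ``cleaner alternative'' of ruling out off-axis smooth critical points via the unit-vector structure of $\nabla f$ is only gestured at, and no argument of that shape appears in the paper. The paper simply bites the bullet: it evaluates $f$ on the $251\times 251$ grid $\set{-2.5,-2.48,\ldots,2.5}^2$ with interval arithmetic, observes $f>0.35$ at every grid point, and then uses precisely the $6$-Lipschitz bound you mention (together with the mesh radius $0.02/\sqrt{2}$) to conclude $f > 0.35 - 6\cdot 0.02/\sqrt{2} > 0$ throughout $\Abs{v}\leq 2.5$. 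So the ``pragmatic approach'' you list is in fact the proof; the $D_3$ reduction and the cone-point/axis checks are not needed once you commit to it.
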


\begin{proof}
The proof proceeds in two steps: We first show by a symmetry/variance argument that the proposition holds if $v$ is far from the origin, and then we check numerically that the proposition holds if $v$ is close to the origin.  Both parts of the proof are (relatively unenlightening) computations.

To begin, we first observe that for all $a, x > 0$, we have $\frac{\sqrt{a^2+x}-a}{x} = \frac{1}{2\sqrt{a^2+y}}$ for some $0 < y < x$ (by the mean value theorem). Therefore, we conclude that for all $a > 0$ and $x \geq 0$, we have
\[a + \frac{x}{2\sqrt{a^2+x}} \leq \sqrt{a^2 + x} \leq a + \frac{x}{2a}.\]

Now, for some $v$ with $\Abs{v} > 2$, let $L$ be the line through $v$ and the origin, and let $\pi_L \colon \RR^2 \to L$ denote the orthogonal projection onto $L$. Then, for any $c$ with $\Abs{c} \leq 2$, we have
\[\Abs{v - c} = \sqrt{\Abs{v - \pi_L(c)}^2 + \Abs{c - \pi_L(c)}^2};\]
this quantity can be bounded between
\[\Abs{v - \pi_L(c)} + \frac{\Abs{c - \pi_L(c)}^2}{2\Abs{v - c}} \quad \text{and} \quad \Abs{v - \pi_L(c)} + \frac{\Abs{c - \pi_L(c)}^2}{2\Abs{v - \pi_L(c)}}.\]
It follows that
\begin{align*}
    \MoveEqLeft \sum_{i \in [3]} \paren*{\Abs{v - c_i} - \Abs{v - c'_i}} \\ &\geq \sum_{i \in [3]} \paren*{\Abs{v - \pi_L(c_i)} - \Abs{v - \pi_L(c'_i)} + \frac{\Abs{c_i - \pi_L(c_i)}^2}{2\Abs{v - c_i}} - \frac{\Abs{c'_i - \pi_L(c'_i)}^2}{2\Abs{v - \pi_L(c'_i)}}} \\
    &\geq \sum_{i \in [3]} \paren*{\Abs{v - \pi_L(c_i)} - \Abs{v - \pi_L(c'_i)}} + \frac{\sum_{i \in [3]}\Abs{c_i - \pi_L(c_i)}^2}{2(\Abs{v}+2)} - \frac{\sum_{i \in [3]}\Abs{c'_i - \pi_L(c'_i)}^2}{2(\Abs{v}-1.1)}.
\end{align*}
We now analyze these three terms separately. First of all, since $c_1 + c_2 + c_3 = 0 = c'_1 + c'_2 + c'_3$ and $\Abs{v} \geq 2$, we get
\[\sum_{i \in [3]}\Abs{v - \pi_L(c_i)} = \Abs*{\sum_{i \in [3]}(v - \pi_L(c_i))} = \Abs*{\sum_{i \in [3]}(v - \pi_L(c'_i))} = \sum_{i \in [3]}\Abs{v - \pi_L(c'_i)},\]
so the first term is zero. Moreover, letting $u$ be a unit vector perpendicular to $L$, we claim that $\sum_{i \in [3]}(c_i \cdot u) c_i = 6u$. This is easy to verify for $u = c_j$ for some $j$, and thus follows for general $u$ from linearity. Therefore,
\[6 = \sum_{i \in [3]}(c_i \cdot u) (c_i \cdot u) = \sum_{i \in [3]} \Abs{c_i - \pi_L(c_i)}^2.\]
Similarly, $\sum_{i \in [3]} \Abs{c'_i - \pi_L(c'_i)}^2 = \frac{3}{2}\cdot 1.1^2$. Therefore,
\[\sum_{i \in [3]} \paren*{\Abs{v - c_i} - \Abs{v - c'_i}} \geq \frac{3}{4} \paren*{\frac{2^2}{\Abs{v}+2} - \frac{1.1^2}{\Abs{v}-1.1}};\]
this quantity will be positive whenever
\[\frac{\Abs{v} + 2}{\Abs{v} - 1.1} < \frac{2^2}{1.1^2},\]
which occurs whenever $\Abs{v} > 2.5$.

We now need to show that the inequality is true whenever $\Abs{v} \leq 2.5$, which can be done by brute force. Specifically, letting $f(v) = \sum_{i \in [3]} (\Abs{v - c_i} - \Abs{v - c'_i})$, we use a computer program with interval arithmetic to verify that on the $251 \times 251$ grid $v \in \set{-2.5,-2.48,\ldots,2.48,2.5}^2$, we always have $f(v) > 0.35$. Since $f$ is $6$-Lipschitz and no $v$ with $\Abs{v}\leq 2.5$ lies farther than $0.02/\sqrt{2}$ from a point on the grid, we conclude that
\[f(v) > 0.35 - 6 \cdot \frac{0.02}{\sqrt{2}} > 0.26\]
for all $v$ with $\Abs{v} \leq 2.5$.
\end{proof}

It is still worth studying the sets $C$ with the property that $\Psi(C)$ is precisely the set of protrusive orderings of $C$. The following theorems provide two families of sets $C$ with this property. 

\begin{theorem}\label{thm:4-points}
If $C\subseteq\RR^d$ is a set of size $n\leq 4$, then $\Psi(C)$ is the set of protrusive orderings of $C$.
\end{theorem}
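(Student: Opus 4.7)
The containment $\Psi(C) \subseteq \{\text{protrusive orderings}\}$ is already given by \cref{prop:convexity}, so the task is to prove the reverse inclusion by induction on $n = \abs{C}$. The cases $n = 1, 2$ are immediate (for $n = 2$, take $V = \set{c_{\sigma(1)}}$).

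For $n = 3$, given a protrusive $(c_{\sigma(1)}, c_{\sigma(2)}, c_{\sigma(3)})$, I take $V = (K+1)\set{c_{\sigma(1)}} \sqcup K\set{c_{\sigma(2)}}$ for a large integer $K$. The inequality $D_V(c_{\sigma(1)}) < D_V(c_{\sigma(2)})$ reduces immediately to $c_{\sigma(1)} \neq c_{\sigma(2)}$. The inequality $D_V(c_{\sigma(2)}) < D_V(c_{\sigma(3)})$ rearranges to $(K+1)(d_{12} - d_{13}) < K d_{23}$, where $d_{ij} = \Abs{c_{\sigma(i)} - c_{\sigma(j)}}$. Since protrusiveness at step three forces $c_{\sigma(3)} \notin [c_{\sigma(1)}, c_{\sigma(2)}]$, the strict triangle inequality $d_{12} < d_{13} + d_{23}$ holds, which implies the above inequality for $K$ sufficiently large.

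For $n = 4$, given a protrusive tuple $(c_{\sigma(1)}, \ldots, c_{\sigma(4)})$, the inductive hypothesis produces a multiset $V_3$ witnessing $(c_{\sigma(1)}, c_{\sigma(2)}, c_{\sigma(3)})$. If $D_{V_3}(c_{\sigma(4)}) > D_{V_3}(c_{\sigma(3)})$, then $V_3$ already works; otherwise I augment it with extra vantage points, the choice depending on the affine configuration of $C$. If $C$ is collinear, \cref{thm:protrusive_d_1} applies. If $C$ is affinely independent (so $d \geq 3$), I place $p^*$ on the line through the circumcenter of $\set{c_{\sigma(1)}, c_{\sigma(2)}, c_{\sigma(3)}}$ perpendicular to their spanning plane, far on the side opposite $c_{\sigma(4)}$; this $p^*$ is exactly equidistant from the first three candidates while $\Abs{p^* - c_{\sigma(4)}}$ exceeds that common value by a fixed positive amount (the height of $c_{\sigma(4)}$ above the plane), so $V_3 \sqcup M\set{p^*}$ works for $M$ large. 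If $C$ is coplanar but not collinear and $c_{\sigma(4)}$ lies outside the circumcircle of the first three, I use the circumcenter itself as $p^*$.

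The main obstacle is the final case: $C$ coplanar and not collinear, with $c_{\sigma(4)}$ inside the circumcircle of $\set{c_{\sigma(1)}, c_{\sigma(2)}, c_{\sigma(3)}}$ (e.g., cyclic quadrilaterals, or $c_{\sigma(4)}$ just outside one edge of the triangle $c_{\sigma(1)} c_{\sigma(2)} c_{\sigma(3)}$). Here no single point of the plane is simultaneously equidistant from the first three candidates and further from $c_{\sigma(4)}$. I will instead construct $V$ directly in the form $V = \alpha\set{c_{\sigma(i)}} \sqcup \beta\set{c_{\sigma(j)}} \sqcup M\set{p^*}$, where the pair $(i,j) \subseteq \set{1,2,3}$ is chosen from the geometry and $p^* = Ru$ lies far along a direction $u$ with $u \cdot c_{\sigma(4)} < u \cdot c_{\sigma(\ell)}$ for $\ell \leq 3$ (such a $u$ exists by protrusiveness, which yields a separating hyperplane between $c_{\sigma(4)}$ and $\conv\set{c_{\sigma(1)}, c_{\sigma(2)}, c_{\sigma(3)}}$). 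Taking $R \to \infty$, the contribution of each copy of $p^*$ is asymptotically linear in $c \cdot u$, so the three required inequalities $D_V(c_{\sigma(\ell)}) < D_V(c_{\sigma(\ell+1)})$ collapse to a linear feasibility system in the positive integers $(\alpha, \beta, M)$. I plan to verify by explicit case analysis that for a suitable choice of $(i,j,u)$ this system has a positive integer solution, using the strict triangle inequality to supply positive gaps between the first three contributions and the positive width $\min_{\ell \leq 3} u \cdot (c_{\sigma(\ell)} - c_{\sigma(4)}) > 0$ to guarantee the last inequality.
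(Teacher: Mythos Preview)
Your proposal has a genuine gap in the ``main obstacle'' case. You reduce the hard planar sub-case to a linear feasibility problem in $(\alpha,\beta,M)$ but only \emph{plan to verify} feasibility; this is exactly the heart of the matter and you have not carried it out. The difficulty is real: any separating direction $u$ that pushes $c_{\sigma(4)}$ last also imposes, via the term $-M(c\cdot u)$, a linear ordering on $c_{\sigma(1)},c_{\sigma(2)},c_{\sigma(3)}$ that will typically conflict with the one you want, and it is not clear that positive weights at only two of the three vertices always suffice to repair this. Your heuristic (``the strict triangle inequality to supply positive gaps'') is not a proof; at best you are facing a case split on which edge of the triangle $c_{\sigma(4)}$ lies beyond, with a delicate balancing of $\alpha,\beta,M$ in each case, and you have not shown the resulting windows are nonempty in general. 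There is also a missing sub-case in your case analysis: when $c_{\sigma(1)},c_{\sigma(2)},c_{\sigma(3)}$ are collinear but $C$ is not, there is no circumcircle, so neither your ``outside the circumcircle'' branch nor your ``inside the circumcircle'' branch applies.

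The paper avoids all of this with a single two-point device. In the planar case it finds points $p_1,p_2$ such that $\Abs{c-p_1}+\Abs{c-p_2}$ is constant on $\set{c_{\sigma(1)},c_{\sigma(2)},c_{\sigma(3)}}$ but strictly larger at $c_{\sigma(4)}$; adding $K$ copies of each $p_i$ to the inductively obtained $V'$ then works for $K$ large. If the first three points are collinear, $p_1,p_2$ are chosen so that the segment $[p_1,p_2]$ contains them. If not, one picks a point $z$ in the interior of the convex hull of $C$ with $\set{c_{\sigma(1)},c_{\sigma(2)},c_{\sigma(3)},z}$ in convex position, passes an ellipse $E$ through these four points, and takes $p_1,p_2$ to be the foci of $E$; since $z$ lies on $E$ and in the interior of the convex hull of $C$, the point $c_{\sigma(4)}$ is forced strictly outside $E$. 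This single construction handles your hard case and your missing collinear sub-case in one stroke, with no feasibility analysis.
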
 

\begin{proof}
If the affine span of $C$ is $(n-1)$-dimensional, it is a simple exercise to show that every ordering of $C$ is in $\Psi_1(C)$.  Indeed, in this case, for each point $c \in C$, there is a sphere $S$ such that $c$ lies outside of $S$ and all of the other points of $C$ lie on $S$; adding many copies of the center of $S$ as vantage points does not change the relative order of $D_V$ on $C \setminus \set{c}$, but it increases the relative value of $D_V(c)$.  Henceforth we will assume the affine span of $C$ has dimension at most $n-2$. We may also assume that the points in $C$ are not collinear since, otherwise, the desired result follows from \cref{thm:protrusive_d_1}. So we are left to consider the case where $n=4$ and the affine span of $C$ is $2$-dimensional. Let $(c_1,c_2,c_3,c_4)$ be a protrusive ordering of $C$; we will show that this ordering is in $\Psi(C)$. It suffices to prove this when $d=2$. 

Let $C'=\set{c_1,c_2,c_3}$. We already know that the desired result holds for sets of $3$ candidate points, so there exists a multiset $V'$ of vantage points in $\RR^2$ such that $\Sigma_{V'}^{C'}=(c_1,c_2,c_3)$. We are going to find two points $p_1$ and $p_2$ such that \[\Abs{c_1-p_1}+\Abs{c_1-p_2}=\Abs{c_2-p_1}+\Abs{c_2-p_2}=\Abs{c_3-p_1}+\Abs{c_3-p_2}<\Abs{c_4-p_1}+\Abs{c_4-p_2}.\] If we can find such points, then we can form a new multiset $V$ by adding some $K$ copies of each of $p_1$ and $p_2$ to $V'$. If $K$ is sufficiently large, then our choice of $p_1$ and $p_2$ will ensure that $\Sigma_V^C=(c_1,c_2,c_3,c_4)$. 

If the points in $C'$ are collinear, then we can simply choose $p_1$ and $p_2$ to be two points such that the line segment with endpoints $p_1$ and $p_2$ contains $C'$. Now suppose the points in $C'$ are not collinear. We can find a point $z$ in the interior of the convex hull of $C$ such that $C' \cup \set{z}$ is in convex position. Since $c_1,c_2,c_3,z$ form the vertices of a convex quadrilateral, a well-known result \cite{Munn} implies that there is an ellipse $E$ passing through $c_1,c_2,c_3,z$. The point $c_4$ cannot lie in or on $E$ since that would imply that $z$, which is in the interior of the convex hull of $C$, is in the interior of $E$. Now, we can take $p_1$ and $p_2$ to be the foci of $E$. 
\end{proof}

In light of \cref{prop:6-points} and \cref{thm:4-points}, we are led to the following natural question. 

\begin{question}
Does there exist a set $C$ of $5$ points such that some protrusive ordering of $C$ is not in $\Psi(C)$?
\end{question}

We now formulate a sufficient condition on $C$ for $\Psi(C)$ to consist of all $\abs{C}!$ orderings of $C$.  If $C=\set{c_1, \ldots, c_n} \subseteq \RR^d$, then its \dfn{{distance matrix}} is defined to be the $n \times n$ matrix ${\mathbf M}_C$ whose $ij$-entry is $\Abs{c_i-c_j}$.  It is well known \cite{Schoenberg1937} (see also \cite{Ball1992}) that ${\mathbf M}_C$ is invertible for every choice of $C$.  Let $\mathbf{1}$ denote the all-$1$'s vector of length $n$.

\begin{lemma}\label{lem:dist-matrix}
Let $C=\set{c_1, \ldots, c_n} \subseteq \RR^d$.  If the vector $\nu\coloneqq{\mathbf M}_C^{-1}{\mathbf 1}$ has all entries strictly positive, then $\Psi(C)$ is the set of all $n!$ orderings of $C$, and every ordering is witnessed by a multiset of vantage points such that every vantage point is in $C$.
\end{lemma}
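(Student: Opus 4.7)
The plan is to exploit the crucial observation that if the multiset $V$ consists of points of $C$ with integer multiplicities $w_i$ assigned to $c_i$, then
\[
D_V(c_j) = \sum_{i=1}^{n} w_i \Abs{c_i - c_j} = (\mathbf{M}_C w)_j.
\]
So, fixing an arbitrary permutation $\sigma$ of $[n]$, it suffices to produce $w \in \mathbb{Z}_{\geq 0}^n$ such that the vector $\mathbf{M}_C w$ has its components in the order $\sigma(1) < \sigma(2) < \cdots < \sigma(n)$. If such a $w$ exists, then the associated multiset $V$ (with $w_i$ copies of $c_i$) witnesses the ordering $(c_{\sigma(1)}, \ldots, c_{\sigma(n)})$, and this ordering lies in $\Psi(C)$.

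To construct such $w$, I would first ignore integrality. Choose any target vector $\tau^* \in \mathbb{R}^n$ with $\tau^*_{\sigma(1)} < \cdots < \tau^*_{\sigma(n)}$, and set $w^* = \mathbf{M}_C^{-1} \tau^*$; this is well defined because $\mathbf{M}_C$ is invertible. The entries of $w^*$ may be negative, but observe that for any real $t$,
\[
\mathbf{M}_C(w^* + t\nu) = \tau^* + t \mathbf{1},
\]
which has the same ordering of components as $\tau^*$. Since $\nu$ has all positive entries by hypothesis, for all sufficiently large $t$ the vector $w^* + t\nu$ lies in the open positive orthant $\mathbb{R}_{>0}^n$. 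Fix such a $t$, and write $w^\circ = w^* + t\nu > 0$.

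Now handle integrality by scaling. For a large positive integer $N$, let $w$ be the vector obtained by rounding each entry of $N w^\circ$ to the nearest nonnegative integer. Then $\Abs{w - N w^\circ}_\infty \leq 1/2$, so $\Abs{\mathbf{M}_C w - N (\tau^* + t\mathbf{1})}_\infty$ is bounded by a constant depending only on $C$, while the minimum gap between consecutive components of $N(\tau^* + t\mathbf{1})$ in the $\sigma$-order grows linearly in $N$. Hence for $N$ large enough, $\mathbf{M}_C w$ inherits the ordering $\sigma$, and moreover all entries of $w$ are strictly positive (since $\min_i w^\circ_i > 0$). Taking $V$ to be the multiset with $w_i$ copies of $c_i$ produces the desired witnessing multiset, showing simultaneously that every ordering of $C$ lies in $\Psi(C)$ and that it can be witnessed using vantage points drawn from $C$ itself.

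There is no serious obstacle here beyond organizing these routine inequalities: the invertibility of $\mathbf{M}_C$ is standard, and the positivity hypothesis on $\nu$ is exactly what is needed to translate an arbitrary real solution into the positive orthant without disturbing the ordering. The one point that must be stated carefully is that the ordering-preserving cone of targets $\tau$ is open, so small perturbations (from rounding) do not change the ordering once $N$ is taken large enough.
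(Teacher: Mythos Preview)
Your proof is correct and follows essentially the same approach as the paper: both arguments reduce to finding a nonnegative integer vector $w$ with $\mathbf{M}_C w$ in the prescribed order, obtain a positive real solution by translating along $\nu$, and then pass to integers by a rounding argument. The only cosmetic difference is that the paper rescales $C$ so that the entries of $\mathbf{M}_C$ are small (making the rounding error small relative to fixed integer gaps), whereas you rescale the real solution by a large $N$ (making the gaps large relative to a fixed rounding error); these are dual versions of the same trick.
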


\begin{proof}
Assume without loss of generality that the diameter of the set $C$ is at most $1/(10n)$.  Let $c_{\sigma(1)}, \ldots, c_{\sigma(n)}$ be an ordering of $C$, and define the vector $\mu \in \RR^n$ coordinate-wise by $\mu_i\coloneqq\sigma^{-1}(i)$.  Then there is some constant $K>0$ such that the vector $\rho\coloneqq K\nu+{\mathbf M}_C^{-1}\mu$ has all entries strictly positive.  Notice that ${\mathbf M}_C \rho=K{\mathbf 1}+\mu$.  Obtain $\rho'$ from $\rho$ by rounding each coordinate down to the nearest integer.  The key observation is that since each entry of $\mathbf{M}_C$ is at most $1/(10n)$, each coordinate of ${\mathbf M}_C \rho'$ differs by at most $1/10$ from the corresponding coordinate of $\mathbf{M}_C \rho$.  In particular, the coordinates of ${\mathbf M}_C \rho'$, when ordered from smallest to largest, have the ordering $c_{\sigma(1)}, \ldots, c_{\sigma(n)}$.  Hence, if $V$ is the multiset consisting of $\rho'_i$ copies of each $c_i$, then $\Sigma_V^C=(c_{\sigma(1)}, \ldots, c_{\sigma(n)})$.
\end{proof}

It is straightforward to check that the hypothesis of \Cref{lem:dist-matrix} holds for many particular sets $C$.  One family of examples comes from taking $C$ to be the set of vertices of a vertex-transitive polytope; in this case, the vector $\nu={\mathbf M}_C^{-1}{\mathbf 1}$ is in fact constant.

\begin{theorem}\label{thm:polytope}
If $C$ is the set of vertices of a vertex-transitive polytope $P \subseteq \RR^d$, then $\Psi(C)$ is the set of all $\abs{C}!$ orderings of $C$.
\end{theorem}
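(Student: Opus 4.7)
The plan is to apply \cref{lem:dist-matrix}: it suffices to show that the vector $\nu \coloneqq \mathbf{M}_C^{-1}\mathbf{1}$ has all strictly positive entries. In fact, I claim the stronger statement (which the excerpt previews) that $\nu$ is a positive multiple of $\mathbf{1}$, and this will drop out of the vertex-transitivity hypothesis by a clean averaging argument. The case $\abs{C}=1$ is trivial, so assume $n\coloneqq\abs{C}\geq 2$.

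First, I would translate vertex-transitivity into a statement about matrices. Let $G$ be the group of isometries of $\RR^d$ that preserve $P$. By hypothesis $G$ acts transitively on $C=\set{c_1,\ldots,c_n}$. For each $g\in G$, let $\pi_g\in S_n$ be the permutation defined by $g(c_i)=c_{\pi_g(i)}$, and let $P_g$ be the corresponding $n\times n$ permutation matrix. Since $g$ is an isometry, $\Abs{c_i-c_j}=\Abs{g(c_i)-g(c_j)}=\Abs{c_{\pi_g(i)}-c_{\pi_g(j)}}$, so $P_g\mathbf{M}_C P_g^{-1}=\mathbf{M}_C$, which implies $P_g \mathbf{M}_C^{-1} = \mathbf{M}_C^{-1} P_g$.

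Next, since $P_g\mathbf{1}=\mathbf{1}$, we compute
\[P_g\nu = P_g\mathbf{M}_C^{-1}\mathbf{1} = \mathbf{M}_C^{-1}P_g\mathbf{1} = \mathbf{M}_C^{-1}\mathbf{1} = \nu.\]
Thus $\nu$ is invariant under all the permutations $\pi_g$ with $g\in G$. By vertex-transitivity, the action of $\set{\pi_g:g\in G}$ on $[n]$ is transitive, so all coordinates of $\nu$ are equal, i.e., $\nu=\lambda\mathbf{1}$ for some $\lambda\in\RR$. Applying $\mathbf{M}_C$ yields $\lambda\mathbf{M}_C\mathbf{1}=\mathbf{1}$; since the $i$th coordinate of $\mathbf{M}_C\mathbf{1}$ is $\sum_{j}\Abs{c_i-c_j}>0$, we get $\lambda>0$ and hence $\nu$ has all strictly positive entries. \cref{lem:dist-matrix} then gives that $\Psi(C)$ is the set of all $n!$ orderings.

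There is no real obstacle here: the proof is a one-line averaging argument once one notices that the permutation matrices arising from the symmetry group commute with $\mathbf{M}_C$ and fix $\mathbf{1}$. The only mild subtlety is to record that $\lambda>0$ (rather than merely $\lambda\neq 0$), which comes for free from the positivity of the off-diagonal entries of $\mathbf{M}_C$.
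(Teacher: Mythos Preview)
Your proof is correct and follows essentially the same approach as the paper: both verify the hypothesis of \cref{lem:dist-matrix} by showing that $\nu=\mathbf{M}_C^{-1}\mathbf{1}$ is a positive scalar multiple of $\mathbf{1}$. The paper does this slightly more directly by observing at the outset that vertex-transitivity makes the row sums $\sum_j\Abs{c_i-c_j}$ constant, i.e., $\mathbf{M}_C\mathbf{1}=\gamma\mathbf{1}$, whence $\nu=\gamma^{-1}\mathbf{1}$; your commutation argument $P_g\nu=\nu$ arrives at the same conclusion from the other side.
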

\begin{proof}
Let $C=\set{c_1, \ldots, c_n}$.  The vertex-transitivity of $P$ ensures that the quantity $\sum_{j=1}^n \Abs{c_i-c_j}$ equals some constant $\gamma$ independent of $i$.  In particular, ${\mathbf M}_C \mathbf{1}=\gamma{\mathbf 1}$, so ${\mathbf M}_C^{-1}{\mathbf 1}=\gamma^{-1}{\mathbf 1}$ has all coordinates strictly positive.  The theorem now follows from \cref{lem:dist-matrix}.
\end{proof}

\section*{Acknowledgements}
Noga Alon was supported in part by NSF grant DMS-2154082. Colin Defant was supported by the National Science Foundation under Award No.\ 2201907 and by a Benjamin Peirce Fellowship at Harvard University. He thanks Rimma Hamaleinen for initially directing his attention to the article \cite{Carbonero2021}.  Noah Kravitz was supported in part by an NSF Graduate Research Fellowship (grant DGE-2039656).

\printbibliography
\end{document}